\newtheorem{thm}{Theorem}
\theoremstyle{remark}
\newtheorem{rem}[thm]{Remark}
\numberwithin{equation}{section}
\def\Im {\mathop{\rm Im}\nolimits}
\def\arg {\mathop{\rm arg}\nolimits}
\def\Re {\mathop{\rm Re}\nolimits}
\def\Tr {\mathop{\rm Tr}\nolimits}
\numberwithin{equation}{section}
\begin{document}

\title{ Isomonodromy sets of accessory parameters
for  Heun class equations}

\author{Jun Xia\footnotemark [1], ~Shuai-Xia Xu\footnotemark [2] ~and Yu-Qiu Zhao\footnotemark [1]}

\renewcommand{\thefootnote}{\fnsymbol{footnote}}
\footnotetext [1]  { Department of Mathematics, Sun Yat-sen University, GuangZhou 510275, China.}
\footnotetext [2] {Institut Franco-Chinois de l'Energie Nucl\'{e}aire, Sun Yat-sen University,
GuangZhou 510275, China.}

\date{}
\maketitle

\begin{abstract}
In this paper, we consider the monodromy 
and, in particularly,   the isomonodromy  sets of accessory parameters for the Heun class  equations.  We  show that  the Heun class  equations can be obtained as  limits of the linear systems associated with the Painlev\'{e} equations when the Painlev\'e transcendents go  to one of the actual singular points of the linear systems.
While the isomonodromy sets of  accessory parameters for the Heun class equations are described by
the Taylor or Laurent  coefficients
of  the corresponding  Painlev\'{e} functions, or the associated tau functions, at the positions of the critical values. As an application of these results, we derive some asymptotic approximations for the isomonodromy sets of accessory parameters in the Heun class  equations, including the confluent Heun equation, the  doubly-confluent Heun equation and the reduced biconfluent Heun equation. \\
\newline
  \textbf{2010 mathematics subject classification:} 33E17; 34A30; 34E05; 34M55; 41A60
\newline
  \textbf{Keywords and phrases:}  Heun class equations, Painlev\'{e} equations, accessory parameter, monodromy, isomonodromy deformation, asymptotic analysis.
\end{abstract}
\tableofcontents

\noindent
\section{Introduction and statement of results}
The Heun equation  (HE)  is the general second-order linear ODE having four regular singular points, with
the canonical form 
\cite[Eq. (31.2.1)]{OL}
\begin{equation}\label{int:HE}
\frac{\mathrm{d}^{2}w}{\mathrm{d}z^{2}}+\left(\frac{\gamma}{z}+\frac{\delta}{z-1}+\frac{\epsilon}{z-a}\right)\frac{\mathrm{d}w}{\mathrm{d}z}+\frac{\alpha\beta z-q}{z(z-1)(z-a)}w=0,
\end{equation}
where $\alpha+\beta+1=\gamma+\delta+\epsilon$.  The parameters $\alpha, \beta,\gamma,\delta, \epsilon$ determine the
 characteristic exponents of the regular singularities at $z=0,1,a,\infty$: the exponents  are   $\{0,1-\gamma\}$, $\{0,1-\delta\}$, $\{0,1-\epsilon\}$ and $\{\alpha,\beta\}$, respectively.  While the remaining parameters $a$ and $q$, known as the accessory parameters,  involve 
 global monodromy properties of 
 \eqref{int:HE}.

The same as the classical Gauss hypergeometric equation, the Heun equation 
has several confluent forms. Indeed, there are four standard confluent forms when two or more singularities merge into one or more 
irregular singularities (cf. \cite[Eqs. (31.12.1)-(31.12.4)]{OL}).\\
$(i)$ Confluent (or, singly-confluent) Heun equation (CHE):
\begin{equation}\label{int:CHE}
\frac{\mathrm{d}^{2}w}{\mathrm{d}z^{2}}+\left(1+\frac{\gamma}{z}+
\frac{\delta}{z-a}\right)\frac{\mathrm{d}w}{\mathrm{d}z}+\frac{pz-q}{z(z-a)}w=0.
\end{equation}
This equation has  two regular singularities, and an irregular singularity of  rank $1$ at infinity arising from the coalescing of two regular singularities.\\
$(ii)$ Doubly-confluent Heun equation (DHE):
\begin{equation}\label{int:DHE}
\frac{\mathrm{d}^{2}w}{\mathrm{d}z^{2}}+\left(\frac{1}{2}+\frac{\gamma}{z}-\frac{a^2}{2 z^{2}}\right)\frac{\mathrm{d}w}{\mathrm{d}z}
+\frac{pz-q}{z^{2}}w=0.
\end{equation}
This equation  has two irregular singularities of rank $1$ at zero and infinity,  each originating from the confluence of two regular singularities.\\
$(iii)$ Biconfluent Heun equation (BHE): 
\begin{equation}\label{int:BHE}\frac{\mathrm{d}^{2}w}{\mathrm{d}z^{2}}+\left(2z+2a+\frac{\gamma}{z}\right)\frac{\mathrm{d}w}{\mathrm{d}z}+\frac{pz-q}{z}w=0.
\end{equation}
This equation possesses  a regular singularity, and an irregular singularity of rank $2$ at infinity arising from the coalescing of three regular singularities.\\
$(iv)$ Triconfluent Heun equation (THE): 
\begin{equation}\label{int:THE}\frac{\mathrm{d}^{2}w}{\mathrm{d}z^{2}}+(2z^{2}+a)\frac{\mathrm{d}w}{\mathrm{d}z}+(pz-q)w=0.
\end{equation}
This equation has one  irregular singularity of rank $3$ at infinity, resulting  from the coalescing of all of the four singularities.

Modified or reduced forms
of the  confluent Heun equations
are also available. Five reduced confluent equations appear as a result of weak confluence processes. For instance, we have the reduced triconfluent Heun equation (RTHE)
\begin{equation}\label{int:RTHE}
\frac{\mathrm{d}^{2}w}{\mathrm{d}z^{2}}-\left(4z^{3}+2az+q\right)w=0,
\end{equation}
which has one irregular singularity of rank $5/2$ at infinity. There is
the reduced biconfluent Heun equation (RBHE)
\begin{equation}\label{int:RBHE}
\frac{\mathrm{d}^{2}w}{\mathrm{d}z^{2}}+\frac{2\alpha}{z}\frac{\mathrm{d}w}{\mathrm{d}z}-\left(z+a+\frac{q}{z}\right)w=0
\end{equation}
with one irregular singularity of rank $3/2$ at infinity and one regular singularity at the origin.
 Also, ten more equations are to be added to the list, if
elementary singularities are regarded as special types of singularities; see \cite{Yu3}.

The HE, together with all of its confluent  forms,  is called  the Heun class of equations; see \cite{Yu1}. This class of equations has numerous applications in theory of black holes, general relativity, polymer and chemical physics, astrophysics, molecular physics, crystalline materials and cosmology, etc.; see \cite{Ho,OL,Ron,Yu3} and the references therein.

Painlev\'{e} equations  and
Painlev\'{e} transcendents will be the main tools  in the present investigation. By Painlev\'{e} equations
we mean six   equations  denoted by PI-PVI. The six Painlev\'{e} equations were first introduced by Paul Painlev\'{e} and his coworkers 
at the turn of the twentieth century in the classification of second-order ODEs of the form
$$y_{xx}=F(x,y,y_{x})$$ with $F$ meromorphic in $x$ and rational in $y$ and $y_{x}$, whose solutions have the Painlev\'{e} property, that is, the only movable singularities of their solutions are   poles or isolated essential singularities; see  \cite{GLS,OL}. The   Painlev\'{e} equations find important applications in various fields, such as number theory, statistical mechanics, random matrix theory, orthogonal polynomials, quantum gravity and quantum field theory, nonlinear optics and fibre optics, etc.; see \cite{Cla,FIKN,For,OL} and the references therein.

In \cite{Fu1,Fu2}, Fuchs discovered a remarkable connection between the HE and the PVI. He added an extra apparent  singularity  at $z=y$ to the HE \eqref{int:HE}. The apparent singularity is presented in the equation but is absent in the solution. The position of the apparent singularity $y$ is deformed with the regular singularity $a$ in \eqref{int:HE}.
Then,  the PVI equation arose as a compatibility condition, of the deformed HE coupled with another linear equation with differentiation in the variable $a$.   Inspired by  the works of Fuchs,  Slavyanov et al. studied the  deformation of the Heun class of equations by adding an  apparent singularity.  It was shown that each Painlev\'{e} equation can be considered as the isomonodromy deformation condition for a deformed equation of the Heun class; see \cite{Yu2}-\cite{Yu4}.
In \cite{Yu1}, Slavyanov also discovered  that  the Heun class equations   can be regarded as the quantization of the classical Hamiltonian of the Painlev\'{e} equations. In this sense, there exists the following correspondence between Heun class   equations and Painlev\'{e} equations:
\begin{equation}\label{0}
\mathrm{HE} \rightarrow \mathrm{PVI},\ \mathrm{CHE} \rightarrow \mathrm{PV},\ \mathrm{BHE} \rightarrow \mathrm{PIV},\ \mathrm{DHE} \rightarrow \mathrm{PIII},\ \mathrm{THE} \rightarrow \mathrm{PII},\ \mathrm{RTHE} \rightarrow \mathrm{PI}.
\end{equation}
Here `$\rightarrow$' means that given an equation of Heun class there is a Painlev\'{e} equation corresponding to it.

 It is well-known that every Painlev\'{e} equation can be obtained as the compatibility condition of a $2\times2$ matrix linear system (also called Lax pair) \cite{Jim2}:
\begin{equation}\label{LaxPair1}
\frac{\partial\Phi(z,x)}{\partial z}=A(z,x)\Phi(z,x),
\end{equation}
and
\begin{equation}\label{LaxPair2}
\frac{\partial\Phi(z,x)}{\partial x}=B(z,x)\Phi(z,x).
\end{equation}
The first-order matrix equation \eqref{LaxPair1} is  equivalent to two second-order linear ODEs satisfied respectively by the elements of the first row and second row of $\Phi(z,x)$. More precisely, if we denote the coefficient matrix $A(z,x)$ by
$$
A(z,x):=A=\begin{pmatrix}
A_{11} & A_{12}\\
A_{21} & A_{22}
\end{pmatrix},
$$
then each element of the first row of $\Phi(z,x)$ satisfies the ODE:
\begin{equation}\label{Phi1}
\frac{\mathrm{d}^{2}\Phi_{1}}{\mathrm{d}z^{2}}-\left(\mathrm{Tr} A+\frac{A_{12}'}{A_{12}}\right)\frac{\mathrm{d}\Phi_{1}}{\mathrm{d}z}
+\left(\det A-A_{11}'+A_{11}\frac{A_{12}'}{A_{12}}\right)\Phi_{1}=0,
\end{equation}
while the elements of the second row solve a similar ODE:
\begin{equation}\label{Phi2}
\frac{\mathrm{d}^{2}\Phi_{2}}{\mathrm{d}z^{2}}-\left(\mathrm{Tr} A+\frac{A_{21}'}{A_{21}}\right)\frac{\mathrm{d}\Phi_{2}}{\mathrm{d}z}
+\left(\det A-A_{22}'+A_{22}\frac{A_{21}'}{A_{21}}\right)\Phi_{2}=0.
\end{equation}
Here, the prime indicates  the differentiation with respect to $z$.

It was observed in \cite[p.86]{FIKN} that the linear equation \eqref{Phi1} for  PVI is equivalent to HE when the solutions to PVI approach the critical values $0,1,x,\infty$, which are the actual singular points of \eqref{Phi1}.
Recently,  Dubrovin and Kapaev  \cite{DK}  studied in detail the equivalence of  HE and  the Lax pair for PVI  at the movable poles of the solutions of PVI. Moreover, they established a connection  between the accessory parameter of HE and the free parameter of the Laurent expansion of PVI.  Similar results were  also derived  earlier in \cite{LLNZ}. 
 In \cite{CC,CN1},  the equivalence of  CHE  and the Lax pair for PV at certain critical value of PV was   shown and the accessory parameter was expressed in terms of  the $\tau$-function of PV.  An application  to black holes was also addressed therein. 

In this paper, we consider the  monodromy and isomonodromy deformation of the Heun class equations.
Firstly, we  describe the monodromy of Heun class equations and consider the isomonodromy deformation by using the linear systems for the corresponding Painlev\'{e} equations.  We show  that the Heun class equations can be obtained as limits  of
the linear system \eqref{Phi1} or \eqref{Phi2}  associated with the Painlev\'e equations I-VI and XXXIV as the corresponding Painlev\'e transcendents  $y(x)$ approach  one of the actual singular points of the linear system:
\begin{equation}\label{int:HEToP1}
\mathrm{PI} \rightarrow   \mathrm{RTHE},~~~~      \mathrm{PII}  \rightarrow   \mathrm{THE}, ~~~~ \mathrm{PXXXIV} \rightarrow   \mathrm{RBHE},~~~~    \mathrm{PIII} \rightarrow \mathrm{DHE},
\end{equation}
and
\begin{equation}\label{int:HEToP2}
        \mathrm{PIV}  \rightarrow   \mathrm{BHE},~~~~    \mathrm{PV}  \rightarrow    \mathrm{CHE},~~~~   \mathrm{PVI} \rightarrow \mathrm{HE}.\end{equation}
Moreover,  the accessory parameters in these equations of Heun class  are determined by the corresponding Painlev\'{e} functions and $\tau$-functions.
Secondly,  using the the limiting procedure in \eqref{int:HEToP1}, \eqref{int:HEToP2} and the monodromy   of a specific Heun class equation, we show that there is a discrete set of pairs of accessory parameters $(a_n,q_n)$,  such that the equation of Heun class
corresponding to these parameters has the same monodromy data. Under a bijection,  the discrete set coincides with the set of parameters $(a_n, b_n)$ in the Taylor or  Laurent expansions, respectively at   the zeros or poles $a_n$, of the unique solution  to the corresponding  Painlev\'e equation with the same monodromy data  as the  Heun class equation. Finally,
using   known asymptotic expansions for the Painlev\'{e} transcendents and the associated $\tau$-functions in the literature, we  derive   some asymptotic approximations for the isomonodromy sets of  accessory parameters in the Heun class equations.

The rest of this paper is organized in the following way.  We consider the monodromy data and isomonodromy deformation of the Heun class equations RTHE-HE in Sections \ref{sec: RTHE}-\ref{sec:HE}, respectively. We describe the  isomonodromy  sets of accessary parameters by the sets of parameters  in the Taylor or Laurent expansion near the zeros or poles of the  solution of the corresponding  Painlev\'e equation with the same monodromy data  as the  Heun class equation. The main results are stated in Theorems \ref{thm:IsoSetRTHE}-\ref{thm:IsoSetHE} at the end of each section. The proofs of  Theorem  \ref{thm:IsoSetRTHE} and Theorem \ref{thm:IsoSetHE} are given in  Section \ref{sec: RTHE} and Section \ref{sec:HE}, respectively, with details included. These proofs are concerned with two  representative examples of Heun class equations,  namely the HE with four regular singularities  and the RTHE with an irregular singularity. While  Theorems  \ref{thm:IsoSetTHE}-\ref{thm:IsoSetCHE}  can be proved in the same manner and  we skip the detail. In the last section, we   derive   asymptotic approximations for
 isomonodromy sets of accessory parameters  of some Heun class equations, expressed in terms of the monodromy data. The equations of Heun class we considered in this section include the RBHE, CHE, and DHE.

\section{ Accessory parameters of RTHE }\label{sec: RTHE}

\subsection{Monodromy of  RTHE}

Consider the RTHE equation \eqref{int:RTHE} with  parameters $a$ and $q$.
There exist unique  solutions,   of the form $Y_k(z)=(y_{k1}(z), y_{k2}(z))$, which satisfy the normalized asymptotic behavior as $z\to \infty$
\begin{equation}\label{eq:RTHE-infty}Y_k(z)\sim z^{-\frac{3}{4}}(1, 1) e^{(\frac{4}{5}z^{\frac{5}{2}}+az^{\frac{1}{2}})\sigma_3}, \quad z\in \Omega_k,\end{equation}
where $ \Omega_k$,  $k=-1,\cdots,4$,  are the Stokes sectors defined by
\begin{equation}\label{eq:RTHEStokesSec}
 \Omega_k=\left\{z\in \mathbb{C}:~~\frac{\pi}{5}(2k-5)<\arg z<\frac{\pi}{5}(2k-1)\right \},
\end{equation}
 and $\sigma_3$ is one of the Pauli matrices,
 \begin{equation*}
 \sigma_1=\begin{pmatrix}
                    0 &1 \\
                   1 & 0
          \end{pmatrix},   \quad \sigma_2=
                 \begin{pmatrix}
                    0 & -i \\
                    i & 0 \end{pmatrix} \quad\mbox{and}\quad \sigma_3 =\begin{pmatrix}
                     1 & 0 \\
                    0 & -1
                    \end{pmatrix}.
                    \end{equation*}
The solutions are  related to each other by the Stokes matrices
\begin{equation}\label{eq:RTHE-Stokes1}Y_{k+1}(z)=Y_k(z)S_k, \quad  k=-1,\cdots, 3, \end{equation}
where
\begin{equation}\label{eq:RTHE-StokesM} S_{2k-1}=\begin{pmatrix}
    1 & 0 \\
    s_{2k-1} & 1
\end{pmatrix}, \quad S_{2k}=\begin{pmatrix}
    1 & s_{2k} \\
  0 & 1
\end{pmatrix}. \end{equation}
Moreover, it follows from the  asymptotic behavior in \eqref{eq:RTHE-infty} that
\begin{equation}\label{eq:RTHE-Stokes2} Y_{4}(e^{2\pi i}z)=iY_{-1}(z)\sigma_1. \end{equation}
Using \eqref{eq:RTHE-Stokes1}-\eqref{eq:RTHE-Stokes2}, we find  the cyclic  condition
 \begin{equation}\label{eq:RTHE-CyclicCon}S_{-1}\cdots S_3=i\sigma_1. \end{equation}
 The restriction \eqref{eq:RTHE-CyclicCon} contributes four scalar algebraic equations, three among them are independent.
 Accordingly, 
 the Stokes multipliers $s_k$ fulfill 
 \begin{equation}\label{eq:RTHE-StokesEq}
s_{k}=i(1+s_{k+2}s_{k+3})
\end{equation}for all integer $k$, with  $s_{k+5}=s_k$.
The monodromy data is constituted by the set of Stokes multipliers, which is a 2-dimensional surface in $\mathbb{C}^5$ described
by \eqref{eq:RTHE-StokesEq}
\begin{equation}\label{eq:RTHEMonDat}\{(s_{1}, s_{2},s_{3},s_{4},s_{5})\in \mathbb{C}^5: s_{1},\dots s_{5}~  \mbox{satisfy}~  \eqref{eq:RTHE-StokesEq} \}.\end{equation}

\subsection{Isomonodromy deformation and PI equation}

To study the isomonodromy deformation of the  RTHE equation, it is  convenient to consider a $2\times 2$ matrix  system, which has one irregular singular point of rank 5/2 at infinity. In \cite{FIKN,Jim2}, the isomonodromy deformation of such a matrix  system
 has been considered. We review it briefly below in this section.

Consider the following  Lax pair for PI parameterized in the way as \cite[(C.2)]{Jim2}
\begin{equation}\label{LaxPair-PI}
\left\{\begin{aligned}
&\frac{\partial\Phi(z,x)}{\partial z}=A(z,x)\Phi(z,x),\\
&\frac{\partial\Phi(z,x)}{\partial x}=B(z,x)\Phi(z,x),
\end{aligned}\right.
\end{equation}
where
$$
A(z,x)=
\begin{pmatrix}
-v & z^{2}+yz+y^{2}+\frac{x}{2}\\
4(z-y) & v
\end{pmatrix},\ \ \ \ \
B(z,x)=
\begin{pmatrix}
0 & \frac{z}{2}+y\\
2 & 0
\end{pmatrix}.
$$
The compatibility condition of the Lax pair reads
\begin{equation}\label{CCP1}\left\{
\begin{aligned}
&\frac{\mathrm{d}y}{\mathrm{d}x}=v,\\
&\frac{\mathrm{d}v}{\mathrm{d}x}=6y^{2}+x,
\end{aligned}\right.
\end{equation}
which leads to the PI equation
$$
\frac{\mathrm{d}^{2}y}{\mathrm{d}x^{2}}=6y^{2}+x.
$$
The $\tau$-function associated with PI is defined by (see \cite[(C.7)]{Jim2})
\begin{equation}\label{TauP1}
\sigma(x)=\frac{\mathrm{d}}{\mathrm{d}x}\log \tau(x)=\frac{1}{2}v^{2}-2y^{3}-xy.
\end{equation}
Then, we have (see \cite[(C.9)]{Jim2})
\begin{equation}\label{SigmaFormP1}
\left(\frac{\mathrm{d}^2\sigma}{\mathrm{d}x^2}\right)^2+4\left(\frac{\mathrm{d}\sigma}{\mathrm{d}x}\right)^2+2x \frac{\mathrm{d}\sigma}{\mathrm{d}x}-2\sigma=0.
\end{equation}

There exist unique solutions $\Phi_k(z,x)$ of the equation \eqref{LaxPair-PI}, which satisfy the  normalized asymptotic behavior as $z\to \infty$
\begin{equation}\label{eq:P1-infty}
\Phi_k(z,x)=\left(\frac{z}{4}\right)^{\frac{1}{4}\sigma_3}
\frac{\sigma_3+\sigma_1}{ i\sqrt{2}}\left (I+O\left(z^{-\frac{1}{2}}\right ) \right )e^{\left(\frac{4}{5}z^{\frac{5}{2}}+xz^{\frac{1}{2}}\right )\sigma_3}, \quad z\in \Omega_k, \end{equation}
where $ \Omega_k$ for $k=-1,\cdots,4$ are the Stokes sectors defined by
\eqref{eq:RTHEStokesSec}. From \eqref{eq:P1-infty}, we see that the solutions are  related to each other by the Stokes matrices
\begin{equation}\label{eq:P1-Stokes1}\Phi_{k+1}(z)=\Phi_k(z)\hat{S}_k, \quad  k=-1,\cdots,3, \end{equation}
where
\begin{equation}\label{eq:P1-StokesM} \hat{S}_{2k-1}=\begin{pmatrix}
    1 & 0 \\
    \hat{s}_{2k-1} & 1
\end{pmatrix}, \quad \hat{S}_{2k}=\begin{pmatrix}
    1 & \hat{s}_{2k} \\
  0 & 1
\end{pmatrix}. \end{equation}
The behavior \eqref{eq:P1-infty}  also gives
\begin{equation}\label{eq:P1-Stokes2} \Phi_{4}(e^{2\pi i}z)=i\Phi_{-1}(z)\sigma_1. \end{equation}
From \eqref{eq:P1-Stokes1}-\eqref{eq:P1-Stokes2}, we find the cyclic  condition
 \begin{equation}\label{eq:P1-CyclicCon}\hat{S}_{-1}\hat{S}_0\cdots\hat{S}_3= i\sigma_1.  \end{equation}
This condition  implies the following algebraic equations
 \begin{equation}\label{eq:P1-StokesEq}
 \hat{s}_{k}=i(1+\hat{s}_{k+2}\hat{s}_{k+3}),
\end{equation}
with $k\in \mathbb{Z}$ and $\hat{s}_{k+5}=\hat{s}_{k}$.
Thus, the Stokes multipliers are described by the 2-dimensional surface \eqref{eq:RTHEMonDat}  in $\mathbb{C}^5$ with the coordinates  $(\hat{s}_{k})_{1\leq k\leq 5}$.

\subsection{Reduction of the  linear system  for PI to RTHE} \label{sec: PIRTHE}

In this subsection, we show that  the RBHE can  be obtained  as a limit of the  linear system   \eqref{Phi2} associated with   the Lax pair  for the Painlev\'{e} I equation when $x$ tends to one of the poles of the Painlev\'e I transcendents.

Substituting the enties of $A(z,x)$ into \eqref{Phi2}, we obtain
\begin{equation}\label{Phi2PI}
\frac{\mathrm{d}^{2}\Phi_{2}}{\mathrm{d}z^{2}}-\frac{1}{z-y}\frac{\mathrm{d}\Phi_{2}}{\mathrm{d}z}-Q(z,x)\Phi_{2}=0,
\end{equation}
where
\begin{align*}
Q(z,x)&=4z^{3}+2xz+v^{2}-4y^{3}-2xy-\frac{v}{z-y}\nonumber\\
&=4z^{3}+2xz+2\frac{\mathrm{d}}{\mathrm{d}x}\log \tau(x)-\frac{v}{z-y}.
\end{align*}

To obtain RTHE, we need to eliminate the singularity $z=y$ in \eqref{Phi2PI}. This can be realized by considering the poles of $y$.
It is known that $y$ admits the following Laurent expansion near a pole  (see \cite[Eq. (1.1)]{GLS} ):
\begin{equation}\label{PIExpand}
y(x)=\frac{1}{(x-a)^{2}}-\frac{a}{10}(x-a)^{2}-\frac{1}{6}(x-a)^{3}+b(x-a)^{4}+O((x-a)^{5}),
\end{equation}
where $b$ is an arbitrary parameter.

Using the first equation of \eqref{CCP1}  to replace $v$ by $y'$  and  substituting \eqref{PIExpand} into the expression of $Q$ in \eqref{Phi2PI},  we obtain the RTHE as  $x\rightarrow a$
\begin{equation}\label{RTHE}
\frac{\mathrm{d}^{2}\Phi_{2}}{\mathrm{d}z^{2}}-\left(4z^{3}+2az+q\right)\Phi_{2}=0,
\end{equation}
with the accessory parameter $q$ given by
\begin{equation}\label{PIq}
q=\lim_{x\rightarrow a}2\left(\frac{\mathrm{d}}{\mathrm{d}x}\log \tau(x)-\frac{1}{x-a}\right)=-28b.
\end{equation}


\subsection{Isomonodromy set of  accessory parameters of RTHE}

As shown in Section \ref{sec: PIRTHE}, the isomonodromy deformation of \eqref{LaxPair-PI}  is described by the PI equation. While 
 the RTHE can be obtained  as a limit of the second row of  the isomonodromy family of  $\Phi(z,x)$,  when $x$ tends to one of the pole of  the  Painlev\'e I transcendent.
The accessory parameters of RTHE are then expressed in terms of the
parameters  in the  Laurent expansion  near the poles  of the  PI transcendents. As a consequence, we obtain a family of accessory parameters such that the corresponding RTHE shares the same monodromy data, as stated in the following theorem.

\begin{thm}\label{thm:IsoSetRTHE}
There is a discrete set of pairs of accessory parameters $(a_n,q_n)$  such that  the RTHE  \eqref{int:RTHE}
corresponding to these parameters has the same monodromy data  \eqref{eq:RTHEMonDat}  as the original
one with the parameters $a$ and $q$. This set coincides, via the transformation $q_n=-28b_n$, with the set of parameters $(a_n, b_n)$ in the  Laurent expansion \eqref{PIExpand} near the poles $a_n$, of the unique solution  of PI corresponding to  the same monodromy data \eqref{eq:RTHEMonDat} as the RTHE.
\end{thm}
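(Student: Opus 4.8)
The plan is to transfer the statement, in both directions, to properties of the Painlev\'{e} I transcendent via the reduction of Section~\ref{sec: PIRTHE}. Start from the RTHE \eqref{int:RTHE} with parameters $a,q$, let $(s_1,\dots,s_5)$ denote its Stokes multipliers, and let $\mathcal A=\{(a',q')\in\mathbb C^2:\ \text{the RTHE with parameters }a',q'\text{ has Stokes data }(s_1,\dots,s_5)\}$ be the isomonodromy set we must describe. By the solvability of the inverse monodromy (Riemann--Hilbert) problem for PI and the uniqueness of its solution \cite{FIKN}, there is exactly one PI transcendent $y(x)$ whose Lax pair \eqref{LaxPair-PI} carries the Stokes multipliers $(\hat s_1,\dots,\hat s_5)=(s_1,\dots,s_5)$; being a PI solution, $y$ is meromorphic on $\mathbb C$ with a discrete (in fact infinite) set of poles $\{a_n\}$, and \eqref{PIExpand} holds at each $a_n$ with a free coefficient $b_n$. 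Putting $q_n=-28b_n$ as in \eqref{PIq}, the theorem reduces to the equality $\mathcal A=\{(a_n,q_n)\}_n$; the remaining assertions (discreteness, and the bijection $q_n=-28b_n$ with the Laurent data $(a_n,b_n)$) then follow at once, since $\{a_n\}$ is discrete and $n\mapsto(a_n,q_n)$ is injective.

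For the inclusion $\{(a_n,q_n)\}_n\subseteq\mathcal A$, fix a pole $a_n$ of $y$. Along the isomonodromy family $x\mapsto\Phi(z,x)$ the Stokes multipliers $\hat s_k$ stay constant, while by Section~\ref{sec: PIRTHE} the second-row equation \eqref{Phi2PI} with its $z=\infty$ normalization \eqref{eq:P1-infty} degenerates, as $x\to a_n$, into the RTHE \eqref{RTHE} at $(a_n,q_n)$ normalized by \eqref{eq:RTHE-infty}. Hence the RTHE at $(a_n,q_n)$ inherits the Stokes multipliers $\hat s_k=s_k$, that is $(a_n,q_n)\in\mathcal A$ --- provided this degeneration is continuous at the level of Stokes data, which is the point taken up in the last paragraph.

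For the reverse inclusion $\mathcal A\subseteq\{(a_n,q_n)\}_n$, let $(a',q')\in\mathcal A$ and put $b'=-q'/28$. The ODE $y''=6y^2+x$ has a unique solution $\tilde y$ with, at $x=a'$, a double pole and Laurent expansion \eqref{PIExpand} with free coefficient $b'$: the formal series converges near $a'$ and, by the Painlev\'{e} property, $\tilde y$ extends to a global meromorphic solution \cite{GLS}. By \eqref{PIq} the $x\to a'$ limit of the second-row equation of the Lax pair built on $\tilde y$ is exactly the RTHE at $(a',q')$, so, by the same continuity argument, the Lax pair for $\tilde y$ has Stokes multipliers $(s_1,\dots,s_5)$; uniqueness of the PI transcendent with prescribed monodromy then forces $\tilde y=y$, whence $a'$ is one of the poles $a_n$, $b'=b_n$ and $(a',q')=(a_n,q_n)$.

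The main obstacle is thus the continuity of the Stokes data under the pole-limit $x\to a_n$, which is where the detailed work lies. As $x\to a_n$ one has $y(x),v(x)\to\infty$, so the apparent (non-monodromic) singularity $z=y(x)$ of \eqref{Phi2PI} runs off and coalesces with the rank-$5/2$ irregular point at $\infty$; after the gauge $\Phi_2=(z-y)^{1/2}v$ (which removes the first-order term) and an $x$-dependent rescaling absorbing the blow-up of $(z-y)^{1/2}$, the equation for $v$ becomes $v''-\big(Q(z,x)+\tfrac{3}{4(z-y)^2}\big)v=0$, whose potential converges to $4z^3+2a_nz+q_n$ uniformly on compact $z$-sets thanks to the cancellation of the $\tfrac{2}{x-a_n}$-singular terms recorded in \eqref{PIq}. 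One must upgrade this, together with the convergence of the normalization \eqref{eq:P1-infty} (whose second row, shifted by the gauge factor $(z-y)^{1/2}$, matches the $z^{-3/4}$ prefactor of \eqref{eq:RTHE-infty}), to convergence of the sectorial subdominant solutions and hence of the connection matrices \eqref{eq:P1-StokesM}. Here one uses that the rank of the irregular point at $\infty$ is unchanged in the limit --- so the Stokes structure cannot jump --- together with uniform WKB (or stability-of-Riemann--Hilbert) estimates valid out to $z=\infty$; the delicate zone is $z\approx y(x)$, which must be shown to contribute nothing to the Stokes analysis as it recedes to infinity. Granting this, the gauged and rescaled $\Phi^{(k)}$ converge to the RTHE solutions $Y_k$ up to one common inessential constant, and letting $x\to a_n$ in $\Phi_{k+1}=\Phi_k\hat S_k$ gives $Y_{k+1}=Y_kS_k$ with $S_k=\hat S_k$, which is precisely the identification of Stokes multipliers used above.
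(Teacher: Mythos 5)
Your proposal is correct and follows essentially the same route as the paper: reduce the PI Lax pair to the RTHE at the poles of the PI transcendent via Section~\ref{sec: PIRTHE}, use the $x$-independence of the Stokes multipliers to identify $S_k=\hat S_k$, and invoke uniqueness of the PI solution with prescribed monodromy together with discreteness of its poles. You are in fact more explicit than the paper on two points it takes for granted --- the two-inclusion structure of the identification and the continuity of the Stokes data under the limit $x\to a_n$ --- but this is an elaboration of the same argument rather than a different one.
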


\begin{proof}
Consider the RTHE  \eqref{int:RTHE}  with given accessory parameters $a$ and $q$.
There exist unique  solutions of  \eqref{int:RTHE} which satisfy the normalized asymptotic behavior \eqref{eq:RTHE-infty} as $z\to \infty$.  The solutions are related by the Stokes matrices $S_k$ as given in
\eqref{eq:RTHE-Stokes1} and \eqref{eq:RTHE-StokesM}, while the independent Stokes multipliers constitute the
monodromy data \eqref{eq:RTHEMonDat} for the RTHE.
Moreover, there exist unique solutions $\Phi_k(z,x)$ of the system \eqref{LaxPair-PI} with the normalized behavior \eqref{eq:P1-infty}  at infinity, where the $x$-dependence of $\Phi_k(z,x)$ is described by the unique solution of the PI equation  having the Laurent expansion \eqref{PIExpand}. Here we let $a$ be a pole of the PI solution and  choose the parameter $b$ according to \eqref{PIq}, namely
$b=-\frac{q}{28}$.  The solutions $\Phi_{k}(z, x)$ are related by the Stokes matrices $\hat{S}_k$ defined in
\eqref{eq:P1-Stokes1} and \eqref{eq:P1-StokesM},  which are independent of $x$ and have the same triangular-matrix forms as the Stokes matrices $S_k$ for the RTHE.

 From  the limiting procedure demonstrated  in Section \ref{sec: PIRTHE},
 we obtain the  RTHE equation with accessory parameters $a$  and $q$, as a  limit of the second row of the isomonodromy family  $\Phi_k(z;x)$ when $x\to a$.  It follows from the $x$-independence of the monodromy data that the Stokes matrices $S_k=\hat{S}_k$. Thus, we have shown that any given accessory parameters $(a,q)$ is related, via $q=-28b$,  to the pole parameters $(a,b)$ of the unique solution of PI corresponding to the same monodromy data \eqref{eq:RTHEMonDat}. It is known that each Painlev\'e tanscendents has infinity many poles which are  discrete in the complex plane. Therefore the set of pairs of accessory parameters of RTHE sharing the same monodromy data are also discrete.  Thus, we complete the proof of Theorem \ref{thm:IsoSetRTHE}.
\end{proof}

\section{Accessory parameters of THE}\label{sec:THE}

\subsection{Monodromy of THE}

Consider the THE \eqref{int:THE} with  the accessory  parameters $a$ and $q$ and the fixed parameter $p$.
There exist unique  solutions of \eqref{int:THE} 
with normalized asymptotic behavior as $z\to \infty$
\begin{equation}\label{eq:THE-infty}Y_k(z)\sim e^{-(\frac{1}{3}z^{3}+\frac{a}{2}z)}z^{-1}(1,1) e^{(\frac{1}{3}z^{3}+\frac{a}{2}z)\sigma_3}z^{(\mu-\frac{1}{2})\sigma_3}, \quad z\in \Omega_k,\end{equation}
where the Stokes sectors are defined by
\begin{equation}\label{eq:THEStokesSec}
 \Omega_k=\left\{z\in \mathbb{C}:~~ \frac{\pi}{6}\left (2k-3\right )<\arg z<\frac{\pi}{6}\left(2k+1\right)\right \},
\end{equation}
for $k=1,\cdots,7$.  Here the parameter $\mu$ is given by $\mu=(3-p)/2$. The solutions are  related by the Stokes matrices
\begin{equation}\label{eq:THE-Stokes1}Y_{k+1}(z)=Y_k(z)S_k, \quad  k=1,\dots,6, \end{equation}
where
\begin{equation}\label{eq:THE-StokesM}S_{2k-1}=\begin{pmatrix}
    1 &  s_{2k-1} \\
   0 & 1
\end{pmatrix}, \quad S_{2k}=\begin{pmatrix}
    1 &0 \\
  s_{2k} & 1
\end{pmatrix}. \end{equation}
The behavior \eqref{eq:THE-infty}  also gives
\begin{equation}\label{eq:THE-Stokes2} Y_{7}(e^{2\pi i}z)=Y_{1}(z)e^{\pi i(2\mu-1)\sigma_3}. \end{equation}
We define the monodromy data of THE by
\begin{equation}\label{eq:THEMonDat}\left\{S_k\right\}_{k=1}^6 .\end{equation}
From \eqref{eq:THE-Stokes1}-\eqref{eq:THE-Stokes2}, we see that the Stokes matrices fulfill the  cyclic  condition
 \begin{equation}\label{eq:THE-CyclicCon}S_{1}S_{2}\cdots S_6=e^{\pi i(2\mu-1)\sigma_3}.  \end{equation}
 Thus,  the Stokes multipliers $s_k$, $ k=1,\cdots,6$ are constrained by  the  following algebraic equations
 \begin{equation}\label{eq:THE-CyclicConEq}
 \left\{\begin{aligned}
&1+s_1s_2+e^{2\pi i\mu}(1+s_4s_5)=0,\\
&1+s_2s_3+e^{-2\pi i\mu}(1+s_5s_6)=0,\\
&s_1+s_3+s_1s_2s_3-e^{2\pi i\mu}s_5=0. 
\end{aligned}\right.
  \end{equation}
These equations describe a 3-dimensional  surface in $\mathbb{C}^6$ with the coordinates $(s_k)_{1\leq k\leq 6}.$

\subsection{Isomonodromy deformation and PII equation} \label{IsoDefPII}

To study the isomonodromy deformation of THE, it is  convenient to consider a $2\times 2$ matrix  system, which has one irregular singular point of rank 3 at infinity. In \cite{Jim2}, the isomonodromy deformation of the matrix  system with such an irregular singular point has been considered.

Recall the following Lax pair for PII with the parameterization given in \cite[ (C.10)-(C.11)]{Jim2}:
\begin{equation}\label{LaxPair-PII}
\left\{\begin{aligned}
&\frac{\partial\Phi(z,x)}{\partial z}=A(z,x)\Phi(z,x),\\
&\frac{\partial\Phi(z,x)}{\partial x}=B(z,x)\Phi(z,x),
\end{aligned}\right.
\end{equation}
where
$$
A(z,x)=
\begin{pmatrix}
z^{2}+v+\frac{x}{2} & u(z-y)\\
-\frac{2}{u}\big(vz+vy+\frac{1}{2}-\mu\big) & -z^{2}-v-\frac{x}{2}
\end{pmatrix},\ \ \ \ \
B(z,x)=
\begin{pmatrix}
\frac{1}{2}z & \frac{1}{2}u\\
-\frac{v}{u} & -\frac{1}{2}z
\end{pmatrix}.
$$
The compatibility condition of the above Lax pair gives
\begin{equation}\label{CCP2}
\left\{
\begin{aligned}
&\frac{\mathrm{d}y}{\mathrm{d}x}=y^{2}+v+\frac{x}{2}, \\
&\frac{\mathrm{d}v}{\mathrm{d}x}=-2yv-\frac{1}{2}+\mu,\\
&\frac{\mathrm{d}u}{\mathrm{d}x}=-uy,
\end{aligned}\right.
\end{equation}
which yields the PII equation
\begin{equation}\label{PII}
\frac{\mathrm{d}^{2}y}{\mathrm{d}x^{2}}=2y^{3}+xy+\mu.
\end{equation}
The PII $\tau$-function is defined by (see \cite[(C.15)]{Jim2})
\begin{equation}\label{TauP2}
\frac{\mathrm{d}}{\mathrm{d}x}\log\tau(x)=\frac{1}{2}v^{2}+\left(y^{2}+\frac{x}{2}\right)v+\left(\frac{1}{2}-\mu\right)y.\end{equation}

There exist unique solutions $\Phi_k(z,x)$ of the equation \eqref{LaxPair-PII} specified by the  normalized asymptotic behavior as $z\to \infty$
\begin{equation}\label{eq:P2-infty}
\Phi_k(z,x)=(I+O(1/z) )e^{(\frac{1}{3}z^{3}+\frac{1}{2}xz)\sigma_3}z^{(\mu-\frac{1}{2})\sigma_3}, \quad z\in \Omega_k, \end{equation}
where $\Omega_k$ for $k=1,\cdots,7$ are the Stokes sectors defined in \eqref{eq:THEStokesSec}.
From \eqref{eq:P2-infty}, we see that the solutions are  related to each other by the Stokes matrices
\begin{equation}\label{eq:P2-Stokes1}\Phi_{k+1}(z)=\Phi_k(z)\hat{S}_k, \quad  k=1,\cdots,6, \end{equation}
where
\begin{equation}\label{eq:P2-StokesM} \hat{S}_{2k-1}=\begin{pmatrix}
    1 &  \hat{s}_{2k-1} \\
   0 & 1
\end{pmatrix}, \quad \hat{S}_{2k}=\begin{pmatrix}
    1 &0 \\
   \hat{s}_{2k} & 1
\end{pmatrix}. \end{equation}
We define the monodromy data of the system \eqref{LaxPair-PII} by
\begin{equation}\label{eq:P2MonDat}\{\hat{S}_k\}_{k=1,\dots, 6}.\end{equation}
The behavior \eqref{eq:P2-infty}  also implies
\begin{equation}\label{eq:P2-Stokes2} \Phi_{7}(e^{2\pi i}z)=\Phi_{1}(z)e^{\pi i(2\mu-1)\sigma_3}. \end{equation}
From \eqref{eq:P2-Stokes1}-\eqref{eq:P2-Stokes2}, we find the cyclic  condition
 \begin{equation}\label{eq:P2-CyclicCon}\hat{S}_{1}\hat{S}_{2}\cdots\hat{S}_6=e^{\pi i(2\mu-1)\sigma_3}.  \end{equation}
 The equation gives us  three independent equations of the Stokes multipliers $\hat{s}_k$, $  k=1,\cdots,6$ as given in \eqref{eq:THE-CyclicConEq}; see also \cite[Proposition 5.3]{FIKN}.

\subsection{Reduction of the  linear system   for PII to  THE}\label{sec:PIItoTHE}
In this subsection, we derive THE from the linear system \eqref{Phi1} for PII at each pole of  the solutions of PII.

Substituting  $A(z,x)$ into  \eqref{Phi1} gives
\begin{equation}\label{Phi1P2}
\frac{\mathrm{d}^{2}\Phi_{1}}{\mathrm{d}z^{2}}-\frac{1}{z-y}\frac{\mathrm{d}\Phi_{1}}{\mathrm{d}z}-Q(z,x)\Phi_{1}=0,
\end{equation}
where
\begin{align*}
Q(z,x)&=z^{4}+xz^{2}+(1+2\mu)z+\frac{x^{2}}{4}+v^{2}+\left(2y^{2}+x\right)v+\left(1-2\mu\right)y-\frac{z^{2}+\frac{x}{2}+v}{z-y}\\
&=z^{4}+xz^{2}+(1+2\mu)z+\frac{x^{2}}{4}+2\frac{\mathrm{d}}{\mathrm{d}x}\log \tau(x)-\frac{z^{2}+\frac{x}{2}+v}{z-y}.
\end{align*}

To eliminate the singularity $z=y$ in \eqref{Phi1P2}, we consider the poles of $y$.
Let $x=a$ be a pole of $y$, then $y$ possesses the following Laurent expansion (see \cite[(17.1)]{GLS}):
\begin{equation}\label{yPIIExpand}
y(x)=\frac{\varepsilon}{x-a}-\frac{\varepsilon a}{6}(x-a)-\frac{\mu+\varepsilon}{4}(x-a)^{2}+b(x-a)^{3}+O((x-a)^{4}),
\end{equation}
where $\varepsilon=\varepsilon_{\pm}=\pm1$ and $b\in \mathbb{C}$ is arbitrary.
From the first equation of the compatibility condition in \eqref{CCP2} and \eqref{yPIIExpand}, we obtain
\begin{equation}\label{vP2Expand}
v(x)=\left\{\begin{aligned}
  &-\frac{2}{(x-a)^{2}}+O(1), &  \varepsilon=\varepsilon_{+},\\
  & O(x-a),  \  &  \varepsilon=\varepsilon_{-}.
\end{aligned} \right.
\end{equation}

Hence, taking $x\rightarrow a$ in \eqref{Phi1P2} and then making the transformation
$$
\Phi_{1}=w\exp\left(\frac{1}{3}z^{3}+\frac{a}{2}z\right),
$$
we obtain the THEs
\begin{equation}\label{THE1}
\left\{\begin{aligned}
&\frac{\mathrm{d}^{2}w}{\mathrm{d}z^{2}}+(2z^{2}+a)\frac{\mathrm{d}w}{\mathrm{d}z}+(pz-q)w=0,\\
&p=3-2\mu,\\
&q=\lim_{x\rightarrow a}2\left(\frac{\mathrm{d}}{\mathrm{d}x}\log\tau(x)-\frac{1}{x-a}\right)
=-\frac{a^{2}}{18}-10b,
\end{aligned}\right.
\end{equation}
for $\varepsilon=\varepsilon_{+}=1$,
\begin{equation}\label{THE2}
\left\{
\begin{aligned}
&\frac{\mathrm{d}^{2}w}{\mathrm{d}z^{2}}+(2z^{2}+a)\frac{\mathrm{d}w}{\mathrm{d}z}+(pz-q)w=0,\\
&p=1-2\mu,\\
&q=\lim_{x\rightarrow a}2\frac{\mathrm{d}}{\mathrm{d}x}\log\tau(x)=-\frac{a^{2}}{18}+10b,
\end{aligned}
\right.
\end{equation}
for $\varepsilon=\varepsilon_{-}=-1$.

%

%
\subsection{Isomonodromy set of  accessory parameters of THE}

Thus, we  see that  THE can be obtained  as a limit of the first row of  the isomonodromy family of  $\Phi(z,x)$,  when  $x\to a$, with $a$ being the pole of  the  Painlev\'e II transcendent  corresponding to the same monodromy data of THE.  While, the accessory parameters $(a,q)$ are expressed in terms of the parameters in the  Laurent expansion  of the  PII transcendent. Consequently, we have the following description of the isomonodromy set of  accessory parameters of THE.

\begin{thm}\label{thm:IsoSetTHE}
There is a discrete set of pairs of accessory parameters $(a_n,q_n)$  such that the THE  \eqref{int:THE},  corresponding to these parameters has the same monodromy data \eqref{eq:THEMonDat} and \eqref{eq:THE-CyclicConEq} as the original equation with the parameters $a$ and $q$. Under the bijection given in the third equation of  \eqref{THE1},  this set coincides with the set of parameters $(a_n, b_n)$ in the  Laurent expansion  (cf. \eqref{yPIIExpand}) near the poles $a_n$ of the unique solution  of PII equation \eqref{PII} with the parameter  $ \mu= (3-p)/2$ and the same monodromy data \eqref{eq:THEMonDat} and \eqref{eq:THE-CyclicConEq}  as the THE.
\end{thm}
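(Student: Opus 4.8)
The plan is to carry out, step for step, the argument used for Theorem~\ref{thm:IsoSetRTHE}, with the PII Lax pair \eqref{LaxPair-PII} in place of the PI Lax pair and the reduction of Section~\ref{sec:PIItoTHE} in place of that of Section~\ref{sec: PIRTHE}. Fix the parameter $p$ of the THE \eqref{int:THE} and set $\mu=(3-p)/2$, which pins down the exponent data at infinity in \eqref{eq:THE-infty}. Starting from a THE with given accessory parameters $a$ and $q$, let $\{S_{k}\}_{k=1}^{6}$ be its monodromy data \eqref{eq:THEMonDat}, constrained by the cyclic relation \eqref{eq:THE-CyclicConEq} with this value of $\mu$. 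The objective is to describe all pairs $(a',q')$ for which the THE with the same $p$ carries these same Stokes data, and to identify them with pole data of a single PII transcendent.

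First I would produce that transcendent. Solving the third line of \eqref{THE1} for $b$, set $b=-(q+a^{2}/18)/10$; then there is a unique solution $y(x)$ of \eqref{PII} with parameter $\mu$ having, at $x=a$, the Laurent expansion \eqref{yPIIExpand} with $\varepsilon=\varepsilon_{+}=1$ and this value of $b$, and by \eqref{vP2Expand} the associated $v(x)$ has a double pole at $x=a$. Along this transcendent one has the isomonodromic family $\Phi_{k}(z,x)$ normalized by \eqref{eq:P2-infty}; its Stokes matrices $\hat S_{k}$ are independent of $x$, have the alternating triangular structure \eqref{eq:P2-StokesM}, and obey the cyclic relation \eqref{eq:P2-CyclicCon}.

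Next I would pass to the limit $x\to a$, following Section~\ref{sec:PIItoTHE}. Since the apparent singularity $z=y(x)$ of the first-row equation \eqref{Phi1P2} recedes to infinity, on every compact $z$-set the coefficients of \eqref{Phi1P2} converge to those of the limiting equation $\Phi_{1}''=Q_{\infty}\Phi_{1}$, and after the scalar gauge $\Phi_{1}=w\exp(\tfrac13 z^{3}+\tfrac a2 z)$ one recovers precisely the THE \eqref{THE1} with $p=3-2\mu$ and accessory parameter $q$. Because this gauge is one scalar factor common to all Stokes sectors \eqref{eq:THEStokesSec}, I would then check that the canonical solutions of \eqref{Phi1P2} converge, after the gauge, to the THE solutions $Y_{k}$ normalized by \eqref{eq:THE-infty} --- the factor $z^{-1}$ in \eqref{eq:THE-infty} being exactly what reconciles the two normalizations --- so that the Stokes matrices pass to the limit unchanged, $S_{k}=\hat S_{k}$. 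Hence $y(x)$ has the monodromy data $\{S_{k}\}$ of the THE we began with, and since for fixed $\mu$ this data determines the PII solution uniquely (the Riemann--Hilbert correspondence for PII, see \cite{FIKN}), $y$ is the unique such transcendent.

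The conclusion then follows exactly as in Theorem~\ref{thm:IsoSetRTHE}. Running the reduction of Section~\ref{sec:PIItoTHE} at each pole $a_{n}$ of $y(x)$ of type $\varepsilon_{+}$, with free Laurent coefficient $b_{n}$, produces a THE with the same $p$ and accessory parameters $(a_{n},q_{n})$, $q_{n}=-a_{n}^{2}/18-10b_{n}$, all sharing $\{S_{k}\}$; conversely, any THE with this $p$ and the same monodromy data yields, by the two steps above, a PII transcendent with parameter $\mu$ and the same data, which by uniqueness is $y$, so its accessory parameters are read off at one of the $\varepsilon_{+}$ poles of $y$. As the poles of $y$ are discrete in $\mathbb{C}$ (cf.\ \cite{GLS}), the set $\{(a_{n},q_{n})\}$ is discrete, which completes the proof; the poles of $y$ of type $\varepsilon_{-}$ yield, via \eqref{THE2}, the analogous family for the THE with parameter $p-2$, and are handled in the same way. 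The step I expect to be the main obstacle is the convergence claim in the third paragraph: one must verify that the fundamental solutions $\Phi_{k}(z,x)$, with their Stokes structure, converge as $x\to a$ uniformly enough on each Stokes sector --- away from the receding singularity $z=y(x)\to\infty$ --- that no sector is lost or merged and no extra conjugation is introduced, so that $S_{k}=\hat S_{k}$ holds exactly; this is what turns the formal reduction of Section~\ref{sec:PIItoTHE} into a genuine statement about monodromy, and it is secured by the isomonodromy property together with $y(x)\to\infty$. The remaining computations --- in particular verifying $q=-a^{2}/18-10b$ from \eqref{TauP2} --- coincide with those already displayed in the proof of Theorem~\ref{thm:IsoSetRTHE}, and I would omit them.
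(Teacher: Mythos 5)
Your proposal is correct and follows essentially the same route the paper takes: the paper explicitly omits the proof of Theorem~\ref{thm:IsoSetTHE}, stating it is proved "in the same manner" as Theorem~\ref{thm:IsoSetRTHE}, and your argument is precisely that transposition — the PII Lax pair \eqref{LaxPair-PII} and the reduction of Section~\ref{sec:PIItoTHE} at an $\varepsilon_{+}$ pole, the $x$-independence of the Stokes matrices giving $S_k=\hat S_k$, the inversion $b=-(q+a^{2}/18)/10$ of the third equation of \eqref{THE1}, and discreteness of the poles of the PII transcendent. Your explicit flagging of the uniform-convergence step for the canonical solutions as $x\to a$ is a point the paper leaves implicit, but it matches the paper's level of justification.
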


\section{ Accessory parameters of RBHE }\label{sec:RBHE}

\subsection{Monodromy  of RBHE}

Consider the RBHE \eqref{int:RBHE} with  the fixed parameter $2\alpha$ and the accessory parameters $a$ and $q$.
There exist two linear independent solutions $Y_k=(y_{k1},y_{k2})$ of \eqref{int:RBHE}   normalizing  the 
asymptotic behavior as $z\to \infty$
\begin{equation}\label{eq:RBHE-infty}Y_k(z)\sim z^{-(\alpha+\frac{1}{4})}(1,i) e^{-(\frac{2}{3}z^{\frac{3}{2}}+az^{\frac{1}{2}})\sigma_3}, \quad z\in \Omega_k, \end{equation}
where  $\Omega_k$ for $k=-1,\cdots,2$   are the Stokes sectors
\begin{equation}\label{eq:P34StokesSec}
 \Omega_k=\left\{z\in \mathbb{C}:~~ \frac{\pi}{3}(2k-3)<\arg z<\frac{\pi}{3}(2k+1) \right \}.
\end{equation}
 The solutions are  related to each other by the Stokes matrices
\begin{equation}\label{eq:RBHE-Stokes1}Y_{k+1}(z)=Y_k(z)S_k, \quad  k=-1,0,1, \end{equation}
where
\begin{equation}\label{eq:RBHE-StokesM} S_{\pm 1}=\begin{pmatrix}
    1 & 0 \\
   s_{\pm 1} & 1
\end{pmatrix}, \quad S_{0}=\begin{pmatrix}
    1 & s_{0} \\
  0 & 1
\end{pmatrix}. \end{equation}
The behavior \eqref{eq:RBHE-infty} 
also gives
\begin{equation}\label{eq:RBHE-Stokes2} Y_{2}(e^{2\pi i}z)=e^{-2\pi i\alpha}Y_{-1}(z)i\sigma_2. \end{equation}
Moreover,
from
\eqref{int:RBHE} we see that the characteristic exponents   at $z=0$ are  $\{0,1-2\alpha\}$.
Hence,  as $z\to 0$, we have the asymptotic behavior
\begin{equation}\label{eq:RBHE-asy-O} Y_2(z) \sim z^{-\alpha}(1,z)z^{\alpha\sigma_3}E \end{equation}
for $2\alpha\not\in \mathbb{Z}$,~
 with $E$ being  some invertible constant matrix.
Using \eqref{eq:RBHE-Stokes1}-\eqref{eq:RBHE-asy-O}, we obtain the same cyclic  condition as that for the Lax pair of the PXXXIV equation
\begin{equation}\label{eq:RBHE-CyclicCon}S_{-1}S_0S_1= i\sigma_2E^{-1}e^{-2\pi i\alpha\sigma_3}E.  \end{equation}
This condition  implies the following algebraic equation
 \begin{equation}\label{eq:RBHE-StokesEq}
 s_{-1}s_{0}s_{1}+s_{-1}+s_{1}-s_{0}=-2\cos(2\pi \alpha).
\end{equation}
Thus,  the monodromy data for  RBHE \eqref{int:RBHE} 
are described by the  2-dimensional complex surface
\begin{equation}\label{eq:RBHEMonDat}\left\{(s_{-1}, s_{0}, s_{1})\in \mathbb{C}^3:~~s_{-1}s_{0}s_{1}+s_{-1}+s_{1}-s_{0}=-2\cos(2\pi \alpha) \right \}.\end{equation}

\subsection{Isomonodromy  deformation  and PXXXIV equation}

To study the isomonodromy deformation of the  RBHE \eqref{int:RBHE}, we regard $z^{\alpha}Y_k(z)$ as the first row of a  $2\times 2$ matrix  system  $\Phi_k(z,x)$ when $x\to a$. Then, we look for the system $\Phi_k(z,x)$ such that  $\Phi_k(z,x)$ satisfy the same jump relations \eqref{eq:RBHE-Stokes1}-\eqref{eq:RBHE-Stokes2} and the following asymptotic behaviors
\begin{equation}\label{eq:P34-infty}
\Phi_k(z,x)=z^{-\frac{1}{4}\sigma_3}\frac{I+i\sigma_1}{\sqrt{2}}
\left(I+O\left(z^{-\frac{1}{2}}\right) \right )e^{-\left(\frac{2}{3}z^{\frac{3}{2}}+xz^{\frac{1}{2}}\right)\sigma_3}, \quad z\in \Omega_k, \end{equation}
as $z\to \infty$, where  $\Omega_k$, $k=-1,\cdots, 2$,  are the Stokes sectors defined in
\eqref{eq:P34StokesSec},
and
\begin{equation}\label{eq:P34-zero}\Phi_2(z,x) =\Phi^{(0)}(x)\left(I+O(z)\right)z^{\alpha\sigma_3}E,~~z\to 0,
\end{equation}
where the parameter $2\alpha\not\in \mathbb{Z}$ and $\Phi^{(0)}(x)$ is a matrix independent of $z$.
The behavior of $\Phi_k(z,x) $ as $z\to 0$ are then determined by \eqref{eq:P34-infty} and the jump relations.

It can be shown that $\Phi_k(z,x)$ satisfies the following Lax pair with rational coefficients  (see \cite[Chapter 5]{FIKN},  \cite[Lemma 3.2]{ik} and \cite{WXZ})
\begin{equation}\label{LaxPairP34}
\left\{\begin{aligned}
&\frac{\partial\Phi(z,x)}{\partial z}=A(z,x)\Phi(z,x),\\
&\frac{\partial\Phi(z,x)}{\partial x}=B(z,x)\Phi(z,x),
\end{aligned}\right.
\end{equation}
where
$$
A(z,x)=
\begin{pmatrix}
\frac{y'}{2z} & i-i\frac{y}{z}\\
-iz-i(y+x)-i\frac{(y')^2-(2\alpha)^2}{4yz} & -\frac{y'}{2z}
\end{pmatrix},\ \ \
B(z,x)=
\begin{pmatrix}
0 & i\\
-iz-2i(y+\frac{x}{2}) & 0
\end{pmatrix}.
$$
The Lax pair is related by a gauge transformation to the  Lax pair for  PII found by Flaschka and Newell; see for instance  \cite[Chapter 5]{FIKN} and \cite[Equation (3.19)]{ik}.

The compatibility condition of the above Lax pair is described by the PXXXIV equation
\begin{equation}\label{P34}
\frac{\mathrm{d}^{2}y}{\mathrm{d}x^{2}}=\frac{1}{2y}\left(\frac{\mathrm{d}y}{\mathrm{d}x}\right)^{2}+4y^{2}+2xy-\frac{(2\alpha)^{2}}{2y}.
\end{equation}
Let
$$v(x)=\frac{y'(x)-2\alpha}{2y(x)},
$$
then $-2^{-1/3}v(-2^{-1/3}x)$ satisfies the PII equation   \eqref{PII} with $\mu=-(2\alpha+\frac{1}{2})$.
The associated $\tau$-function can be defined as
\begin{equation}\label{TauP34}
\frac{\mathrm{d}}{\mathrm{d}x}\log\tau(x)=-y(x)^{2}+
\left(v(x)^{2}-x\right )y(x)+2\alpha v(x),
\end{equation}
which is related simply to the standard Hamiltonian for the PII equation; cf. \cite[ Equation (32.6.9)]{OL}.

Thus,
we  see that the solutions $\Phi_k(z,x)$ to the Lax pair   \eqref{LaxPairP34} subject to the boundary conditions \eqref{eq:P34-infty}-\eqref{eq:P34-zero} share 
the same monodromy data as RBHE (independent of $x$) given in   \eqref{eq:RBHEMonDat}. While  the isomonodromy deformation of $\Phi_k(z,x)$  with respect to $x$ are described by the PXXXIV equation.

In the next subsection, we show that the RBHE can  actually be obtained  as a limit of the  linear system \eqref{Phi1},  associated with  the Lax pair  for the Painlev\'{e} XXXIV equation, when the Painlev\'e XXXIV transcendent   $y(x)$ tends to   zero or infinity.

\subsection{Reduction of the  linear system for PXXXIV to RBHE}
Substituting the elements of $A(z,x)$ into \eqref{Phi1} gives
\begin{equation}\label{Phi1P34}
\frac{\mathrm{d}^{2}\Phi_{1}}{\mathrm{d}z^{2}}-\left(\frac{1}{z-y}-\frac{1}{z}\right)\frac{\mathrm{d}\Phi_{1}}{\mathrm{d}z}-P(z,x)\Phi_{1}=0,
\end{equation}
where
\begin{align*}
P(z,x)&=z+x+\frac{((y')^2-4\alpha^2)/(4y)-y^2-xy}{z}+\frac{\alpha^2}{z^{2}}-\frac{y'}{2z(z-y)}\\
&=z+x+\frac{1}{z}\left(\frac{\mathrm{d}}{\mathrm{d}x}\log\tau(x)\right)+\frac{\alpha^2}{z^{2}}-\frac{y'}{2z(z-y)}.
\end{align*}
To eliminate the singularity $z=y$, we consider the poles and zeros of $y$. It is readily seen that
$y$ solving \eqref{P34} 
possesses the following Laurent expansion at a pole
\begin{equation}\label{P34ExpandAtpole}
y(x)=\frac{1}{(x-a)^{2}}-\frac{a}{3}-\frac{1}{2}(x-a)+b(x-a)^{2}+O\left((x-a)^{3}\right ),
\end{equation}
and the Taylor expansion at a zero
\begin{equation}\label{P34ExpandAtzero}
y(x)=\varepsilon(x-a)+b(x-a)^{2}+\frac{2a\varepsilon}{3}(x-a)^{3}+O\left((x-a)^{4}\right),
\end{equation}
where $\varepsilon=\varepsilon_{\pm}=\pm2\alpha$ and $b$ is arbitrary.

Taking $x\rightarrow a$ in \eqref{Phi1P34} and making the transformation
\begin{equation}\label{P34PhiTransf}\Phi_{1}=z^{\alpha}w,\end{equation}
we see that $w(z)$ satisfies the following RBHEs
\begin{equation}\label{RBHE1}
\left\{\begin{aligned}
&\frac{\mathrm{d}^{2}w}{\mathrm{d}z^{2}}+\frac{2\alpha+1}{z}\frac{\mathrm{d}w}{\mathrm{d}z}-\left(z+a+\frac{q}{z}\right)w=0,\\
&q=\lim_{x\rightarrow a}\left(\frac{\mathrm{d}}{\mathrm{d}x}\log\tau(x)-\frac{1}{x-a}\right)=\frac{a^2}{3}-5b,
\end{aligned}\right.
\end{equation}
when $a$ is a pole of $y(x)$,  and
\begin{equation}\label{RBHE2}
\left\{\begin{aligned}
&\frac{\mathrm{d}^{2}w}{\mathrm{d}z^{2}}+\frac{2\alpha}{z}\frac{\mathrm{d}w}{\mathrm{d}z}-\left(z+a+\frac{q}{z}\right)w=0, \\
&q=\lim_{x\rightarrow a}\frac{\mathrm{d}}{\mathrm{d}x}\log\tau(x)=b,
\end{aligned}\right.
\end{equation}
when $a$ is a zero of $y(x)$ with coefficient $\varepsilon=\varepsilon_+= 2\alpha$ in \eqref{P34ExpandAtzero}.
When $a$ is a zero with coefficient $\varepsilon=\varepsilon_{-}= -2\alpha$, taking $x\rightarrow a$  and the transformation  $\Phi_{1}=z^{\alpha+1}w$  in \eqref{Phi1P34} we arrive at 
\begin{equation}\label{RBHE3}
\left\{\begin{aligned}
&\frac{\mathrm{d}^{2}w}{\mathrm{d}z^{2}}+\frac{2(\alpha+1)}{z}\frac{\mathrm{d}w}{\mathrm{d}z}-\left(z+a+\frac{q}{z}\right)w=0, \\
&q=\lim_{x\rightarrow a}\frac{\mathrm{d}}{\mathrm{d}x}\log\tau(x)=b.
\end{aligned}\right.
\end{equation}

\subsection{Isomonodromy set of  accessory parameters of RBHE}

Thus,
we  see that  RBHE can be obtained  as a limit of the first row of  the isomonodromy family of  $\Phi(z,x)$   when  $x\to a$, $a$ being a zero or pole of  the  Painlev\'e XXXIV transcendents corresponding to the same monodromy data as RBHE.  While  the accessory parameters $(a,q)$ are expressed in terms of the parameters in the  Taylor or Laurent expansion  of the  PXXXIV   transcendents. As a consequence, we have the following description of the isomonodromy    accessory parameters  set of RBHE.


\begin{thm}\label{thm:IsoSetRBHE}
There is a discrete set of pairs of accessory parameters $(a_n,q_n)$  such that the RBHE  \eqref{int:RBHE} corresponding to these parameters has the same monodromy data \eqref{eq:RBHEMonDat} as the original
equation with the parameters $a$ and $q$.  This set coincides with the set of parameters $(a_n, b_n)$ in the Taylor expansion  \eqref{P34ExpandAtzero} near the zeros $a_n$ of the unique solution  of the PXXXIV equation \eqref{P34} with the same monodromy data \eqref{eq:RBHEMonDat} as the  RBHE.
\end{thm}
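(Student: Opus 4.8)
The plan is to argue exactly as in the proof of Theorem~\ref{thm:IsoSetRTHE}, but with the Painlev\'e~XXXIV Lax pair \eqref{LaxPairP34} in place of the Painlev\'e~I one and with a zero of the transcendent in place of a pole. Fix the parameter $2\alpha$ with $2\alpha\notin\mathbb{Z}$ (as required for the local analysis at $z=0$) and prescribe accessory parameters $a$ and $q$ in the RBHE \eqref{int:RBHE}. Its monodromy data is extracted from the normalized solutions $Y_k$ at $z=\infty$, linked by the Stokes matrices \eqref{eq:RBHE-Stokes1}--\eqref{eq:RBHE-StokesM}, together with the local behavior \eqref{eq:RBHE-asy-O} at $z=0$; eliminating the connection matrix $E$ through the cyclic relation \eqref{eq:RBHE-CyclicCon} yields the surface \eqref{eq:RBHEMonDat}. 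On the Painlev\'e side, let $y(x)$ be the unique PXXXIV transcendent carrying this monodromy data (via the Riemann--Hilbert correspondence for PXXXIV, equivalently the Flaschka--Newell form of PII), and let $a$ be one of its zeros; near $a$, $y$ has the Taylor expansion \eqref{P34ExpandAtzero} with $\varepsilon=\pm2\alpha$, and one keeps only the zeros with $\varepsilon=\varepsilon_+=2\alpha$, which are precisely those producing an RBHE with the prescribed parameter $2\alpha$ (compare \eqref{RBHE2} with \eqref{RBHE3}). The free Taylor coefficient $b$ at such a zero is then fixed by $b=q$, in accordance with the second line of \eqref{RBHE2}.

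Next, consider the isomonodromic family $\Phi_k(z,x)$ solving \eqref{LaxPairP34} and normalized by \eqref{eq:P34-infty} at $z=\infty$ and \eqref{eq:P34-zero} at $z=0$; its Stokes matrices $\hat S_k$ and connection data are $x$-independent and correspond to \eqref{eq:RBHEMonDat}. By the reduction of \eqref{Phi1P34} performed above, after the gauge transformation \eqref{P34PhiTransf} the first row of $\Phi_k(z,x)$ tends, as $x\to a$, to a solution of the RBHE \eqref{int:RBHE} with accessory parameters $a$ and $q=b$. Since the $\hat S_k$ are independent of $x$ and have the same triangular structure as the Stokes matrices $S_k$ of the RBHE, letting $x\to a$ forces $S_k=\hat S_k$ and the matching of the local data at $z=0$; hence the limiting RBHE carries exactly the monodromy data \eqref{eq:RBHEMonDat}. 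This sets up the bijection $(a,q)\leftrightarrow(a,b)$ with $q=b$ between the accessory parameters of the RBHE and the zero-parameters of the PXXXIV solution with that monodromy data. Because a PXXXIV transcendent is meromorphic and not identically zero, its zeros are isolated, so the set of pairs $(a_n,q_n)$ is discrete, which completes the proof.

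The step I expect to be the main obstacle is the limit $x\to a$ itself. In contrast with the RTHE case, where the apparent singularity $z=y(x)$ escapes to the irregular point at infinity, here $z=y(x)$ collides with the \emph{fixed regular} singularity $z=0$ as $x$ approaches a zero of $y$; one must verify that the gauge factor $z^{\alpha}$ in \eqref{P34PhiTransf} absorbs this coalescence exactly, i.e.\ that the $\alpha(\alpha-1)/z^{2}$ contribution arising in $P(z,x)$ from the merging of the two singular points cancels against the $\alpha(\alpha-1)/z^{2}$ produced by the transformation, leaving precisely the RBHE \eqref{int:RBHE}, and that the resulting convergence of $\Phi_k(z,x)$ is uniform enough in each Stokes sector and near $z=0$ to transfer the Stokes multipliers and the connection matrix to the limit. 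A secondary point needing attention is the $\varepsilon=\pm2\alpha$ dichotomy at zeros: one should confirm that the PXXXIV solution in question has infinitely many zeros with $\varepsilon=+2\alpha$, so that the isomonodromy set $(a_n,q_n)$ is genuinely infinite rather than being exhausted by zeros of the other type, which correspond instead to RBHEs with the shifted parameter in \eqref{RBHE3}.
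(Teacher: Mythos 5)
Your proposal is correct and follows exactly the route the paper intends: the paper omits an explicit proof of Theorem~\ref{thm:IsoSetRBHE}, stating that it "can be proved in the same manner" as Theorem~\ref{thm:IsoSetRTHE}, and your argument is precisely that adaptation (PXXXIV Lax pair in place of PI, zeros with $\varepsilon=\varepsilon_+=2\alpha$ in place of poles, the identification $q=b$ from \eqref{RBHE2}, $x$-independence of the Stokes data, and discreteness of the zeros). The coalescence issue you flag is resolved by the computation already carried out in the reduction: the term $-y'/(2z(z-y))\to-\alpha/z^{2}$ combines with $\alpha^{2}/z^{2}$ to give $\alpha(\alpha-1)/z^{2}$, which is exactly cancelled by the gauge factor $z^{\alpha}$ in \eqref{P34PhiTransf}.
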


\section{ Accessory parameters of DHE }\label{sec:DHE}

\subsection{Monodromy of DHE}

Consider the DHE \eqref{int:DHE} with  the accessory parameters $a$ and $q$ and the fixed parameters
$\gamma$ and $p$.
Let $\Omega_k^{(\infty)}$, $k=1,2$, be the Stokes sectors
\begin{equation}\label{eq:DHEStokesSecInf} \Omega_k^{(\infty)}=\left\{z\in \mathbb{C}:~~\frac{\pi}{2}(-2k+1)<\arg z<\frac{\pi}{2}(-2k+5)\right\}.
\end{equation}
There exist unique linear independent solutions of \eqref{int:DHE}, namely $Y_k^{(\infty)}=(y_{k1}^{(\infty)}, y_{k2}^{(\infty)})$, which satisfy the normalized asymptotic behavior as $z\to \infty$
\begin{equation}\label{eq:DHE-infty}Y_k^{(\infty)}(z)\sim e^{-\frac{z}{4}}z^{-\frac{1}{2}\theta_0}\left(1,~ 1 /z\right) e^{\frac{z}{4}\sigma_3}z^{-\frac{1}{2}\theta_{\infty}\sigma_3}, \quad z\in \Omega_k^{(\infty)}, \end{equation}
for $k=1,2$. Here  the parameters $\theta_0=\gamma-1$ and $\theta_{\infty}=4p-\gamma+1$. 
The solutions  are related to each other  through the Stokes matrices
\begin{equation}\label{eq:DHE-StokesInf1}Y_2^{(\infty)}(z)
=Y_1^{(\infty)}(z)S_1^{(\infty)},\quad  S_1^{(\infty)}=\begin{pmatrix}
    1 & 0 \\
    s_1^{(\infty)} & 1
\end{pmatrix}, \end{equation}
\begin{equation}\label{eq:DHE-StokesInf2}Y_1^{(\infty)}(ze^{2\pi i})e^{\pi i\theta_{0}}e^{\pi i\theta_{\infty}\sigma_3}=Y_2^{(\infty)}(z)S_2^{(\infty)},\quad S_2^{(\infty)}=\begin{pmatrix}
    1 & s_2^{(\infty)} \\
  0 & 1
\end{pmatrix}. \end{equation}
Parallelly,  there exist unique linear independent solutions of  \eqref{int:DHE} determined by the  asymptotic behavior as $z\to 0$
\begin{equation}\label{eq:DHE-O}Y_k^{(0)}(z)\sim e^{-\frac{a^2}{4z}}z^{-\frac{1}{2}\theta_0}(1,~z) e^{\frac{a^2}{4z}\sigma_3}z^{\frac{1}{2}\theta_{0}\sigma_3}, \quad z\in \Omega_k^{(0)}, \end{equation}
where  $\Omega_k^{(0)}$ for $k=1,2$  are the Stokes sectors
\begin{equation}\label{eq:DHEStokesSecO} \Omega_k^{(0)}=\left\{z\in \mathbb{C}: ~~\frac{\pi}{2}(-2k+1)<\arg \left(  a^2 / z\right)<\frac{\pi}{2}(-2k+5)\right\}.
\end{equation}The solutions  are related to each other  by the Stokes matrices
\begin{equation}\label{eq:DHE-StokesO1}Y_2^{(0)}(z)=Y_1^{(0)}(z)S_1^{(0)},\quad  S_1^{(0)}=\begin{pmatrix}
    1 & 0 \\
    s_1^{(0)} & 1
\end{pmatrix}, \end{equation}
\begin{equation}\label{eq:DHE-StokesO2}Y_1^{(0)}(ze^{-2\pi i})e^{-\pi i\theta_{0}}e^{\pi i\theta_{0}\sigma_3}=Y_2^{(0)}(z)S_2^{(0)},\quad S_2^{(0)}=\begin{pmatrix}
    1 & s_2^{(0)} \\
  0 & 1
\end{pmatrix}. \end{equation}
Moreover,  the solutions $Y_1^{(\infty)}(z)$ and $Y_2^{(0)}(z)$ are connected  by
\begin{equation}\label{eq:DHE-E}Y_1^{(\infty)}(z)=Y_2^{(0)}(z) E,\end{equation}
where $E$ is some invertible constant matrix.
We define the monodromy data of  \eqref{int:DHE}  by the set
\begin{equation}\label{eq:DHEMonDat}\left\{e^{\pi i\theta_{\infty}},e^{\pi i\theta_0}; S^{(\infty)}_1, S^{(\infty)}_2; S^{(0)}_1, S^{(0)}_2; E\right\} .\end{equation}
Combining \eqref{eq:DHE-StokesInf1}, \eqref{eq:DHE-StokesInf2}, and \eqref{eq:DHE-StokesO1}-\eqref{eq:DHE-E}, we have the cyclic condition
 \begin{equation}\label{eq:DHE-CyclicCon}S_1^{(\infty)}S_2^{(\infty)}e^{-\pi i\theta_{\infty}\sigma_3}= E^{-1}\left(S_1^{(0)}\right)^{-1}e^{\pi i\theta_{0}\sigma_3} \left(S_2^{(0)}\right )^{-1} E. \end{equation}
The condition implies that the Stokes multipliers satisfy the equation
 \begin{equation}\label{eq:DHE-StokesEq}
e^{\pi i\theta_{\infty}}s_1^{(\infty)}s_2^{(\infty)}+2\cos(\pi \theta_{\infty})= e^{\pi i\theta_{0}} s_1^{(0)}s_2^{(0)}+2\cos(\pi \theta_{0}).\end{equation}

\subsection{Isomonodromy deformation and PIII equation} \label{IsoDefPIII}

To study the isomonodromy deformation of the  RTHE equation, it is  convenient to consider a $2\times 2$ matrix  system, which has two irregular singular points, both of rank $1$, at infinity and at the origin. The isomonodromy deformation of a matrix  system with such a singular pattern    has been considered in \cite{FIKN,Jim2}.

Recall the following Lax pair for PIII  (see \cite[ (C.18)-(C.20)]{Jim2}):
\begin{equation}\label{LaxPairP3}
\left\{\begin{aligned}
&\frac{\partial\Phi(z,x)}{\partial z}=A(z,x)\Phi(z,x),\\
&\frac{\partial\Phi(z,x)}{\partial x}=B(z,x)\Phi(z,x),
\end{aligned}\right.
\end{equation}
where
\begin{align*}
&A(z,x)=
\begin{pmatrix}
\frac{x}{4}-\frac{\theta_{\infty}}{2z}+\frac{4v-x}{4z^{2}} & \frac{r}{z}-\frac{uv}{z^{2}}\\
\frac{s}{z}+\frac{2v-x}{2uz^{2}} & -\frac{x}{4}+\frac{\theta_{\infty}}{2z}-\frac{4v-x}{4z^{2}}
\end{pmatrix},\\
&B(z,x)=
\begin{pmatrix}
\frac{z}{4}-\frac{4v-x}{4xz} & \frac{r}{x}+\frac{uv}{xz}\\
\frac{s}{x}-\frac{2v-x}{2xuz} & -\frac{z}{4}+\frac{4v-x}{4xz}
\end{pmatrix}.
\end{align*}
If we set $y=-r/uv$ and
$$\frac{\theta_{0}}{2}=\frac{v}{x}\left[\frac{\theta_{\infty}}{2}\left(4-\frac{x}{v}\right)+\frac{x-2v}{uv}r+2us\right],$$
then the compatibility condition of the above Lax pair reads
\begin{equation}\label{CCP3}
\left\{
\begin{aligned}
&x\frac{\mathrm{d}y}{\mathrm{d}x}=(4v-x)y^{2}+(2\theta_{\infty}-1)y+x,\\
&x\frac{\mathrm{d}v}{\mathrm{d}x}=-4yv^{2}+(2xy-2\theta_{\infty}+1)v+\frac{1}{2}(\theta_{0}+\theta_{\infty})x,\\
&x\frac{\mathrm{d}u}{\mathrm{d}x}=u\left(-\frac{x}{2v}(\theta_{0}+\theta_{\infty})-xy+\theta_{\infty}\right),
\end{aligned}
\right.
\end{equation}
which implies the PIII equation
\begin{align*}
\frac{\mathrm{d}^{2}y}{\mathrm{d}x^{2}}=\frac{1}{y}\left(\frac{\mathrm{d}y}{\mathrm{d}x}\right)^{2}-\frac{1}{x}\frac{\mathrm{d}y}{\mathrm{d}x}
+\frac{2\theta_{0}y^{2}+2-2\theta_{\infty}}{x}+y^{3}-\frac{1}{y}.
\end{align*}
The PIII $\tau$-function is defined by (see \cite[ (C.27)]{Jim2})
\begin{equation}\label{TauP3}
x\frac{\mathrm{d}}{\mathrm{d}x}\log\tau(x)=2y^{2}v^{2}+\left(-xy^{2}+2\theta_{\infty}y+x\right)v-\frac{(\theta_{0}+\theta_{\infty})xy}{2}
-\frac{x^{2}}{4}-\frac{\theta_{0}^{2}-\theta_{\infty}^{2}}{4}.\end{equation}
Using \eqref{CCP3} and \eqref{TauP3} to eliminate $v(x)$, we have
\begin{equation}\label{tauP3y}
x\frac{\mathrm{d}}{\mathrm{d}x}\log\tau(x)
=\frac{ x^2 y'^{2}}{8y^{2}}+\frac{xy'}{4y}-\frac{x^{2}}{8y^{2}}
-\frac{x^{2}y^{2}}{8}-\frac{\theta_{\infty}x}{2y}-\frac {\theta_0 x y} 2+\frac{1}{8}-\frac {\theta_0^2+\theta_\infty^2} 4.
\end{equation}

There exist unique solutions $\Phi_k^{(\infty)}(z,x)$ of the equation \eqref{LaxPairP3}, which satisfy the  normalized asymptotic behavior as $z\to \infty$
\begin{equation}\label{eq:DCHE-infty}\Phi_k^{(\infty)}(z,x)=(I+O(1/z)) e^{\frac{xz}{4}\sigma_3}z^{-\frac{1}{2}\theta_{\infty}\sigma_3}, \quad xz\in \Omega_k^{(\infty)}, \end{equation}
where $\Omega_k^{(\infty)}$ for $k=1,2$ are the Stokes sectors defined in \eqref{eq:DHEStokesSecInf}. The solutions are  related to each other by the Stokes matrices
\begin{equation}\label{eq:PIII-StokesInf1}\Phi_2^{(\infty)}(z)=\Phi_1^{(\infty)}(z)\hat{S}_1^{(\infty)}, \quad \hat{S}_1^{(\infty)}=\begin{pmatrix}
    1 & 0 \\
    \hat{s}_1^{(\infty)} & 1
\end{pmatrix}, \end{equation}
\begin{equation}\label{eq:PIII-StokesInf2}\Phi_1^{(\infty)}(ze^{2\pi i})e^{\pi i\theta_{\infty}\sigma_3}=\Phi_2^{(\infty)}(z)\hat{S}_2^{(\infty)}, \quad \hat{S}_2^{(\infty)}=\begin{pmatrix}
    1 & \hat{s}_2^{(\infty)} \\
  0 & 1
\end{pmatrix}. \end{equation}
Similarly, there  exist  unique solutions $\Phi_k^{(0)}(z,x)$ of the equation \eqref{LaxPairP3} such that
\begin{equation}\label{eq:DCHE-0}\Phi_k^{(0)}(z,x)=G_k(x)(I+O(z)) e^{\frac{x}{4z}\sigma_3}z^{\frac{1}{2}\theta_{0}\sigma_3}, \quad z\in \Omega_k^{(0)},~~z\to 0, \end{equation}
where $G_k(x)$ are matrices independent of $z$, and the sectors $\Omega_k^{(0)}$, $k=1,2$, are defined in \eqref{eq:DHEStokesSecO} with $\arg (a^2/z)$ replaced by  $\arg (x /z)$.
These solutions are  related to each other by the Stokes matrices
\begin{equation}\label{eq:PIII-StokesO1}\Phi_2^{(0)}(z)=\Phi_1^{(0)}(z)\hat{S}_1^{(0)}, \quad \hat{S}_1^{(0)}=\begin{pmatrix}
    1 & 0 \\
    \hat{s}_1^{(0)}& 1
\end{pmatrix}, \end{equation}
\begin{equation}\label{eq:PIII-StokesO2}\Phi_1^{(0)}(ze^{-2\pi i})e^{\pi i\theta_{0}\sigma_3}=\Phi_2^{(0)}(z)\hat{S}_2^{(0)}, \quad \hat{S}_2^{(0)}=\begin{pmatrix}
    1 & \hat{s}_2^{(0)} \\
  0 & 1
\end{pmatrix}. \end{equation}
Moreover,  the solutions $\Phi_1^{(\infty)}(z)$ and $\Phi_2^{(0)}(z)$ are connected  through
\begin{equation}\label{eq:PIII-E}\Phi_1^{(\infty)}(z)=\Phi_2^{(0)}(z) \hat{E},\end{equation}
where $\hat{E}$ is some invertible constant matrix.
The monodromy data of the system  \eqref{LaxPairP3} is defined as the set
\begin{equation}\label{eq:P3MonDat}\left\{e^{\pi i\theta_{\infty}},e^{\pi i\theta_0}; \hat{S}^{(\infty)}_1,  \hat{S}^{(\infty)}_2; \hat{S}^{(0)}_1, \hat{S}^{(0)}_2;\hat{E}\right \} .\end{equation}
Applying \eqref{eq:PIII-StokesInf1}, \eqref{eq:PIII-StokesInf2}, and \eqref{eq:PIII-StokesO1}-\eqref{eq:PIII-E}, we find the cyclic condition for the monodromy  data
 \begin{equation}\label{eq:PIII-CyclicCon}\hat{S}_1^{(\infty)}\hat{S}_2^{(\infty)}e^{-\pi i\theta_{\infty}\sigma_3}= \hat{E}^{-1}\left(\hat{S}_1^{(0)}\right)^{-1}e^{\pi i\theta_{0}\sigma_3} \left(\hat{S}_2^{(0)}\right )^{-1}\hat{E}. \end{equation}
Thus, the Stokes multipliers satisfy the equation
 \begin{equation}\label{eq:PIII-StokesEq}
e^{\pi i\theta_{\infty}}\hat{s}_1^{(\infty)}\hat{s}_2^{(\infty)}+2\cos(\pi \theta_{\infty})= e^{\pi i\theta_{0}}\hat{s}_1^{(0)}\hat{s}_2^{(0)}+2\cos(\pi \theta_{0}).\end{equation}

\subsection{Reduction of the  linear system   for PIII to  DHE }\label{sec:DHEP3}

In this section, we will show that the DHE  \eqref{int:DHE} can be obtained  as a limit of the  linear system \eqref{Phi1}  associated with   the Lax pair  for the Painlev\'{e} III equation as $x\to a$, where $a$ is  one of the poles or zeros of the solution to the
Painlev\'e  III equation.

Substituting the elements of $A(z,x)$ into \eqref{Phi1} gives
\begin{equation}\label{Phi1P3}
\frac{\mathrm{d}^{2}\Phi_{1}}{\mathrm{d}z^{2}}-\left(\frac{1}{z+1/y}-\frac{2}{z}\right)\frac{\mathrm{d}\Phi_{1}}{\mathrm{d}z}-Q(z,x)\Phi_{1}=0,
\end{equation}
where
\begin{align*}
Q(z,x)&=\frac{x^{2}}{16}+\frac{(2-\theta_{\infty})x}{4z}-\frac{\theta_{0}x}{4z^{3}}+\frac{x^{2}}{16z^{4}}-\frac{1}{z+1/y}\left(\frac{x}{4}-\frac{\theta_{\infty}}{2z}
+\frac{v-\frac{x}{4}}{z^{2}}\right)\\
&\ \ \ +\frac{1}{z^{2}}\left[y^{2}v^{2}+\frac{1}{2}(-xy^{2}+2\theta_{\infty}y+x)v-\frac{1}{4}(\theta_{0}+\theta_{\infty})xy
-\frac{x^{2}}{8}+\frac{\theta_{\infty}^{2}}{4}-\frac{\theta_{\infty}}{2}\right] \\ &=\frac{x^{2}}{16}+\frac{(2-\theta_{\infty})x}{4z}-\frac{\theta_{0}x}{4z^{3}}+\frac{x^{2}}{16z^{4}}-\frac{1}{z+1/y}\left(\frac{x}{4}-\frac{\theta_{\infty}}{2z}
+\frac{v-\frac{x}{4}}{z^{2}}\right)\\
&\ \ \ +\frac{1}{z^{2}}\left[\frac{1}{2}x\frac{\mathrm{d}}{\mathrm{d}x}\log\tau(x)+\frac{\theta_{0}^{2}+\theta_{\infty}^{2}}{8}-\frac{\theta_{\infty}}{2}\right].
\end{align*}

To obtain DHE, we eliminate the extra singularity  $z=-1/y$ in the above equation by considering the poles and zeros of $y$.
It is known that $y$ admits the following Laurent expansion near a pole
\begin{equation}\label{PIIIExpandAtpole}
y(x)=\frac{\varepsilon}{x-a}-\frac{2\theta_{0}+\varepsilon}{2a}+b(x-a)+O((x-a)^{2}),\end{equation}
and the Taylor expansion near a zero
\begin{equation}\label{PIIIExpandAtzero}
y(x)=\sigma(x-a)+\frac{\sigma-2+2\theta_{\infty}}{2a}(x-a)^{2}+b(x-a)^{3}+O((x-a)^{4}),
\end{equation}
where $\varepsilon=\varepsilon_{\pm}=\pm1$, $\sigma=\sigma_{\pm}=\pm1$, $a\neq0 $ and $b$ is arbitrary.

In view of the first equation of compatibility condition \eqref{CCP3}, \eqref{PIIIExpandAtpole} and \eqref{PIIIExpandAtzero}, we obtain that
\begin{equation}\label{vPIIIExpand}
v(x)=\left\{\begin{aligned}
  &O(x-a), & \varepsilon=\varepsilon_{+},\\
  &\textstyle\frac{a}{2}+O(x-a),   & \varepsilon=\varepsilon_{-},\\
  &O(1), & \sigma=\sigma_{+},\\
  &\textstyle-\frac{a}{2}(x-a)^{-2}+O(1),   & \sigma=\sigma_{-}.
\end{aligned} \right.
\end{equation}

Thus, taking $x\rightarrow a$ in \eqref{Phi1P3} and setting
$$
\Phi_{1}=z^{\frac{\theta_{0}}{2}}e^{\frac{a}{4}(z+\frac{1}{z})}w,\ \ \ s=az,
$$
we obtain the DHEs
\begin{equation}\label{DHE1}
\left\{
\begin{aligned}
&\frac{\mathrm{d}^{2}w}{\mathrm{d}s^{2}}+\left(\frac{1}{2}+\frac{1+\theta_{0}}{s}-\frac{a^{2}}{2s^{2}}\right)\frac{\mathrm{d}w}{\mathrm{d}s}+\frac{ps-q}{s^{2}}w=0,\\
&p=\frac{1}{4}(\theta_{\infty}+\theta_{0}),\\
&q=\lim_{x\rightarrow a}\frac{1}{2}x\frac{\mathrm{d}}{\mathrm{d}x}\log\tau(x)+\frac{a^{2}}{8}-\frac{\theta_{0}^{2}-\theta_{\infty}^{2}}{8}\\
&~~=-\frac{3}{8}a^2 b+\frac{a^2}{8}+\frac{1}{32}(2\theta_{0}+1)(2\theta_{0}+5) 
\end{aligned}\right.
\end{equation}
for $\varepsilon=\varepsilon_{+}$,
\begin{equation}\label{DHE2}
\left\{
\begin{aligned}
&\frac{\mathrm{d}^{2}w}{\mathrm{d}s^{2}}+\left(\frac{1}{2}+\frac{2+\theta_{0}}{s}-\frac{a^{2}}{2s^{2}}\right)\frac{\mathrm{d}w}{\mathrm{d}s}+\frac{ps-q}{s^{2}}w=0,\\
&p=\frac{1}{4}(\theta_{\infty}+\theta_{0}),\\
&q=\lim_{x\rightarrow a}\frac{1}{2}x\frac{\mathrm{d}}{\mathrm{d}x}
\log\tau(x)+\frac{a^{2}}{8}-\frac{\theta_{0}^{2}-
\theta_{\infty}^{2}}{8}-\frac{\theta_{0}+\theta_{\infty}}{2} \end{aligned}
\right.
\end{equation}
for $\sigma=\sigma_{+}$, and
\begin{equation}\label{DHE3}
\left\{
\begin{aligned}
&\frac{\mathrm{d}^{2}w}{\mathrm{d}s^{2}}+\left(\frac{1}{2}+\frac{2+\theta_{0}}{s}-\frac{a^{2}}{2s^{2}}\right)\frac{\mathrm{d}w}{\mathrm{d}s}+\frac{ps-q}{s^{2}}w=0,\\
&p=\frac{1}{4}(\theta_{\infty}+\theta_{0}+2),\\
&q=\lim_{x\rightarrow a}\frac{1}{2}\left(x\frac{\mathrm{d}}{\mathrm{d}x}\log\tau(x)-\frac{a}{x-a}\right)+\frac{a^{2}}{8}-\frac{\theta_{0}^{2}-\theta_{\infty}^{2}}{8}-\frac{\theta_{0}}{2}-\frac{3}{4}
\end{aligned}
\right.
\end{equation}
for $\sigma=\sigma_{-}$. For $\varepsilon=\varepsilon_{-}$ in \eqref{Phi1P3}, taking $x\rightarrow a$ and setting
$$
\Phi_{1}=z^{-\frac{\theta_{0}}{2}}e^{\frac{a}{4}(z-\frac{1}{z})}w,\ \ \ s=az,
$$
we obtain the DHE
\begin{equation}\label{DHE4}
\left\{
\begin{aligned}
&\frac{\mathrm{d}^{2}w}{\mathrm{d}s^{2}}+\left(\frac{1}{2}+\frac{1-\theta_{0}}{s}+\frac{a^{2}}{2s^{2}}\right)\frac{\mathrm{d}w}{\mathrm{d}s}+\frac{ps-q}{s^{2}}w=0,\\
&p=\frac{1}{4}(\theta_{\infty}-\theta_{0}),\\
&q=\lim_{x\rightarrow a}\frac{1}{2}x\frac{\mathrm{d}}{\mathrm{d}x}\log\tau(x)-\frac{a^{2}}{8}-\frac{\theta_{0}^{2}-\theta_{\infty}^{2}}{8}.
\end{aligned}
\right.
\end{equation}

%
%

\subsection{Isomonodromy set of  accessory parameters of DHE}

Consider the DHE  \eqref{int:DHE} with  the accessory parameters $a$ and $q$ and the fixed parameters
$\gamma$ and $p$. 
We  have shown that  the DHE can be obtained  as a limit of the first row of  the isomonodromy family of  $\Phi(z,x)$,  when $x$ tends to  one of the poles or zeros of  the  Painlev\'e III transcendents corresponding to the same monodromy data of the DHE.  While  the accessory parameters $(a,q)$ are expressed in terms of the parameters in the Laurent or Taylor expansion  of the  PIII transcendents with the parameters $\theta_0=\gamma-1$ and $\theta_{\infty}=4p-\gamma+1$. As a result, we have the following description of the isomonodromy set of  accessory parameters of DHE. 

\begin{thm}\label{thm:IsoSetDHE}
There is a discrete set of pairs of accessory parameters $(a_n,q_n)$  such that the DHE  \eqref{int:DHE}
corresponding to these parameters has the same monodromy data as the original
equation with the parameters $a$ and $q$. Under the bijection given in the last equation of \eqref{DHE1},  this set coincides with the set of parameters $(a_n, b_n)$ in the  Laurent expansion near the poles $a_n$ of the unique solution  of PIII with the same monodromy data \eqref{eq:DHEMonDat} as the  DHE.
\end{thm}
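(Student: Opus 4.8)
The plan is to repeat the three-step argument used for Theorems \ref{thm:IsoSetRTHE} and \ref{thm:IsoSetTHE}, now adapted to the richer monodromy structure of the DHE, which carries a rank-one irregular point at both the origin and infinity and hence two pairs of Stokes matrices $S_k^{(\infty)}$, $S_k^{(0)}$ together with a connection matrix $E$.

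First I would fix the DHE \eqref{int:DHE} with accessory parameters $a$, $q$ and fixed parameters $\gamma$, $p$, set $\theta_0=\gamma-1$ and $\theta_\infty=4p-\gamma+1$, and read off its monodromy data \eqref{eq:DHEMonDat}, subject to \eqref{eq:DHE-StokesEq}, from the solutions $Y_k^{(\infty)}$, $Y_k^{(0)}$ normalized by \eqref{eq:DHE-infty}, \eqref{eq:DHE-O}. Then there is a unique PIII transcendent $y(x)$ with these parameters $\theta_0$, $\theta_\infty$ realizing exactly this monodromy data (cf.\ \cite{FIKN,Jim2}); I take $a$ to be one of its poles and $b$ the free coefficient in the Laurent expansion \eqref{PIIIExpandAtpole} there, in the branch $\varepsilon=\varepsilon_+$. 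The associated isomonodromic family $\Phi_k^{(\infty)}(z,x)$, $\Phi_k^{(0)}(z,x)$ of \eqref{LaxPairP3}, normalized by \eqref{eq:DCHE-infty}, \eqref{eq:DCHE-0}, has $x$-independent Stokes matrices $\hat S_k^{(\infty)}$, $\hat S_k^{(0)}$ and connection matrix $\hat E$ of exactly the triangular shapes occurring for the DHE.

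Next I would apply the reduction of Section \ref{sec:DHEP3}: letting $x\to a$, after the gauge transformation $\Phi_1=z^{\theta_0/2}e^{\frac{a}{4}(z+1/z)}w$ and the rescaling $s=az$, the first-row equation \eqref{Phi1P3} degenerates to the DHE \eqref{DHE1}, whose accessory parameter is $q=-\frac{3}{8}a^2b+\frac{a^2}{8}+\frac{1}{32}(2\theta_0+1)(2\theta_0+5)$ --- an affine, hence invertible, relation between $q$ and $b$ for $a\neq0$, i.e.\ the bijection named in the statement. Because this limit is an explicit gauge transformation followed by the mere removal of the apparent singularity $z=-1/y$, it alters neither the Stokes data at $0$ and $\infty$ nor the connection matrix; thus the DHE \eqref{DHE1} so produced carries the same monodromy data \eqref{eq:DHEMonDat} as the $x$-independent data of the Lax-pair family, hence as the original DHE. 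Letting $a_n$ run over the poles of $y(x)$ and recording the coefficients $b_n$ then yields, through the displayed bijection, the asserted set $\{(a_n,q_n)\}$; each such pair arises this way because $y(x)$ is the unique transcendent with the given monodromy data, and the set is discrete since a PIII transcendent has infinitely many poles, discrete in $\mathbb{C}$.

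The step I expect to be the main obstacle is confirming that the confluence limit $x\to a$ preserves the \emph{entire} monodromy data of the DHE and not merely the Stokes multipliers at infinity: one must propagate the Stokes matrices at the origin and the connection matrix $E$ through the gauge transformation and the rescaling $s=az$, checking that the sector labels of \eqref{eq:DHEStokesSecInf}, \eqref{eq:DHEStokesSecO} and the normalizing prefactors in \eqref{eq:DHE-infty}, \eqref{eq:DHE-O} are precisely those produced from \eqref{eq:DCHE-infty}, \eqref{eq:DCHE-0} by the transformation of Section \ref{sec:DHEP3}, and that among the four reductions \eqref{DHE1}--\eqref{DHE4} it is exactly the pole branch $\varepsilon=\varepsilon_+$ (with $\theta_0=\gamma-1$) that matches the canonical DHE \eqref{int:DHE}. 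Once this is in hand --- together with the standard $x$-independence of the monodromy along the isomonodromic family --- the argument closes exactly as for Theorem \ref{thm:IsoSetRTHE}.
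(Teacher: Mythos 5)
Your proposal is correct and follows essentially the same route as the paper, which proves Theorem \ref{thm:IsoSetDHE} only by reference to the detailed arguments given for Theorems \ref{thm:IsoSetRTHE} and \ref{thm:IsoSetHE}: uniqueness of the PIII transcendent with the prescribed monodromy data, the reduction of Section \ref{sec:DHEP3} at a pole in the branch $\varepsilon=\varepsilon_{+}$, the affine (hence invertible for $a\neq0$) relation between $q$ and $b$ in \eqref{DHE1}, $x$-independence of the monodromy, and discreteness of the poles. The compatibility checks you single out as the main obstacle (sector labels, normalizations, and the rescaling $s=az$ matching \eqref{eq:DHE-infty}, \eqref{eq:DHE-O} with \eqref{eq:DCHE-infty}, \eqref{eq:DCHE-0}) are exactly the details the paper leaves implicit, and they do go through.
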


\section{Accessory  parameters of BHE}\label{sec:BHE}

\subsection{Monodromy of BHE}
Consider the BHE  \eqref{int:BHE} with  the accessory parameters $a$ and $q$, the other fixed parameters 
$\gamma$ and $p$.
There exist unique  solutions of  \eqref{int:BHE} which satisfy the normalized asymptotic behavior as $z\to \infty$
\begin{equation}\label{eq:BHE-infty}Y_k(z)\sim e^{-\frac{z^2}{2}-az}z^{-(\theta_0+1)}(1,1) e^{\frac{z^2+2az}{2}\sigma_3}z^{-\theta_{\infty}\sigma_3}, \quad z\in \Omega_k, \end{equation}
where the parameters $\theta_0=(\gamma-1)/2$, $\theta_{\infty}=(p-\gamma-1)/2$ and 
the Stokes sectors $\Omega_k$, $k=1,2,3,4$  are given by
\begin{equation}\label{eq:BHEStokesSec} \Omega_k=\left\{z\in \mathbb{C}: ~~\frac{\pi}{4}(2k-5)<\arg z<\frac{\pi}{4}(2k-1)\right\}.
\end{equation}

The solutions are  related to each other by the Stokes matrices
\begin{equation}\label{eq:BHE-Stokes1}Y_{k+1}(z)=Y_k(z)S_k, \quad  k=1,2,3, \end{equation}
\begin{equation}\label{eq:P4-Stokes2}Y_1(z)=Y_4(ze^{2\pi i})S_4e^{2\pi i\theta_0}e^{2\pi i\theta_{\infty}\sigma_3}, \end{equation}
where
\begin{equation}\label{eq:BHE-StokesM} S_{2k-1}=\begin{pmatrix}
    1 & 0 \\
    s_{2k-1} & 1
\end{pmatrix}, \quad S_{2k}=\begin{pmatrix}
    1 & s_{2k} \\
  0 & 1
\end{pmatrix}, \quad k=1,2. \end{equation}
The solution $Y_1(z)$  has the asymptotic behavior  as $z\to 0$
\begin{equation}\label{eq:BHEZero}Y_1(z) \sim z^{-\theta_0}(1,1)z^{\theta_0\sigma_3}E ,\end{equation}
with  some invertible constant matrix $E$.
The monodromy data of BHE  \eqref{int:BHE} is defined as the set
\begin{equation}\label{eq:BHEMonDat}\left\{e^{2\pi i\theta_{\infty}},e^{2\pi i\theta_0}; S_1,  S_2, S_3, S_4; E\right \} .\end{equation}
By \eqref{eq:BHE-Stokes1}-\eqref{eq:BHEZero}, they satisfy the cyclic  condition
 \begin{equation}\label{eq:BHE-CyclicCon}S_1S_2S_3S_4e^{2\pi i\theta_{\infty}\sigma_3}= E^{-1}e^{-2\pi i\theta_0\sigma_3} E. \end{equation}
 Thus, the Stokes multipliers  $s_k$, $k=1,2,3,4$, satisfy the algebraic equation
 \begin{equation}\label{eq:BHE-StokesEq}
e^{2\pi i\theta_{\infty}}(1+s_2s_3)+e^{-2\pi i\theta_{\infty}}\left [s_1s_4+(1+s_1s_2)(1+s_3s_4)\right ]=2\cos(2\pi \theta_{0}).\end{equation}
It is direct to see that the cyclic condition  \eqref{eq:BHE-CyclicCon} also specifies the connection matrix $E$ to within a left-multiplicative diagonal matrix.

\subsection{Isomonodromy deformation and PIV equation} \label{IsoDefPII}

To study the isomonodromy deformation of the  BHE, it is  convenient to consider a $2\times 2$ matrix  system, which has one irregular singular point of rank 2 at infinity and one regular singularity at the origin. In \cite{FIKN,Jim2}, the isomonodromy deformation of such a matrix  system has been considered.

Consider the following Lax pair for PIV (see \cite[ (C.30)-(C.31)]{Jim2}):
\begin{equation}\label{LaxPairP4}
\left\{\begin{aligned}
&\frac{\partial\Phi(z,x)}{\partial z}=A(z,x)\Phi(z,x),\\
&\frac{\partial\Phi(z,x)}{\partial x}=B(z,x)\Phi(z,x),
\end{aligned}\right.
\end{equation}
where
\begin{align*}
A(z,x)&=
\begin{pmatrix}
z+x+\frac{\theta_{0}-v}{z} & u\left(1-\frac{y}{2z}\right)\\
\frac{2}{u}\left(v-\theta_{0}-\theta_{\infty}+\frac{v^{2}-2\theta_{0}v}{yz}\right) & -z-x-\frac{\theta_{0}-v}{z}
\end{pmatrix},\\
B(z,x)&=
\begin{pmatrix}
z & u\\
\frac{2}{u}(v-\theta_{0}-\theta_{\infty}) & -z
\end{pmatrix}.
\end{align*}
The compatibility condition of the Lax pair gives
\begin{equation}\label{CCP4}
\left\{
\begin{aligned}
\frac{\mathrm{d}u}{\mathrm{d}x}&=-u(y+2x),\\
\frac{\mathrm{d}y}{\mathrm{d}x}&=-4v+y^{2}+2xy+4\theta_{0},\\
\frac{\mathrm{d}v}{\mathrm{d}x}&=-(v-\theta_{0}-\theta_{\infty})y-\frac{2v(v-2\theta_{0})}{y},
\end{aligned}
\right.
\end{equation}
which leads to the PIV equation
\begin{equation}\label{eq:P4}
\frac{\mathrm{d}^{2}y}{\mathrm{d}x^{2}}=\frac{1}{2y}\left(\frac{\mathrm{d}y}{\mathrm{d}x}\right)^{2}+\frac{3}{2}y^{3}+4xy^{2}+2(x^{2}+1-2\theta_{\infty})y-\frac{8\theta_{0}^{2}}{y}.
\end{equation}
The PIV $\tau$-function is defined by (see \cite[(C.35)]{Jim2})
\begin{equation}\label{TauP4}
\frac{\mathrm{d}}{\mathrm{d}x}\log \tau(x)=\frac{2}{y}v^{2}-\bigg(y+2x+\frac{4\theta_{0}}{y}\bigg)v+(\theta_{0}+\theta_{\infty})(y+2x).
\end{equation}

There exist unique solutions $\Phi_k(z,x)$ of the equation \eqref{LaxPairP4} subject to the  normalized asymptotic behavior as $z\to \infty$
\begin{equation}\label{eq:P4-infty}\Phi_k(z,x)=(I+O(1/z)) e^{\frac{z^2+2xz}{2}\sigma_3}z^{-\theta_{\infty}\sigma_3}, \quad z\in \Omega_k, \end{equation}
where   $\Omega_k$, $k=1,2,3,4$,  are the Stokes sectors defined in
\eqref{eq:BHEStokesSec}. Using \eqref{eq:P4-infty}, it is seen that the solutions are  related to each other by the Stokes matrices
\begin{equation}\label{eq:P4-Stokes1}\Phi_{k+1}(z)=\Phi_k(z)\hat{S}_k, \quad  k=1,2,3, \end{equation}
\begin{equation}\label{eq:P4-Stokes2}\Phi_1(z)=\Phi_4(ze^{2\pi i})\hat{S}_4e^{2\pi i\theta_{\infty}\sigma_3}, \end{equation}
where
\begin{equation}\label{eq:P4-StokesM} \hat{S}_{2k-1}=\begin{pmatrix}
    1 & 0 \\
    \hat{s}_{2k-1} & 1
\end{pmatrix}, \quad \hat{S}_{2k}=\begin{pmatrix}
    1 & \hat{s}_{2k} \\
  0 & 1
\end{pmatrix},~~ k=1,2. \end{equation}
The solution $\Phi_1(z,x)$  has the asymptotic behaviors as $z\to 0$
$$\Phi_1(z,x) =\Phi_1^{(0)}(x)(I+O(z))z^{\theta_0\sigma_3}\hat{E},$$
with  some invertible constant matrix $\hat{E}$.
The monodromy data of the system  \eqref{LaxPairP4} is defined as the set
\begin{equation}\label{eq:P4MonDat}\left\{e^{2\pi i\theta_{\infty}},e^{\pi i\theta_0}; \hat{S}_1, \hat{S}_2, \hat{S}_3, \hat{S}_4; \hat{E}\right\} .\end{equation}
They satisfy the cyclic  condition
 \begin{equation}\label{eq:P4-CyclicCon}\hat{S}_1\hat{S}_2\hat{S}_3\hat{S}_4e^{2\pi i\theta_{\infty}\sigma_3}= \hat{E}^{-1}e^{-2\pi i\theta_0\sigma_3} \hat{E}. \end{equation}
 Thus, the Stokes multipliers  $\hat{s}_k$, $k=1,2,3,4$ satisfy
 \begin{equation}\label{eq:P4-StokesEq}
e^{2\pi i\theta_{\infty}}(1+\hat{s}_2\hat{s}_3)+e^{-2\pi i\theta_{\infty}}(\hat{s}_1\hat{s}_4+(1+\hat{s}_1\hat{s}_2)(1+\hat{s}_3\hat{s}_4))=2\cos(2\pi \theta_{0}).\end{equation}
Moreover, the cyclic condition \eqref{eq:P4MonDat} specifies the connection matrices $\hat{E}$ to within a left-multiplicative diagonal matrices; see \cite{FIKN}.

\subsection{Reduction of the  linear system  for PIV to  BHE}\label{sec:BHEP4}

In this subsection, we will derive BHE from  \eqref{Phi1} associated with the isomonodromy system of PIV when the PIV transcendents $y(x)$ tends to zero or infinity.

Substituting  $A(z,x)$ into \eqref{Phi1} gives
\begin{equation}\label{Phi1P4}
\frac{\mathrm{d}^{2}\Phi_{1}}{\mathrm{d}z^{2}}-\left(\frac{1}{z-\lambda}-\frac{1}{z}\right)\frac{\mathrm{d}\Phi_{1}}{\mathrm{d}z}-Q(z,x)\Phi_{1}=0,
\end{equation}
where
\begin{align*}
Q(z,x)&=z^{2}+2xz+x^{2}+2(1-\theta_{\infty})+\frac{\theta_{0}^{2}}{z^{2}}-\frac{1}{z-\lambda}\left(z+x+\frac{\theta_{0}-v}{z}\right)\\
&\ \ \ \ \ \ \ +\frac{1}{z}\left[\frac{2}{y}v^{2}-\bigg(y+2x+\frac{4\theta_{0}}{y}\bigg)v+(\theta_{0}+\theta_{\infty})y+(1+2\theta_{0})x\right]\\
&=z^{2}+2xz+x^{2}+2(1-\theta_{\infty})+\frac{\theta_{0}^{2}}{z^{2}}-\frac{1}{z-\lambda}\left(z+x+\frac{\theta_{0}-v}{z}\right)\\
&\ \ \ \ \ \ \ +\frac{1}{z}\left[\frac{\mathrm{d}}{\mathrm{d}x}\log\tau(x)+(1-2\theta_{\infty})x\right].
\end{align*}

To derive BHE, we need to eliminate the extra singularity $\lambda=y/2$. Thus we  consider the zeros and poles of $y$.
According to  \cite[(1.4)-(1.5)]{Mas}, the solution $y$ of PIV equation admits the  Taylor expansion near a zero
\begin{equation}\label{P4Expandzero}
y(x)=\varepsilon(x-a)+b(x-a)^{2}+O((x-a)^{3}),\end{equation}
 the Laurent expansion near a pole
\begin{equation}\label{P4Expandpole}
y(x)=\sigma(x-a)^{-1}-a+a_{1}(x-a)+b(x-a)^{2}+O((x-a)^{3}),
\end{equation}
where $b$ is arbitrary and the parameters
$$
\varepsilon=\varepsilon_{\pm}=\pm4\theta_{0},\ \ \
\sigma=\sigma_{\pm}=\pm1,\ \ \
a_{1}=\frac{1}{3}\sigma\left(a^{2}-2+4(\theta_{\infty}-\sigma)\right).
$$

Let us first consider the zeros of $y$. It follows from \eqref{CCP4} and \eqref{P4Expandzero}
 that
$$
v(x)=\left\{\begin{aligned}
  &O(x-a), &  \varepsilon=\varepsilon_{+},\\
  &2\theta_{0}+O(x-a),   &  \varepsilon=\varepsilon_{-}.
\end{aligned} \right.
$$
For $\varepsilon=\varepsilon_{+}$, taking $x\rightarrow a$ in \eqref{Phi1P4} and then setting
$$
\Phi_{1}=z^{\theta_{0}}e^{\frac{1}{2}z^{2}+az}w,
$$
we obtain the BHE
\begin{equation}\label{BHE1}
\left\{
\begin{aligned}
&\frac{\mathrm{d}^{2}w}{\mathrm{d}z^{2}}+\left(2z+2a+\frac{2\theta_{0}}{z}\right)\frac{\mathrm{d}w}{\mathrm{d}z}+\frac{pz-q}{z}w=0,\\
&p=2(\theta_{0}+\theta_{\infty}),\\
&q=\lim_{x\rightarrow a}\frac{\mathrm{d}}{\mathrm{d}x}\log\tau(x)-2(\theta_{0}+\theta_{\infty})a.
\end{aligned}
\right.
\end{equation}
For $\varepsilon=\varepsilon_{-}$, by taking $x\rightarrow a$  in \eqref{Phi1P4} and setting
$$
\Phi_{1}=z^{\theta_{0}+1}e^{\frac{1}{2}z^{2}+az}w,
$$
we obtain the BHE
\begin{equation}\label{BHE2}
\left\{
\begin{aligned}
&\frac{\mathrm{d}^{2}w}{\mathrm{d}z^{2}}+\left(2z+2a+\frac{2\theta_{0}+2}{z}\right)\frac{\mathrm{d}w}{\mathrm{d}z}+\frac{pz-q}{z}w=0,\\
&p=2(\theta_{0}+\theta_{\infty}+1),\\
&q=\lim_{x\rightarrow a}\frac{\mathrm{d}}{\mathrm{d}x}\log\tau(x)-2(\theta_{0}+\theta_{\infty}+1)a.
\end{aligned}
\right.
\end{equation}

Next we focus on the poles of $y$.  It is seen from \eqref{CCP4} and \eqref{P4Expandpole} that
\begin{equation}\label{vP4Expand}
v(x)=\left\{\begin{aligned}
  &\textstyle\frac{1}{2}(x-a)^{-2}+O(1), &  \sigma=\sigma_{+},\\
  &\theta_{0}+\theta_{\infty}+O(x-a),   &  \sigma=\sigma_{-}.
\end{aligned} \right.
\end{equation}
By taking the limit $x\rightarrow a$ in \eqref{Phi1P4} and then making the transformation
$$
\Phi_{1}=z^{\theta_{0}}e^{\frac{1}{2}z^{2}+az}w,
$$
we obtain another two BHEs
\begin{equation}\label{BHE3}
\left\{
\begin{aligned}
&\frac{\mathrm{d}^{2}w}{\mathrm{d}z^{2}}+\left(2z+2a+\frac{2\theta_{0}+1}{z}\right)\frac{\mathrm{d}w}{\mathrm{d}z}+\frac{pz-q}{z}w=0,\\
&p=2(\theta_{0}+\theta_{\infty}+1),\\
&q=\lim_{x\rightarrow a}\left(\frac{\mathrm{d}}{\mathrm{d}x}\log\tau(x)-\frac{1}{x-a}\right)-(2\theta_{0}+2\theta_{\infty}+1)a\\
&~~=-b-\left(2\theta_{0}+2\theta_{\infty}-\frac{1}{2}\right)a
\end{aligned}
\right.
\end{equation}
for $\sigma=\sigma_{+}=1$,
\begin{equation}\label{BHE4}
\left\{
\begin{aligned}
&\frac{\mathrm{d}^{2}w}{\mathrm{d}z^{2}}+\left(2z+2a+\frac{2\theta_{0}+1}{z}\right)\frac{\mathrm{d}w}{\mathrm{d}z}+\frac{pz-q}{z}w=0,\\
&p=2(\theta_{0}+\theta_{\infty}),\\
&q=\lim_{x\rightarrow a}\frac{\mathrm{d}}{\mathrm{d}x}\log\tau(x)-2(\theta_{0}+\theta_{\infty})a\\
&~=-b-\left(2\theta_{0}+2\theta_{\infty}+\frac{1}{2}\right)a
\end{aligned}
\right.
\end{equation}
for $\sigma=\sigma_{-}=-1$.


\subsection{Isomonodromy set of  accessory parameters of BHE}

We have shown that  the BHE \eqref{int:BHE},  with  the accessory parameters $a$ and $q$, the other fixed parameters $\gamma$ and $p$
 can be obtained  as a limit of the first row of  the isomonodromy family of  $\Phi(z,x)$,  when  $x\to a$, $a$ being one of the poles or zeros of  the  Painlev\'e IV transcendents.  While  the accessory parameters $(a,q)$ are expressed in terms of the parameters in the  Laurent or Taylor expansion  of the  PIV transcendents as given in \eqref{BHE1}-\eqref{BHE4}. Therefore, we have the following description of the isomonodromy set of  accessory parameters of BHE.

\begin{thm}\label{thm:IsoSetBHE}
There is a discrete set of pairs of accessory parameters $(a_n,q_n)$  such that  the BHE \eqref{int:BHE}
corresponding to these parameters has the same monodromy data described by \eqref{eq:BHE-StokesEq} as the original
equation with the parameters $a$ and $q$. Under the  bijection given in the last equation of \eqref{BHE3},  this set coincides with the set of parameters $(a_n, b_n)$ in the  Laurent expansion near the poles $a_n$ of the unique solution  of PIV  \eqref{eq:P4} with the parameter $\theta_0=(\gamma-1)/2$, $\theta_{\infty}=(p-\gamma-1)/2$ and  the  same monodromy data  \eqref{eq:BHE-StokesEq} as the  BHE.
\end{thm}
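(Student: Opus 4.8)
The plan is to mirror the proof of Theorem \ref{thm:IsoSetRTHE} step by step, since the structure is identical; the only novelty for BHE is the presence of a regular singularity at the origin (hence a connection matrix $E$ in the monodromy data) and the fact that $y(x)$ has both zeros and poles, so that several normal forms \eqref{BHE1}--\eqref{BHE4} appear. First I would fix the BHE \eqref{int:BHE} with given parameters $\gamma,p$ and accessory parameters $(a,q)$, and form its monodromy data \eqref{eq:BHEMonDat}, in particular the Stokes multipliers satisfying \eqref{eq:BHE-StokesEq} together with the connection matrix $E$ (determined up to a left-diagonal factor). Next I would invoke the isomonodromy system \eqref{LaxPairP4} with $\theta_0=(\gamma-1)/2$, $\theta_\infty=(p-\gamma-1)/2$, and pick the unique PIV transcendent $y(x)$ whose associated $\Phi_k(z,x)$ carries precisely this monodromy data; this is where one uses that the monodromy map for the Lax pair is (locally) a bijection onto the surface \eqref{eq:P4-StokesEq}, so such a $y$ exists and is unique. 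The key point, already established in Section \ref{sec:BHEP4}, is that as $x\to a$ with $a$ a pole of $y$ with residue $\sigma=+1$, the reduction \eqref{Phi1P4}$\to$\eqref{BHE3} produces exactly a BHE, and the $x$-independence of the monodromy data forces $\hat S_k=S_k$ and $\hat E=E$; thus the given $(a,q)$ is matched, through the last equation of \eqref{BHE3}, namely $q=-b-(2\theta_0+2\theta_\infty-\tfrac12)a$, with the pole data $(a,b)$ of that transcendent.

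Having set up the correspondence in one direction, I would then run it in reverse: every pole $a_n$ of the selected PIV transcendent, with Laurent coefficient $b_n$ as in \eqref{P4Expandpole}, yields via \eqref{Phi1P4}$\to$\eqref{BHE3} a BHE with accessory parameters $(a_n,q_n)$, $q_n=-b_n-(2\theta_0+2\theta_\infty-\tfrac12)a_n$, and by the same isomonodromy argument this BHE has the monodromy data \eqref{eq:BHEMonDat}, in particular \eqref{eq:BHE-StokesEq}, identical to the original one. Since a Painlev\'e IV transcendent has infinitely many poles, which form a discrete subset of $\mathbb{C}$ (here one may cite the general Painlev\'e property, as in the proof of Theorem \ref{thm:IsoSetRTHE}), the resulting set $\{(a_n,q_n)\}$ is discrete, and the affine map $b_n\mapsto q_n$ in the last line of \eqref{BHE3} is the claimed bijection between $\{(a_n,b_n)\}$ and $\{(a_n,q_n)\}$. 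Finally I would note that restricting to poles (rather than also using the zeros of $y$ and the forms \eqref{BHE1}, \eqref{BHE2}, \eqref{BHE4}) is a matter of convention: the theorem is stated in terms of the Laurent expansion near poles, and one simply chooses, among $\varepsilon_\pm,\sigma_\pm$, the branch $\sigma=\sigma_+$ so that the characteristic exponents of the resulting BHE, namely $\{0,-2\theta_0\}$ at $z=0$ and the exponents coming from \eqref{eq:BHE-infty}, match those prescribed by $\gamma,p$.

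The main obstacle is the bookkeeping that identifies the monodromy data of the limiting equation with \eqref{eq:BHEMonDat} rather than merely with the Stokes multipliers: unlike RTHE, the BHE has a genuine regular singular point at $z=0$, so one must check that the connection matrix $E$ of \eqref{eq:BHEZero} survives the limit $x\to a$ and equals $\hat E$ from the Lax pair. Concretely, one must verify that the transformation $\Phi_1=z^{\theta_0}e^{\frac12 z^2+az}w$ used in Section \ref{sec:BHEP4} intertwines the $z\to\infty$ and $z\to0$ normalizations \eqref{eq:P4-infty} and the behavior near $z=0$ with the BHE normalizations \eqref{eq:BHE-infty}, \eqref{eq:BHEZero} uniformly as $x\to a$, so that the Stokes and connection data pass to the limit unchanged. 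Once this uniformity is granted — and it follows from the explicit rational dependence of the Lax matrices on $x$ together with the convergence of the Laurent expansion \eqref{P4Expandpole} — the rest is exactly as in Theorem \ref{thm:IsoSetRTHE}, and since the paper explicitly says these proofs ``can be proved in the same manner'', I would present this argument compactly, referring back to Section \ref{sec:BHEP4} for the reduction and to the proof of Theorem \ref{thm:IsoSetRTHE} for the isomonodromy bijection, and omit the routine computations verifying \eqref{BHE3}.
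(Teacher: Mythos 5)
Your proposal is correct and follows essentially the same route the paper takes: it transplants the proof of Theorem \ref{thm:IsoSetRTHE} (and the connection-matrix bookkeeping of Theorem \ref{thm:IsoSetHE}) to the PIV/BHE setting, using the reduction of Section \ref{sec:BHEP4} at the poles with $\sigma=\sigma_{+}$ and the $x$-independence of the Stokes and connection data, which is precisely what the paper means when it says these theorems ``can be proved in the same manner.'' Your choice of the branch \eqref{BHE3} is the right one, since $\gamma=2\theta_0+1$ and $p=2(\theta_0+\theta_\infty+1)$ reproduce exactly the parameter identification $\theta_0=(\gamma-1)/2$, $\theta_\infty=(p-\gamma-1)/2$ in the statement.
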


\section{ Accessory parameters of CHE }\label{sec:CHE}

\subsection{Monodromy of CHE }

Consider the CHE  \eqref{int:CHE}  
with  the accessory parameters $a$ and $q$, the fixed parameters $\gamma$, $\delta$ and $p$.
There exist uniquely two linear independent solutions of  \eqref{int:CHE}, namely $Y_k=(y_{k1}, y_{k2})$,   normalizing the asymptotic behavior as $z\to \infty$
\begin{equation}\label{eq:CHE-infty}Y_k(z)\sim e^{-\frac{z}{2}}z^{-\frac{1}{2}(\theta_0+\theta_1)}(1,~1/z) e^{\frac{z}{2}\sigma_3}z^{-\frac{1}{2}\theta_{\infty}\sigma_3}, \quad z\in \Omega_k, \end{equation}
where the parameters $\theta_0=\gamma$, $\theta_1=\delta-1$, $\theta_{\infty}=2p-\gamma-\delta+1$ and the Stokes sectors $\Omega_k$, $k=1,2$ are defined by
\begin{equation}\label{eq:CHEStokesSec} \Omega_k=\left\{z\in \mathbb{C}:~~ \frac{\pi}{2}(-2k+1)<\arg z<\frac{\pi}{2}(-2k+5)\right\}.
\end{equation}
The solutions  are related to each other  by the Stokes matrices
\begin{equation}\label{eq:CHE-Stokes1}Y_2(z)=Y_1(z)S_1,\quad  S_1=\begin{pmatrix}
    1 & 0 \\
    s_1 & 1
\end{pmatrix}, \end{equation}
\begin{equation}\label{eq:CHE-Stokes2}Y_1(ze^{2\pi i})e^{\pi i(\theta_0+\theta_1) }e^{\pi i\theta_{\infty}\sigma_3}=Y_2(z)S_2,\quad S_2=\begin{pmatrix}
    1 & s_2 \\
  0 & 1
\end{pmatrix}. \end{equation}
Near the regular singular points, $Y_1$ has the asymptotic behaviors
$$Y_1(z) \sim \left(z^0, z^{1-\theta_0}\right)E_0, \quad Y_1(z) \sim \left((z-a)^{0}, (z-a)^{-\theta_1}\right )E_1,$$
with  some invertible constant matrices  $E_0$ and $E_1$. After an
 analytic continuation along a closed loop around a singular point,  we obtain the relations
\begin{equation}\label{eq:CHE-M}Y_1\left(z_k+e^{2\pi i}(z-z_k)\right )=Y_1(z) e^{-\pi i\theta_k}M_k,\quad M_{k}=E_k^{-1}e^{\pi i\theta_k\sigma_3} E_k\end{equation}
for $k=0,1$, where $z_0=0$ and $z_1=a$.
We define the monodromy data of the CHE equation as
\begin{equation}\label{eq:CHEMonDat}\left\{e^{\pi i\theta_{\infty}},e^{\pi i\theta_0}, e^{\pi i\theta_1}; S_1, S_1 ; E_0, E_1\right \} .\end{equation}
According to \eqref{eq:CHE-Stokes1}-\eqref{eq:CHE-M}, the monodromy matrices  satisfies the cyclic  condition
 \begin{equation}\label{eq:CHE-CyclicCon}M_1M_0=S_1S_2e^{-\pi i\theta_{\infty}\sigma_3}.\end{equation}
 Suppose
 \begin{equation}\label{eq:CHE-sigama}  2\cos (\pi \sigma)=\Tr M_1M_2=2\cos (\pi\theta_{\infty})+s_1s_2e^{\pi i\theta_{\infty}}, \end{equation}
 with $0\leq \Re \sigma\leq 1$.
 Then,  the connection matrices can be parameterized  in terms of $\sigma$ and the other free parameter $s$; see \cite{Jim1}.
 For instance, we have for $\sigma\neq 0$ \cite[Equation (3.7)]{Jim1}:
 \begin{align}\label{eq:CHE-E1}D_1E_1D&=\begin{pmatrix}
    \sin(\frac{\pi}{2}(\theta_1+\theta_0+\sigma))\sin(\frac{\pi}{2}(\theta_1-\theta_0+\sigma))& -s   \sin(\frac{\pi}{2}(\theta_1+\theta_0-\sigma))\sin(\frac{\pi}{2}(\theta_1-\theta_0-\sigma))\\
  -s^{-1}& 1\end{pmatrix}\nonumber\\
  &~~~~~~~~~~~~~~~~~~~~~~~~~~~~~~~~~~~~~~~~~~~~~~~~~~ \times   \begin{pmatrix}
      e^{-\frac{1}{2}\pi i\sigma} & \sin(\frac{\pi}{2}(\theta_{\infty}+\sigma)) \\
   e^{\frac{1}{2}\pi i \sigma} & \sin(\frac{\pi}{2}(\theta_{\infty}-\sigma))\end{pmatrix}
  , \end{align}
  and
 \begin{align}\label{eq:CHE-E0}D_0E_0D&=\begin{pmatrix}
    \sin(\frac{\pi}{2}(\theta_1+\theta_0+\sigma))& -s   e^{-\pi i\sigma} \sin(\frac{\pi}{2}(\theta_1+\theta_0-\sigma))\\
  -s^{-1}e^{\pi i\sigma} \sin(\frac{\pi}{2}(\theta_1-\theta_0+\sigma))& \sin(\frac{\pi}{2}(\theta_1-\theta_0-\sigma))\end{pmatrix}\nonumber\\
  &~~~~~~~~~~~~~~~~~~~~~~~~~~~~~~~~~~~~~~~~~~~~~~~~~~ \times   \begin{pmatrix}
      e^{-\frac{1}{2}\pi i\sigma} & \sin(\frac{\pi}{2}(\theta_{\infty}+\sigma)) \\
   e^{\frac{1}{2}\pi i \sigma} & \sin(\frac{\pi}{2}(\theta_{\infty}-\sigma))\end{pmatrix},\end{align}
   with some invertible diagonal matrices $D_1,D_0$ and $D$.

\subsection{Isomonodromy deformation and PV equation}

To study the isomonodromy deformation of the CHE, it is  convenient to consider a $2\times 2$ matrix  system, which has one irregular singular point of rank 1 at infinity and two regular singularities. In \cite{FIKN,Jim2}, the isomonodromy deformation of such a matrix  system has been considered.

Recall the following Lax pair for PV  (see \cite[(C.38)-(C.39)]{Jim2}):
\begin{equation}\label{LaxPairP5}
\left\{\begin{aligned}
&\frac{\partial\Phi(z,x)}{\partial z}=A(z,x)\Phi(z,x), \ \ A(z,x)=\frac{\sigma_{3}}{2}+\frac{A_{0}(x)}{z}+\frac{A_{1}(x)}{z-x}, \\
&\frac{\partial\Phi(z,x)}{\partial x}=B(z,x)\Phi(z,x), \ \ B(z,x)=-\frac{A_{1}(x)}{z-x},
\end{aligned}\right.
\end{equation}
the coefficients
\begin{align*}
A_{0}(x)&=
\begin{pmatrix}
     v+\frac{\theta_{0}}{2} & -u(v+\theta_{0}) \\
     \frac{v}{u} & -v-\frac{\theta_{0}}{2}
\end{pmatrix},\\
A_{1}(x)&=
\begin{pmatrix}
     -v-\frac{\theta_{0}+\theta_{\infty}}{2} & yu\big(v+\frac{\theta_{0}-\theta_{1}+\theta_{\infty}}{2}\big) \\
     -\frac{1}{yu}\big(v+\frac{\theta_{0}+\theta_{1}+\theta_{\infty}}{2}\big) & v+\frac{\theta_{0}+\theta_{\infty}}{2}
\end{pmatrix}.
\end{align*}
The compatibility condition of the above Lax pair reads
\begin{equation}\label{CCP5}
\left\{
\begin{aligned}
x\frac{\mathrm{d}y}{\mathrm{d}x}&=xy-2v(y-1)^{2}-(y-1)\left(\frac{\theta_{0}-\theta_{1}+\theta_{\infty}}{2}y
-\frac{3\theta_{0}+\theta_{1}+\theta_{\infty}}{2}\right),\\
x\frac{\mathrm{d}v}{\mathrm{d}x}&=yv\left(v+\frac{\theta_{0}-\theta_{1}+\theta_{\infty}}{2}\right)
-\frac{1}{y}(v+\theta_{0})\left(v+\frac{\theta_{0}+\theta_{1}+\theta_{\infty}}{2}\right),\\
x\frac{\mathrm{d}u}{\mathrm{d}x}&=u\left[-2v-\theta_{0}-\theta_{\infty}+y\left(v+\frac{\theta_{0}-\theta_{1}+\theta_{\infty}}{2}\right)
+\frac{1}{y}\left(v+\frac{\theta_{0}+\theta_{1}+\theta_{\infty}}{2}\right)\right],
\end{aligned}
\right.
\end{equation}
which implies the PV equation
\begin{equation}\label{eq:P5}
\frac{\mathrm{d}^{2}y}{\mathrm{d}x^{2}}=\left(\frac{1}{2y}+\frac{1}{y-1}\right)\left(\frac{\mathrm{d}y}{\mathrm{d}x}\right)^{2}
-\frac{1}{x}\frac{\mathrm{d}y}{\mathrm{d}x}+\frac{(y-1)^{2}}{x^{2}}\left(\alpha y+\frac{\beta}{y}\right)
+\frac{\gamma y}{x}-\frac{y(y+1)}{2(y-1)},
\end{equation}
where
$$
\alpha=\frac{1}{8}(\theta_{0}-\theta_{1}+\theta_{\infty})^{2},\ \ \ \ \beta=-\frac{1}{8}(\theta_{0}-\theta_{1}-\theta_{\infty})^{2},\ \ \ \ \gamma=1-\theta_{0}-\theta_{1}.
$$

There exist unique solutions $\Phi_k(z,x)$ of the equation \eqref{LaxPairP5}, which satisfies the  normalized asymptotic behavior as $z\to \infty$
\begin{equation}\label{eq:CHE-infty}\Phi_k(z,x)=(I+O(1/z)) e^{\frac{z}{2}\sigma_3}z^{-\frac{1}{2}\theta_{\infty}\sigma_3}, \quad z\in \Omega_k, \end{equation}
where $\Omega_k$, $k=1,2$, are the Stokes sectors defined in \eqref{eq:CHEStokesSec}. The solutions are  related to each other by the Stokes matrices
\begin{equation}\label{eq:PV-Stokes1}\Phi_2(z)=\Phi_1(z)\hat{S}_1, \quad \hat{S}_1=\begin{pmatrix}
    1 & 0 \\
    \hat{s}_1 & 1
\end{pmatrix}, \end{equation}
\begin{equation}\label{eq:PV-Stokes2}\Phi_1(ze^{2\pi i})e^{\pi i\theta_{\infty}\sigma_3}=\Phi_2(z)\hat{S}_2, \quad \hat{S}_2=\begin{pmatrix}
    1 & \hat{s}_2 \\
  0 & 1
\end{pmatrix}. \end{equation}
Near the regular singular points $ z_0=0$ and $z_1=a$, $\Phi_1(z,x)$ has the asymptotic behaviors
\begin{equation*}
 \Phi_1(z,x) =\Phi_1^{(k)}(x)\left(I+O\left(z-z_k\right)\right)
(z-z_k)^{\frac{1}{2}\theta_k\sigma_3}\hat{E}_k,~~z\to z_k,
\end{equation*}
with  some invertible constant matrices $\hat{E}_k$ and $z$-independent matrices $\Phi_1^{(k)}(x)$, $k=0,1$.
The monodromy data of the system  \eqref{LaxPairP5} is defined as the set
\begin{equation}\label{eq:PVMonDat}\left\{e^{\pi i\theta_{\infty}},e^{\pi i\theta_k}; \hat{S}_1, \hat{S}_2; \hat{E}_0, \hat{E}_1\right\} .\end{equation}
They satisfy the cyclic  condition
 \begin{equation}\label{eq:CHE-CyclicCon}\hat{M}_1\hat{M}_0=\hat{S}_1\hat{S}_2e^{-\pi i\theta_{\infty}\sigma_3}, \quad \hat{M}_k= \hat{E}_k^{-1}e^{\pi i\theta_k\sigma_3} \hat{E}_k, \quad k=0,1. \end{equation}
%

\subsection{Reduction of the  linear systems   for PV to CHE}\label{sec:CHEPV}

In this subsection, we will derive CHE from the linear systems \eqref{Phi1} and \eqref{Phi2} for PV at the  poles, zeros and 1-points (that is, $y(a)=1$) of the solutions to PV.

Substituting the elements of $A(z,x)$ into \eqref{Phi1} gives
\begin{equation}\label{Phi1P5}
\frac{\mathrm{d}^{2}\Phi_{1}}{\mathrm{d}z^{2}}-\left(\frac{1}{z-\lambda_{1}}-\frac{1}{z}-\frac{1}{z-x}\right)\frac{\mathrm{d}\Phi_{1}}{\mathrm{d}z}-P(z,x)\Phi_{1}=0,
\end{equation}
where
$$
\left\{
\begin{aligned}
&\lambda_{1}=\lambda_{1}(x)=\frac{x}{1-\frac{y(v+\frac{1}{2}(\theta_{0}-\theta_{1}+\theta_{\infty}))}{v+\theta_{0}}},\\
&P(z,x)=\frac{1}{4}+\frac{\theta_{0}^{2}}{4z^{2}}+\frac{M(x)}{z(z-x)}+\frac{\theta_{1}^{2}}{4(z-x)^{2}}+\frac{2-\theta_{\infty}}{2(z-x)}\\
&\ \ \ \ \ \ \ \ \ \ \ \ -\frac{1}{z-\lambda_{1}}\left(\frac{1}{2}+\frac{2v+\theta_{0}}{2z}-\frac{2v+\theta_{0}+\theta_{\infty}}{2(z-x)}\right),\\
&M(x)=yv\left(v+\frac{\theta_{0}-\theta_{1}+\theta_{\infty}}{2}\right)+\frac{1}{y}(v+\theta_{0})\left(v+\frac{\theta_{0}+\theta_{1}+\theta_{\infty}}{2}\right)\\
&\ \ \ \ \ \ \ \ \ \ -2\left(v+\frac{\theta_{0}}{2}\right)\left(v+\frac{\theta_{0}+\theta_{\infty}}{2}\right)-\left(v+\frac{1+\theta_{0}}{2}\right)x-\frac{\theta_{\infty}}{2}.
\end{aligned}
\right.
$$
In virtue of the definition of the PV $\tau$-function (see \cite[(C.43)]{Jim2}):
\begin{align}\label{TauP5}
x\frac{\mathrm{d}}{\mathrm{d}x}\log \tau(x)=&-\left[v-\frac{1}{y}\left(v+\frac{\theta_{0}+\theta_{1}+\theta_{\infty}}{2}\right)\right]\left[v+\theta_{0}-y\left(v+\frac{\theta_{0}-\theta_{1}+\theta_{\infty}}{2}\right)\right]\nonumber\\
&-\left(v+\frac{\theta_{0}+\theta_{\infty}}{2}\right)x,
\end{align}
the expression of $M(x)$ is simplified to
$$
M(x)=x\frac{\mathrm{d}}{\mathrm{d}x}\log\tau(x)+\frac{(\theta_{\infty}-1)x}{2}-\frac{\theta_{0}^{2}+\theta_{1}^{2}-\theta_{\infty}^{2}}{4}-\frac{\theta_{\infty}}{2}.
$$
Similarly, substituting the elements of $A(z,x)$ into  \eqref{Phi2} gives
\begin{equation}\label{Phi2P5}
\frac{\mathrm{d}^{2}\Phi_{2}}{\mathrm{d}z^{2}}-\left(\frac{1}{z-\lambda_{2}}-\frac{1}{z}-\frac{1}{z-x}\right)\frac{\mathrm{d}\Phi_{2}}{\mathrm{d}z}-Q(z,x)\Phi_{2}=0,
\end{equation}
where
$$
\left\{
\begin{aligned}
&\lambda_{2}=\lambda_{2}(x)=\frac{x}{1-\frac{v+\frac{1}{2}(\theta_{0}+\theta_{1}+\theta_{\infty})}{yv}},\\
&Q(z,x)=\frac{1}{4}+\frac{\theta_{0}^{2}}{4z^{2}}+\frac{N(x)}{z(z-x)}+\frac{\theta_{1}^{2}}{4(z-x)^{2}}-\frac{2+\theta_{\infty}}{2(z-x)}\\
&\ \ \ \ \ \ \ \ \ \ +\frac{1}{z-\lambda_{2}}\left(\frac{1}{2}+\frac{2v+\theta_{0}}{2z}-\frac{2v+\theta_{0}+\theta_{\infty}}{2(z-x)}\right),\\
&N(x)=x\frac{\mathrm{d}}{\mathrm{d}x}\log\tau(x)+\frac{(\theta_{\infty}+1)x}{2}-\frac{\theta_{0}^{2}+\theta_{1}^{2}-\theta_{\infty}^{2}}{4}+\frac{\theta_{\infty}}{2}.
\end{aligned}
\right.
$$

To eliminate the singularities $z=\lambda_{1}$ and $z=\lambda_{2}$ and obtain CHE, we need
\begin{align}\label{Cod1}
&\lambda_{1}(x)=0,\ \ \mathrm{or} \ \ \lambda_{1}(x)=x,\ \ \mathrm{or}\ \ \lambda_{1}(x)=\infty,\\
&\lambda_{2}(x)=0,\ \ \mathrm{or} \ \ \lambda_{2}(x)=x,\ \ \mathrm{or}\ \ \lambda_{2}(x)=\infty,  \label{Cod2}
\end{align}
in \eqref{Phi1P5} and  \eqref{Phi2P5}, respectively. For this purpose, we consider the
poles, zeros and 1-points of the solution $y$.

According to \cite[(36.4)-(36.7)]{GLS}, the solution $y$ admits the Laurent expansion  near a pole
$$
y(x)=\left\{\begin{aligned}
  &\varepsilon(x-a)^{-1}+b+O(x-a),\ \   &\alpha\neq0,\\
  &ab(x-a)^{-2}+b(x-a)^{-1}+O(1), \ \   &\alpha=0,
\end{aligned} \right.
$$
and the following Taylor expansions near a zero and near a 1-point
$$
y(x)=\left\{\begin{aligned}
  &\delta(x-a)+b(x-a)^{2}+O((x-a)^{3}),\ \   &\beta\neq0,\\
  &b(x-a)^{2}+O((x-a)^{3}), \ \              &\beta=0,
\end{aligned} \right.
$$
$$
y(x)=1+\omega(x-a)+\bigg(\frac{1}{2}+\frac{\omega-1+\theta_{0}+\theta_{1}}{2a}\bigg)(x-a)^{2}+O((x-a)^{3}),
$$
where $b$ is arbitrary and the parameters
\begin{equation}\label{coeffP5}
\left\{
\begin{aligned}
\varepsilon&=\varepsilon_{\pm}=\pm 2a/(\theta_{0}-\theta_{1}+\theta_{\infty}),\\
\delta&=\delta_{\pm}=\pm (\theta_{0}-\theta_{1}-\theta_{\infty})/2a,\\
\omega&=\omega_{\pm}=\pm1.
\end{aligned}\right.
\end{equation}
From the first equation in the compatibility condition \eqref{CCP5} and above expansions, we derive the behaviors of $v,\lambda_{1},\lambda_{2}$ at each  pole, zero or 1-point of $y$, as given in Table 1.

\renewcommand\arraystretch{1.2}
\begin{table}[h]
\begin{center}
\begin{tabular}{|c|c|c|c|c|}
\hline
 Case &   $y$ &  $v$ &  $\lambda_{1}$ &  $\lambda_{2}$ \\
 \hline
$\varepsilon=\varepsilon_{+}$  &~~simple pole~~  & $0$                                                     & 0     &   $O(a)$  \\
\hline
$\varepsilon=\varepsilon_{-}$  & simple pole  & $-\frac{1}{2}(\theta_{0}-\theta_{1}+\theta_{\infty})$   &    $ ~~~~O(a) ~~~~$       &  $a$  \\
\hline
$\alpha=0$                     & double pole  & $0$                                                     &  0      &  $a$  \\
\hline
$\delta=\delta_{+}$            & simple zero  & $-\theta_{0}$                                           &     $O(a)$     &  0  \\
\hline
$\delta=\delta_{-}$            & simple zero  & $-\frac{1}{2}(\theta_{0}+\theta_{1}+\theta_{\infty})$   &  $a$      &    $O(a)$   \\
\hline
$\beta=0$                      & double zero  & $-\frac{1}{2}(\theta_{0}+\theta_{1}+\theta_{\infty})$  &  $a$      &  0  \\
\hline
$\omega=\omega_{+}$            & 1-point      & $O(1)$                                                  &     $O(a)$      &   $ ~~~~O(a) ~~~~$   \\
\hline
$\omega=\omega_{-}$            & 1-point      & $a(x-a)^{-2}+O(1)$                                      & $\infty$  & $\infty$  \\
\hline
\end{tabular}
\end{center}
\caption{Behaviors of $y,v,\lambda_{1},\lambda_{2}$ at the critical point $x=a$}
\label{TableBehavior-y}
\end{table}

It is readily seen from Table 1 that the condition \eqref{Cod1} or  \eqref{Cod2} is fulfilled when $a$ is a pole, zero  or 1-point of $y$ with $y'(a)=\omega_{-}$.  Thus, the equation \eqref{Phi1P5} or \eqref{Phi2P5} is  equivalent to CHE in these cases. For example,  we consider the case $\varepsilon=\varepsilon_{+}$. According to Table 1, we have $v(a)=0 $, $\lambda_{1}(a)=0$. Thus, by taking $x\rightarrow a$ in  \eqref{Phi1P5} and setting
 \begin{equation*}
\Phi_{1}=z^{\frac{\theta_{0}}{2}}(z-a)^{\frac{\theta_{1}}{2}}e^{\frac{1}{2}z}w,
\end{equation*} 
we obtain the CHE
\begin{equation}\label{eq:CHE}
\left\{\begin{aligned}
&\frac{\mathrm{d}^{2}w}{\mathrm{d}z^{2}}+\left(1+\frac{\theta_{0}}{z}+\frac{1+\theta_{1}}{z-a}\right)\frac{\mathrm{d}w}{\mathrm{d}z}+\frac{pz-q}{z(z-a)}w=0,\\
&p=\frac{1}{2}(\theta_{0}+\theta_{1}+\theta_{\infty}),\\
&q=\lim_{x\rightarrow a}x\frac{\mathrm{d}}{\mathrm{d}x}\log\tau(x)
+\frac{(\theta_{0}+\theta_{\infty})a}{2}-\frac{(\theta_{0}+\theta_{1})^{2}
-\theta_{\infty}^{2}}{4}\\
&~~=\frac{a}{4}(\theta_{0}-\theta_{1}+\theta_{\infty})
-\frac{b}{4}(\theta_{0}-\theta_{1}+\theta_{\infty})^{2}
+\frac{1}{4}(\theta_{\infty}+1)(\theta_{0}-\theta_{1}+\theta_{\infty})
-\theta_{0}\theta_{1}.
\end{aligned}\right.
\end{equation}

Actually, we obtain six different CHEs with the characteristic exponents at the singular points  shown in Table 2 and the accessory parameters given below:
\begin{equation}\label{qP5-1}
q=\lim_{x\rightarrow a}x\frac{\mathrm{d}}{\mathrm{d}x}\log\tau(x)+\frac{(\theta_{0}+\theta_{\infty})a}{2}-\frac{(\theta_{0}+\theta_{1})^{2}-\theta_{\infty}^{2}}{4},
\end{equation}
for $\varepsilon=\varepsilon_{+}$, $\delta=\delta_{+}$ and $\delta=\delta_{-}$,
\begin{equation}\label{qP5-2}
q=\lim_{x\rightarrow a}x\frac{\mathrm{d}}{\mathrm{d}x}\log\tau(x)+\frac{(\theta_{0}+\theta_{\infty}+2)a}{2}-\frac{(\theta_{0}+\theta_{1})^{2}-\theta_{\infty}^{2}}{4},
\end{equation}
for $\varepsilon=\varepsilon_{-}$,
\begin{equation}\label{qP5-3}
q=\lim_{x\rightarrow a}\left(x\frac{\mathrm{d}}{\mathrm{d}x}\log\tau(x)-\frac{a}{x-a}\right)+\frac{(\theta_{0}+\theta_{\infty})a}{2}-\frac{(\theta_{0}+\theta_{1})^{2}-\theta_{\infty}^{2}}{4}+1-\theta_{0}-\theta_{1},
\end{equation}
for $\omega=\omega_{-}$ and employing $\Phi_1$,
\begin{equation}\label{qP5-4}
q=\lim_{x\rightarrow a}\left(x\frac{\mathrm{d}}{\mathrm{d}x}\log\tau(x)-\frac{a}{x-a}\right)+\frac{(\theta_{0}+\theta_{\infty}+2)a}{2}-\frac{(\theta_{0}+\theta_{1})^{2}-\theta_{\infty}^{2}}{4}+1-\theta_{0}-\theta_{1},
\end{equation}
for $\omega=\omega_{-}$ and using $\Phi_2$.

\begin{table}[h]
\begin{center}
\begin{tabular}{|c|c|c|c|c|c|}
\hline
 Case &  Equation &   $0$ &  $a$ &   $\infty$ &  $p$ \\
 \hline
$\varepsilon=\varepsilon_{+}$  &  \eqref{Phi1P5}    &  $\theta_{0}$     &  $~~1+\theta_{1}~~$   &  ~~1~~  &   $\frac{1}{2}(\theta_{0}+\theta_{1}+\theta_{\infty})$    \\
\hline
$\varepsilon=\varepsilon_{-}$  & \eqref{Phi2P5}      &  ~~$1+\theta_{0}$~~   &  $\theta_{1}$     &  1  &   $\frac{1}{2}(\theta_{0}+\theta_{1}+\theta_{\infty}+2)$  \\
\hline
$\delta=\delta_{+}$            & \eqref{Phi2P5}   &  $\theta_{0}$     &  $1+\theta_{1}$   &  1  &   $\frac{1}{2}(\theta_{0}+\theta_{1}+\theta_{\infty}+2)$  \\
\hline
$\delta=\delta_{-}$            &  \eqref{Phi1P5}   &  $1+\theta_{0}$   &  $\theta_{1}$     &  1  &   $\frac{1}{2}(\theta_{0}+\theta_{1}+\theta_{\infty})$    \\
\hline
$\omega=\omega_{-}$            &  \eqref{Phi1P5}    &  $1+\theta_{0}$   &  $1+\theta_{1}$   &  1  &   $\frac{1}{2}(\theta_{0}+\theta_{1}+\theta_{\infty}+2)$   \\
\hline
$\omega=\omega_{-}$            &  \eqref{Phi2P5}   &  $1+\theta_{0}$   &  $1+\theta_{1}$   &  1  &   $\frac{1}{2}(\theta_{0}+\theta_{1}+\theta_{\infty}+2)$   \\
\hline
\end{tabular}
\end{center}
\caption{The characteristic exponents of CHEs obtained from equations \eqref{Phi1P5} and \eqref{Phi2P5}    }

\end{table}

We mention that the case $\lambda_{1}(x)=x$ has been considered in \cite{CC,CN1}. According to Table 1, the condition $\lambda_{1}(x)=x$ is equivalent to the case that $y$ has simple zeros with coefficient $\delta=\delta_{-}$ or $y$ has double zeros.

%
%

\subsection{Isomonodromy set of  accessory parameters of CHE}

We  have shown that  the CHE  can be obtained  as a limit of the first row or second row of  the isomonodromy family of  $\Phi(z,x)$  when  $x\to a$, $a$ being one of the poles, zeros or $1$-points of  the  Painlev\'e V transcendents corresponding to the same monodromy data of CHE.  While  the accessory parameters $(a,q)$ are expressed in terms of the parameters in the Laurent or Taylor expansion  of the  PV transcendents. As a consequence, we have the following description of the isomonodromy set of  accessory parameters of CHE.

\begin{thm}\label{thm:IsoSetCHE}
There is a discrete set of pairs of accessory parameters $(a_n,q_n)$  such that  the CHE \eqref{int:CHE}
corresponding to these parameters and the fixed parameters $\gamma$, $\delta$ and $p$, has the same monodromy data  \eqref{eq:CHEMonDat} as the original
one with the parameters $a$ and $q$. Under the relation given in the last equation of  \eqref{eq:CHE},  this set coincides with the set of parameters $(a_n, b_n)$ in the  Laurent expansion near the poles $a_n$ of the unique solution  of PV  \eqref{eq:P5} with the parameters $\theta_0=\gamma$, $\theta_1=\delta-1$, $\theta_{\infty}=2p-\gamma-\delta+1$ and corresponding to the same monodromy data \eqref{eq:CHEMonDat} as the CHE.
\end{thm}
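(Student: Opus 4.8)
The plan is to run essentially the argument from the proof of Theorem~\ref{thm:IsoSetRTHE}, with the Lax pair \eqref{LaxPairP5} for PV in place of \eqref{LaxPair-PI} and the reduction carried out in Section~\ref{sec:CHEPV} in place of Section~\ref{sec: PIRTHE}. First I would start from the CHE \eqref{int:CHE} with the prescribed accessory parameters $a$, $q$ and fixed $\gamma$, $\delta$, $p$, set $\theta_0=\gamma$, $\theta_1=\delta-1$, $\theta_\infty=2p-\gamma-\delta+1$, and record its canonical solutions $Y_k$, their Stokes matrices \eqref{eq:CHE-Stokes1}--\eqref{eq:CHE-Stokes2}, the local monodromy matrices \eqref{eq:CHE-M} about $z=0,a$, and the monodromy data \eqref{eq:CHEMonDat}.

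The second and main step is to produce the PV transcendent. Since the cyclic relation for the CHE and the normalization pattern of $Y_k$ at $z=0,a,\infty$ coincide with the cyclic relation and the normalization of the linear system \eqref{LaxPairP5}, the data \eqref{eq:CHEMonDat} is simultaneously a set of admissible monodromy data for \eqref{LaxPairP5}; solving the associated inverse-monodromy (Riemann--Hilbert) problem yields a \emph{unique} meromorphic solution $y(x)$ of the PV equation \eqref{eq:P5} with parameters $\theta_0,\theta_1,\theta_\infty$ and precisely this monodromy data, together with an isomonodromic family $\Phi_k(z,x)$ whose Stokes and connection matrices are independent of $x$ and equal to those of the original CHE. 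I would then take $a$ to be a pole of $y(x)$ in the generic case $\alpha\neq0$ (a simple pole with coefficient $\varepsilon=\varepsilon_+$, cf.\ Table~1), let $b$ be the free coefficient in the Laurent expansion $y(x)=\varepsilon(x-a)^{-1}+b+O(x-a)$, and invoke the reduction of Section~\ref{sec:CHEPV}: after the gauge transformation $\Phi_1=z^{\theta_0/2}(z-a)^{\theta_1/2}e^{z/2}w$, the $x\to a$ limit of the first row of $\Phi_k(z,x)$ solves the CHE \eqref{eq:CHE}, whose accessory parameter $q$ is the one produced by the last equation of \eqref{eq:CHE}, i.e.\ by the bijection $q\leftrightarrow b$ claimed in the theorem.

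It then remains to match the monodromy and to obtain discreteness. Because the Stokes and connection matrices of $\Phi_k(z,x)$ do not depend on $x$, and the prefactor $z^{\theta_0/2}(z-a)^{\theta_1/2}e^{z/2}$ converts the normalization of $\Phi_k$ at $z=\infty,0,a$ into exactly the normalization of the canonical CHE solutions listed in Table~2, the canonical solutions of the limiting CHE \eqref{eq:CHE} inherit the same $S_1,S_2$ and $M_0,M_1$, hence the same monodromy data \eqref{eq:CHEMonDat} as the original CHE. Running the construction in reverse — embedding the canonical solutions of the given CHE at $x=a$ into a solution of \eqref{LaxPairP5}, which forces $y$ to have a pole at $x=a$ since the apparent singularity $z=\lambda_1$ in \eqref{Phi1P5} must disappear there — shows that the original pair $(a,q)$ is itself one of these pairs. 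Thus the isomonodromy set is $\{(a_n,q_n)\}$, where $a_n$ runs over the poles of $y$ and $q_n=q(a_n,b_n)$ via \eqref{eq:CHE}; since a PV transcendent has only finitely many poles in any compact subset of $\mathbb{C}\setminus\{0\}$, this set is discrete.

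I expect the main obstacle to be the bookkeeping in the monodromy-matching step together with the well-posedness of the inverse problem: one must verify that the gauge transformation $\Phi_1\mapsto w$ and the coalescence $x\to a$ carry the canonical PV solutions $\Phi_k$ onto the canonical CHE solutions $Y_k$ without introducing any spurious diagonal conjugation or sign, and that the Riemann--Hilbert problem for \eqref{LaxPairP5} with data \eqref{eq:CHEMonDat} is solvable and produces a genuinely meromorphic $y(x)$ with the required pole. A secondary point is the case analysis in Section~\ref{sec:CHEPV}: besides a simple pole with $\varepsilon=\varepsilon_+$ there are the cases $\varepsilon=\varepsilon_-$, $\alpha=0$, a simple or double zero, and a $1$-point with $\omega=\omega_-$, leading to the accessory parameters \eqref{qP5-1}--\eqref{qP5-4}; these are dealt with by the identical argument (with the appropriate gauge prefactors read off from Table~2), and one must note that the bijection singled out in the statement is the one attached to \eqref{eq:CHE}, while the union over all admissible cases still yields a discrete set.
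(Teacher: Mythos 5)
Your proposal is correct and is essentially the argument the paper intends: the paper explicitly skips the proof of Theorem \ref{thm:IsoSetCHE}, stating it is proved ``in the same manner'' as Theorems \ref{thm:IsoSetRTHE} and \ref{thm:IsoSetHE}, and your adaptation (the Lax pair \eqref{LaxPairP5}, the reduction of Section \ref{sec:CHEPV}, the $x$-independence of the Stokes and connection data, and discreteness of the poles of a PV transcendent) matches those model proofs. The only notable difference is one of direction: the paper produces the relevant Painlev\'e solution by prescribing its Laurent data $(a,b)$ with $b$ fixed by $q$ through the last equation of \eqref{eq:CHE} and then reads off the monodromy from the limit, whereas you solve the inverse monodromy problem first and must therefore add the ``reverse'' step showing that $x=a$ is genuinely a pole (rather than a zero or $1$-point) of the resulting PV solution.
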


\section{ Accessory parameters of HE }\label{sec:HE}
\subsection{Reduction of the linear system  for PVI   to HE }\label{sec: HE-red}
In this subsection, we will derive HE from the linear system \eqref{Phi1} for PVI  at  the poles, zeros, 1-points ($y(x)=1$) and fixed points  ($y(x)=x$) of the solutions of PVI.

Consider the following Lax pair for PVI equation (see \cite[(C.46)-(C.47)]{Jim2}):
\begin{equation}
\left\{\begin{aligned}\label{eq:P6}
&\frac{\partial\Phi(z,x)}{\partial z}=A(z,x)\Phi(z,x), \ \ A(z,x)=\frac{A_{0}}{z}+\frac{A_{1}}{z-1}+\frac{A_{2}}{z-x},\\
&\frac{\partial\Phi(z,x)}{\partial x}=B(z,x)\Phi(z,x), \ \ B(z,x)=-\frac{A_{2}}{z-x},
\end{aligned}\right.
\end{equation}
where
$$
A_{i}=
\begin{pmatrix}
     v_{i}+\theta_{i} & -u_{i} v_{i} \\
     u_{i}^{-1}\left(v_{i}+\theta_{i}\right) & -v_{i}
\end{pmatrix},\ \ \ i=0,1,2.
$$
If we set
\begin{equation}\label{ki}
\left\{
\begin{aligned}
&\kappa_{1}=-\frac{1}{2}\left(\theta_{0}+\theta_{1}+\theta_{2}-\theta_{\infty}\right),\\
&\kappa_{2}=-\frac{1}{2}\left(\theta_{0}+\theta_{1}+\theta_{2}+\theta_{\infty}\right),\\
&A_{0}+A_{1}+A_{2}=
-\begin{pmatrix}
\kappa_{1} & 0 \\
0 & \kappa_{2}
\end{pmatrix},\\
&A(z,x)_{12}=\frac{-u_{0} v_{0}}{z}+\frac{-u_{1} v_{1}}{z-1}+\frac{-u_{2} v_{2}}{z-x}=\frac{k(z-y)}{z(z-1)(z-x)},\\
&v=\frac{v_{0}+\theta_{0}}{y}+\frac{v_{1}+\theta_{1}}{y-1}+\frac{v_{2}+\theta_{2}}{y-x},
\end{aligned}
\right.
\end{equation}
then the compatibility condition of the above Lax pair gives
\begin{equation}\label{CCP6}
\left\{
\begin{aligned}
\frac{\mathrm{d} y}{\mathrm{d} x}&= \frac{y(y-1)(y-x)}{x(x-1)}\left(2 v-\frac{\theta_{0}}{y}-\frac{\theta_{1}}{y-1}-\frac{\theta_{2}-1}{y-x}\right), \\
\frac{\mathrm{d} v}{\mathrm{d} x}&= \frac{1}{x(x-1)}\bigg\{\left[-3 y^{2}+2(1+x) y-x\right] v^{2}+\big[(2 y-1-x) \theta_{0}+(2 y-x) \theta_{1}\\
&\ \ \ +(2 y-1)\left(\theta_{2}-1\right)\big] v-\kappa_{1}\left(\kappa_{2}+1\right)\bigg\},\\
\frac{\mathrm{d} k}{\mathrm{d} x}&=k\left(\theta_{\infty}-1\right) \frac{y-x}{x(x-1)},\end{aligned}
\right.
\end{equation}
which leads to the PVI equation
\begin{align}\nonumber
\frac{\mathrm{d}^{2} y}{\mathrm{d} x^{2}}=\ & \frac{1}{2}\left(\frac{1}{y}+\frac{1}{y-1}+\frac{1}{y-x}\right)\left(\frac{\mathrm{d} y}{\mathrm{d} x}\right)^{2}-\left(\frac{1}{x}+\frac{1}{x-1}+\frac{1}{y-x}\right) \frac{\mathrm{d} y}{\mathrm{d} x} \label{eq:yP6}\\
&+\frac{y(y-1)(y-x)}{x^{2}(x-1)^{2}}\left(\alpha_0+\beta_0 \frac{x}{y^{2}}+\gamma_0 \frac{x-1}{(y-1)^{2}}+\delta_0 \frac{x(x-1)}{(y-x)^{2}}\right),
\end{align}
where
$$
\alpha_0=\frac{1}{2}\left(\theta_{\infty}-1\right)^{2},\ \ \quad \beta_0=-\frac{1}{2} \theta_{0}^{2},\ \ \quad \gamma_0=\frac{1}{2} \theta_{1}^{2},\ \ \quad \delta_0=\frac{1}{2}\left(1-\theta_{2}^{2}\right).
$$
The PVI $\tau$-function is defined by (see \cite[(C.57)]{Jim2}):
\begin{align}\label{TauP6}
x(x-1)\frac{\mathrm{d}}{\mathrm{d}x}\log\tau(x)=\ &y(y-1)(y-x)\bigg\{v^{2}-\bigg(\frac{\theta_{0}}{y}+\frac{\theta_{1}}{y-1}+\frac{\theta_{2}}{y-x}\bigg)v+\frac{\kappa_{1} \kappa_{2}}{y(y-1)}\bigg\}\nonumber\\
                        & +\theta_{0}\theta_{2}(x-1)+\theta_{1}\theta_{2}x.
\end{align}

Substituting the entries of $A(z,x)$ into \eqref{Phi1} gives
\begin{equation}\label{Phi1P6}
\frac{\mathrm{d}^{2}\Phi_{1}}{\mathrm{d}z^{2}}+P(z,x)\frac{\mathrm{d}\Phi_{1}}{\mathrm{d}z}+Q(z,x)\Phi_{1}=0,
\end{equation}
where
$$
\left\{
\begin{aligned}
&P(z,x)=\frac{1-\theta_{0}}{z}+\frac{1-\theta_{1}}{z-1}+\frac{1-\theta_{2}}{z-x}-\frac{1}{z-y},\\
&Q(z,x)=\frac{\kappa_{1}(\kappa_{2}+1)}{z(z-1)}-\frac{M(x)}{z(z-1)(z-x)}+\frac{y(y-1)v}{z(z-1)(z-y)},\\
&M(x)=x(x-1)\frac{\mathrm{d}}{\mathrm{d}x}\log\tau(x)+\kappa_{1}(y-x)+y(y-1)v-\theta_{0}\theta_{2}(x-1)-\theta_{1}\theta_{2}x.
\end{aligned}
\right.
$$
It is clear that the extra singularity $z=y$ can be removed by  considering the critical values
$$
y(a)=0,\ \ \mathrm{or} \ \ y(a)=1,\ \ \mathrm{or} \ \ y(a)=a,\ \ \mathrm{or} \ \ y(a)=\infty.
$$

First, we consider the critical value $y(a)=a$. By taking the limit $x\rightarrow a$ in  \eqref{Phi1P6}, we obtain the following HE directly
\begin{equation}\label{HE1}
\left\{
\begin{aligned}
&\frac{\mathrm{d}^{2}\Phi_{1}}{\mathrm{d}z^{2}}+\left(\frac{1-\theta_{0}}{z}+\frac{1-\theta_{1}}{z-1}+\frac{-\theta_{2}}{z-a}\right)\frac{\mathrm{d}\Phi_{1}}{\mathrm{d}z}+\frac{pz-q}{z(z-1)(z-a)}\Phi_{1}=0,\\
&p=\frac{1}{4}(\theta_{0}+\theta_{1}+\theta_{2}-\theta_{\infty})(\theta_{0}+\theta_{1}+\theta_{2}+\theta_{\infty}-2),\\
&q=\lim_{x\rightarrow a}x(x-1)\frac{\mathrm{d}}{\mathrm{d}x}\log\tau(x)+\kappa_{1}(\kappa_{2}+1)a-\theta_{0}\theta_{2}(a-1)-\theta_{1}\theta_{2}a.
\end{aligned}
\right.
\end{equation}
The equation \eqref{HE1} has  also been obtained in  \cite{ACP1,ANCC,CN2}. It was also noted therein that the condition $y(a)=a$ can be expressed in terms of the associated $\tau$-function.

Next, we consider the poles of $y$. Let $x=a$ ($a\neq 0,1$) be a movable pole of $y(x)$. Then $y(x)$ possesses the following Laurent expansions (see \cite[ (46.7)]{GLS}):
\begin{equation}\label{yP6Expand}
y(x)=\left\{\begin{aligned}
  &\frac{\varepsilon}{x-a}+b+O(x-a),\ \                        & \alpha_0\neq0,\\
  &\frac{b}{(x-a)^{2}}+\frac{(2a-1)b}{a(a-1)(x-a)}+c_{0}+O(x-a), \ \   & \alpha_0=0,
\end{aligned} \right.
\end{equation}
where $\varepsilon=\varepsilon_{\pm}=\pm a(a-1)/(\theta_{\infty}-1)$, $b$ is arbitrary and
$$
c_{0}=\frac{1}{3}(a+1)+\frac{b}{12a^{2}(a-1)^{2}}\Big\{12a(a-1)+1-a\theta_{0}^{2}+(a-1)\theta_{1}^{2}-a(a-1)(\theta_{2}^{2}-1)\Big\}.
$$

We derive HE by considering each case separately.
\subsection*{Case 1: $a$ is a simple pole with residue $\varepsilon=\varepsilon_{+}$ and $\theta_{\infty}\neq 0$}
In this case, we have from \eqref{TauP6},  \eqref{Phi1P6} and \eqref{yP6Expand} that
\begin{align}\label{behM}
&M(x)=x(x-1)\frac{\mathrm{d}}{\mathrm{d}x}\log\tau(x)+(\theta_{\infty}-1)b-\kappa_{1}a-\theta_{0}\theta_{2}(a-1)-\theta_{1}\theta_{2}a\nonumber \\
&\ \ \ \ \ \ \ \ \ \ +\frac{1}{2}\big[-\theta_{0}+(\theta_{2}-1)(a-1)-2a+1-(\theta_{\infty}-1)a\big]+O(x-a),
\end{align}
and
\begin{align}\label{behtau}
&x(x-1)\frac{\mathrm{d}}{\mathrm{d}x}\log\tau(x)=\theta_{\infty}(1-\theta_{\infty})b\nonumber\\
&\ \ \ \ \ \ \ \ \ \ -\frac{1}{2}\theta_{\infty}\big[-\theta_{0}+(\theta_{2}-1)(a-1)-2a+1-(\theta_{\infty}-1)a\big]-\kappa_{1}^{2}(a-1)\nonumber\\
&\ \ \ \ \ \ \ \ \ \ -\kappa_{1}\kappa_{2}a-\kappa_{1}\big[a\theta_{0}+(a-1)\theta_{1}\big]+\theta_{0}\theta_{2}(a-1)+\theta_{1}\theta_{2}a+O(x-a).
\end{align}
This, together with $\theta_{\infty}\neq 0$, implies that
\begin{align*}
M(x)&=(1-\theta_{\infty}^{-1})x(x-1)\frac{\mathrm{d}}{\mathrm{d}x}\log\tau(x)-\kappa_{1}a-\theta_{\infty}^{-1}\Big\{\kappa_{1}^{2}(a-1)+\kappa_{1}\kappa_{2}a+\kappa_{1}\big[a\theta_{0}\\
&\ \ \ \ +(a-1)\theta_{1}\big]\Big\}-(1-\theta_{\infty}^{-1})\big[\theta_{0}\theta_{2}(a-1)+\theta_{1}\theta_{2}a\big]+O(x-a).
\end{align*}
Taking $x\rightarrow a$ in \eqref{Phi1P6}, we obtain the HE
\begin{equation}\label{HE2}
\left\{
\begin{aligned}
&\frac{\mathrm{d}^{2}\Phi_{1}}{\mathrm{d}z^{2}}+\left(\frac{1-\theta_{0}}{z}+\frac{1-\theta_{1}}{z-1}+\frac{1-\theta_{2}}{z-a}\right)\frac{\mathrm{d}\Phi_{1}}{\mathrm{d}z}
+\frac{pz-q}{z(z-1)(z-a)}\Phi_{1}=0,\\
&p=\frac{1}{4}(\theta_{0}+\theta_{1}+\theta_{2}-\theta_{\infty})(\theta_{0}+\theta_{1}+\theta_{2}+\theta_{\infty}-4),\\
&q=\lim_{x\rightarrow a}(1-\theta_{\infty}^{-1})x(x-1)\frac{\mathrm{d}}{\mathrm{d}x}\log\tau(x)+\kappa_{1}(\kappa_{2}+2)a-\theta_{\infty}^{-1}\Big\{\kappa_{1}^{2}(a-1)\\
&\ \ \ \ \ +\kappa_{1}\kappa_{2}a+\kappa_{1}\big[a\theta_{0}+(a-1)\theta_{1}\big]\Big\}-(1-\theta_{\infty}^{-1})\big[\theta_{0}\theta_{2}(a-1)+\theta_{1}\theta_{2}a\big],
\end{aligned}
\right.
\end{equation}
where $\kappa_1$ and $\kappa_2$ are given in \eqref{ki}.

\subsection*{Case 2:   $a$ is a simple pole with residue $\varepsilon=\varepsilon_{+}$ and $\theta_{\infty}=0$ }
When $\theta_{\infty}=0$, it is seen from \eqref{behM} and \eqref{behtau} that
\begin{align*}
&M(x)=x(x-1)\frac{\mathrm{d}}{\mathrm{d}x}\log\tau(x)-b+\frac{1}{2}\big[-\theta_{0}+(\theta_{2}-1)(a-1)-a\big]-\kappa_{1}a\\
&\ \ \ \ \ \ \ \ \ \ -\theta_{0}\theta_{2}(a-1)-\theta_{1}\theta_{2}a+O(x-a),
\end{align*}
and
\begin{align*}
x(x-1)\frac{\mathrm{d}}{\mathrm{d}x}\log\tau(x)=&-\kappa_{1}^{2}(a-1)-\kappa_{1}\kappa_{2}a-\kappa_{1}\big[a\theta_{0}+(a-1)\theta_{1}\big]\\
& +\theta_{0}\theta_{2}(a-1)+\theta_{1}\theta_{2}a+O(x-a).
\end{align*}
It is straightforward that the free parameter $b$ is independent of $\tau(x)$. To express the accessory parameter in terms of the $\tau$-function as \eqref{HE2}, we need the Schlesinger transformation used by Dubrovin and Kapaev to shift the formal monodromy at infinity $\theta_{\infty}$ by $-2$, i.e. $\theta_{\infty}\mapsto \theta_{\infty}-2$; see \cite[Sec. 2.3.2]{DK}. As a consequence, we obtain the HE
$$
\left\{
\begin{aligned}
&\frac{\mathrm{d}^{2}\Phi_{1}}{\mathrm{d}z^{2}}+\left(\frac{1-\theta_{0}}{z}+\frac{1-\theta_{1}}{z-1}+\frac{1-\theta_{2}}{z-a}\right)\frac{\mathrm{d}\Phi_{1}}{\mathrm{d}z}+\frac{pz-q}{z(z-1)(z-a)}\Phi_{1}=0,\\
&p=\frac{1}{4}(\theta_{0}+\theta_{1}+\theta_{2}+2)(\theta_{0}+\theta_{1}+\theta_{2}-6),\\
&q=\lim_{x\rightarrow a}\frac{3}{2}x(x-1)\frac{\mathrm{d}}{\mathrm{d}x}\log\tilde{\tau}(x)+\tilde{\kappa}_{1}(\tilde{\kappa}_{2}+2)a+\frac{1}{2}\Big\{\tilde{\kappa}_{1}^{2}(a-1)\\
&\ \ \ \ \ +\tilde{\kappa}_{1}\tilde{\kappa}_{2}a+\tilde{\kappa}_{1}\big[a\theta_{0}+(a-1)\theta_{1}\big]\Big\}-\frac{3}{2}\big[\theta_{0}\theta_{2}(a-1)+\theta_{1}\theta_{2}a\big],
\end{aligned}
\right.
$$
where $\tilde{\tau}(x)$ is a new $\tau$-function defined by
\begin{align*}
x(x-1)\frac{\mathrm{d}}{\mathrm{d}x}\log\tilde{\tau}(x)=\ &y(y-1)(y-x)\bigg\{v^{2}-\bigg(\frac{\theta_{0}}{y}+\frac{\theta_{1}}{y-1}+\frac{\theta_{2}}{y-x}\bigg)v+\frac{\tilde{\kappa}_{1} \tilde{\kappa}_{2}}{y(y-1)}\bigg\}\\
                        & +\theta_{0}\theta_{2}(x-1)+\theta_{1}\theta_{2}x
\end{align*}
with
$$
\left\{
\begin{aligned}
&\tilde{\kappa}_{1}=-\frac{1}{2}\left(\theta_{0}+\theta_{1}+\theta_{2}+2\right),\\
&\tilde{\kappa}_{2}=-\frac{1}{2}\left(\theta_{0}+\theta_{1}+\theta_{2}-2\right).\\
\end{aligned}\right.
$$

\subsection*{Case 3:  $a$ is a simple pole with residue $\varepsilon=\varepsilon_{-}$}

From  \eqref{Phi1P6} and \eqref{yP6Expand},   it follows that
\begin{align*}
M(x)&=x(x-1)\frac{\mathrm{d}}{\mathrm{d}x}\log\tau(x)-\frac{a(a-1)}{x-a}+\frac{1}{2}(\theta_{0}+\theta_{1}+2\theta_{2}-4)a\\
&\ \ \ +1-\frac{1}{2}(\theta_{0}+\theta_{2})-\theta_{0}\theta_{2}(a-1)-\theta_{1}\theta_{2}a+O(x-a).
\end{align*}
By taking $x\rightarrow a$ in \eqref{Phi1P6}, we arrive at the HE
\begin{equation}\label{eq: P6-HE}
\left\{
\begin{aligned}
&\frac{\mathrm{d}^{2}\Phi_{1}}{\mathrm{d}z^{2}}+\left(\frac{1-\theta_{0}}{z}+\frac{1-\theta_{1}}{z-1}+\frac{1-\theta_{2}}{z-a}\right)\frac{\mathrm{d}\Phi_{1}}{\mathrm{d}z}+\frac{pz-q}{z(z-1)(z-a)}\Phi_{1}=0,\\
&p=\frac{1}{4}(\theta_{0}+\theta_{1}+\theta_{2}-\theta_{\infty}-2)(\theta_{0}+\theta_{1}+\theta_{2}+\theta_{\infty}-2),\\
&q=\lim_{x\rightarrow a}\bigg(x(x-1)\frac{\mathrm{d}}{\mathrm{d}x}\log\tau(x)-\frac{a(a-1)}{x-a}\bigg)+(\kappa_{1}+1)(\kappa_{2}+1)a\\
&\ \ \ \ \ +\frac{1}{2}(\theta_{0}+\theta_{1}+2\theta_{2}-4)a+1-\frac{1}{2}(\theta_{0}+\theta_{2})-\theta_{0}\theta_{2}(a-1)-\theta_{1}\theta_{2}a\\
&~~=-(1-\theta_{\infty})^2b-\frac{1}{4}
\Big(2(\theta_0+\theta_1)-(\theta_0+\theta_1)^2+\theta_2 ^2-(\theta_{\infty}-2)^2+4(\theta_{\infty}-1)\Big)a\\
&\qquad -\frac{1}{2}(\theta_0+\theta_2)+\theta_{\infty}-1+
\frac{1}{4}\Big((\theta_0+\theta_2)^2-\theta_1 ^2+(\theta_{\infty}-2)^2\Big),
\end{aligned}
\right.
\end{equation}
where $\kappa_1$ and $\kappa_2$ are given in \eqref{ki}.
\subsection*{Case 4: $a$ is a double pole }
We see from  \eqref{Phi1P6} and \eqref{yP6Expand} that
\begin{align*}
M(x)&=x(x-1)\frac{\mathrm{d}}{\mathrm{d}x}\log\tau(x)-\frac{a(a-1)}{x-a}-\frac{1}{2}(\theta_{0}+\theta_{1}-4)a-\frac{1}{2}(\theta_{0}+\theta_{2})\\
&\ \ \ -\theta_{0}\theta_{2}(a-1)-\theta_{1}\theta_{2}a+O(x-a).
\end{align*}
By taking $x\rightarrow a$ in \eqref{Phi1P6}, we arrive at the HE
\begin{equation}\label{HE:double-pole}
\left\{
\begin{aligned}
&\frac{\mathrm{d}^{2}\Phi_{1}}{\mathrm{d}z^{2}}+\left(\frac{1-\theta_{0}}{z}+\frac{1-\theta_{1}}{z-1}+\frac{1-\theta_{2}}{z-a}\right)\frac{\mathrm{d}\Phi_{1}}{\mathrm{d}z}+\frac{pz-q}{z(z-1)(z-a)}\Phi_{1}=0,\\
&p=\frac{1}{4}(\theta_{0}+\theta_{1}+\theta_{2}-1)(\theta_{0}+\theta_{1}+\theta_{2}-3),\\
&q=\lim_{x\rightarrow a}\bigg(x(x-1)\frac{\mathrm{d}}{\mathrm{d}x}\log\tau(x)-\frac{a(a-1)}{x-a}\bigg)+\kappa_{1}(\kappa_{2}+2)a\\
&\ \ \ \ \ -\frac{1}{2}(\theta_{0}+\theta_{1}-4)a-\frac{1}{2}(\theta_{0}+\theta_{2})-\theta_{0}\theta_{2}(a-1)-\theta_{1}\theta_{2}a\\
&~~=\frac{a^2(a-1)^2}{b}+\Big (\kappa_1\left(\kappa_2+\theta_2+3\right )+3\Big  ) a
+\Big  ( \kappa_1\left(\kappa_2+\theta_1+1\right )-1 \Big  ),
\end{aligned}
\right.
\end{equation}
where $\kappa_1$ and $\kappa_2$ are given in \eqref{ki}, with $\theta_\infty=1$ in this case.

It is noted that the above HEs are also obtained from the linear system at the poles of the solutions of PVI in \cite{DK}, while the accessory parameters in these equations are expressed in terms of the free parameter of the Laurent expansion of $y$ at the poles.

Finally, we  consider  the critical values  $y(x)=0$ and $y(x)=1$.
It follows from  \cite[(46.8)-(46.9)]{GLS} that $y$ admits the following Taylor expansions:
\begin{equation}\label{yP6Ser1}
y(x)=\left\{\begin{aligned}
  &\lambda(x-a)+b(x-a)^{2}+O((x-a)^{3}),\ \   & \beta\neq0,\\
  &b(x-a)^{2}+O((x-a)^{3}), \ \              & \beta=0,
\end{aligned} \right.
\end{equation}
\begin{equation}\label{yP6Ser2}
y(x)=\left\{\begin{aligned}
  &1+\omega(x-a)+b(x-a)^{2}+O((x-a)^{3}),\ \   & \gamma\neq0,\\
  &1+b(x-a)^{2}+O((x-a)^{3}), \ \   & \gamma=0,
\end{aligned} \right.
\end{equation}
where $b$ is arbitrary and
$$
\left\{
\begin{aligned}
&\lambda=\lambda_{\pm}=\pm \frac{\theta_{0}}{a-1},\\
&\omega=\omega_{\pm}=\pm \frac{\theta_{1}}{a}.
\end{aligned}
\right.
$$
It is seen from \eqref{Phi1P6}, \eqref{yP6Ser1} and \eqref{yP6Ser2} that
$$
M(x)=x(x-1)\frac{\mathrm{d}}{\mathrm{d}x}\log\tau(x)-\theta_{0}-\kappa_{1}a-\theta_{0}\theta_{2}(a-1)-\theta_{1}\theta_{2}a+O(x-a),
$$
for $\lambda=\lambda_{+}$,
$$
M(x)=x(x-1)\frac{\mathrm{d}}{\mathrm{d}x}\log\tau(x)-\kappa_{1}a-\theta_{0}\theta_{2}(a-1)-\theta_{1}\theta_{2}a+O(x-a),
$$
for $\lambda=\lambda_{-}$,
$$
M(x)=x(x-1)\frac{\mathrm{d}}{\mathrm{d}x}\log\tau(x)-\frac{\theta_{0}}{2}-\kappa_{1}a-\theta_{0}\theta_{2}(a-1)-\theta_{1}\theta_{2}a+O(x-a),
$$
for $\beta=0$,
$$
M(x)=x(x-1)\frac{\mathrm{d}}{\mathrm{d}x}\log\tau(x)+\kappa_{1}(1-a)-\theta_{0}\theta_{2}(a-1)-\theta_{1}\theta_{2}a+O(x-a),
$$
for $\omega=\omega_{+}$,
$$
M(x)=x(x-1)\frac{\mathrm{d}}{\mathrm{d}x}\log\tau(x)+\theta_{1}+\kappa_{1}(1-a)-\theta_{0}\theta_{2}(a-1)-\theta_{1}\theta_{2}a+O(x-a),
$$
for $\omega=\omega_{-}$,
$$
M(x)=x(x-1)\frac{\mathrm{d}}{\mathrm{d}x}\log\tau(x)+\frac{\theta_{1}}{2}+\kappa_{1}(1-a)-\theta_{0}\theta_{2}(a-1)-\theta_{1}\theta_{2}a+O(x-a),
$$
for $\gamma=0$.\\

Taking $x\rightarrow a$ in \eqref{Phi1P6}, we then obtain six  HEs. We show the exponent parameters at the singularities 0, 1, $a$ and $\infty $ in Table 3 and list the accessory parameters  below:
\begin{align*}
q=\lim_{x\rightarrow a}x(x-1)\frac{\mathrm{d}}{\mathrm{d}x}\log\tau(x)+\theta_{0}(\theta_{2}-1)+\left(\kappa_{1}\kappa_{2}+\theta_{0}
-\theta_{0}\theta_{1}-\theta_{1}\theta_{2}-2\theta_{0}\theta_{2}\right) a,
\end{align*}
for $\lambda=\lambda_{+}$,
$$
q=\lim_{x\rightarrow a}x(x-1)\frac{\mathrm{d}}{\mathrm{d}x}\log\tau(x)+\theta_{0}\theta_{2}+(\kappa_{1}\kappa_{2}-\theta_{0}\theta_{2}-\theta_{1}\theta_{2})a,
$$
for $\lambda=\lambda_{-}$,
$$
q=\lim_{x\rightarrow a}x(x-1)\frac{\mathrm{d}}{\mathrm{d}x}\log\tau(x)+(\kappa_{1}\kappa_{2}-\theta_{1}\theta_{2})a,
$$
for $\beta=0$,
$$
q=\lim_{x\rightarrow a}x(x-1)\frac{\mathrm{d}}{\mathrm{d}x}\log\tau(x)+\kappa_{1}+\theta_{0}\theta_{2}+(\kappa_{1}\kappa_{2}-\theta_{0}\theta_{2}-\theta_{1}\theta_{2})a,
$$
for $\omega=\omega_{+}$,
$$
q=\lim_{x\rightarrow a}x(x-1)\frac{\mathrm{d}}{\mathrm{d}x}\log\tau(x)+\theta_{1}+\theta_{0}\theta_{2}+\kappa_{1}+(\kappa_{1}\kappa_{2}+\theta_{1}-\theta_{0}\theta_{1}-\theta_{0}\theta_{2}-\theta_{1}\theta_{2})a,
$$
for $\omega=\omega_{-}$,
$$
q=\lim_{x\rightarrow a}x(x-1)\frac{\mathrm{d}}{\mathrm{d}x}\log\tau(x)+\kappa_{1}+\theta_{0}\theta_{2}+(\kappa_{1}\kappa_{2}-\theta_{0}\theta_{2})a,
$$
for $\gamma=0$.
\\

\begin{table}[h]
\centering
\begin{tabular}{|c|c|c|c|c|}
\hline
Case & 0 & 1 & $a$ & $\infty$ \\
\hline
$\lambda=\lambda_{+}$    & $\theta_{0}$         & $1-\theta_{1}$       & $1-\theta_{2}$       & $\frac{1}{4}(\theta_{0}-\theta_{1}-\theta_{2}+\theta_{\infty})(\theta_{0}-\theta_{1}-\theta_{2}-\theta_{\infty}+2)$ \\
\hline
$\lambda=\lambda_{-}$    & $-\theta_{0}$        & $1-\theta_{1}$       & $1-\theta_{2}$       & $\frac{1}{4}(\theta_{0}+\theta_{1}+\theta_{2}-\theta_{\infty})(\theta_{0}+\theta_{1}+\theta_{2}+\theta_{\infty}-2)$      \\
\hline
$\beta=0$                & $0$                  & $1-\theta_{1}$       & $1-\theta_{2}$       & $\frac{1}{4}(\theta_{1}+\theta_{2}-\theta_{\infty})(\theta_{1}+\theta_{2}+\theta_{\infty}-2)$      \\
\hline
$\omega=\omega_{+}$      & $1-\theta_{0}$       & $-\theta_{1}$        & $1-\theta_{2}$       & $\frac{1}{4}(\theta_{0}+\theta_{1}+\theta_{2}-\theta_{\infty})(\theta_{0}+\theta_{1}+\theta_{2}+\theta_{\infty}-2)$        \\
\hline
$\omega=\omega_{-}$      & $1-\theta_{0}$       & $\theta_{1}$        & $1-\theta_{2}$        & $\frac{1}{4}(\theta_{1}-\theta_{0}-\theta_{2}+\theta_{\infty})(\theta_{1}-\theta_{0}-\theta_{2}-\theta_{\infty}+2)$    \\
\hline
$\gamma=0$               & $1-\theta_{0}$       & $0$                  & $1-\theta_{2}$       & $\frac{1}{4}(\theta_{0}+\theta_{2}-\theta_{\infty})(\theta_{0}+\theta_{2}+\theta_{\infty}-2)$        \\
\hline
\end{tabular}
\caption{The characteristic exponents at the singularities of HEs}
\end{table}

We mentioned that,  in the cases $\lambda=\lambda_{+}$ and $\omega=\omega_{-}$, use has be made of the transformations $\Phi_{1}=z^{\theta_{0}}w$ and $\Phi_{1}=(z-1)^{\theta_{1}}w$ to get the canonical form of HE \eqref{int:HE}.
%

\subsection{Isomonodromy set of  accessory parameters of HE}
Consider the HE \eqref{int:HE} with  parameters $a$ and $q$.
There exist uniquely two linear independent solutions of  \eqref{int:HE}, namely $(y_1, y_2)$, satisfied the normalized asymptotic behavior as $z\to \infty$:
\begin{equation}\label{eq:HE-infty}\left(y_1(z), y_2(z)\right)\sim \left((1/z)^{\alpha}, (1/z)^{\beta}\right),\end{equation}
where $\alpha$ and $\beta$ are the characteristic exponents at $z=\infty$. We have the asymptotic behaviors of $(y_1, y_2)$ near the singular points $ z_0=0,z_1=1,z_2=a $
\begin{equation*}
 \left(y_1(z), y_2(z)\right) \sim \left((z-z_k)^{\theta_k}, (z-z_k)^{0}\right)E_k,~~k=0,1,2,
\end{equation*}
with the characteristic exponents $\theta_0=1-\gamma$,  $\theta_1=1-\delta$, $\theta_2=1-\epsilon$ and some invertible constant matrix $E_k$, $k=0,1,2$. Under an
 analytic continuation along a closed loop around a singular point,  we obtain another two linear independent solutions of the same equation, which  are therefore related to $(y_1, y_2)$ by
  \begin{equation*}
  \left(y_1\left(z_k+e^{2\pi i}(z-z_k)\right),~ y_2\left(z_k+e^{2\pi i}(z-z_k)\right)\right)=(y_1(z), y_2(z)) e^{\pi i\theta_k}M_k,
  \end{equation*}
\begin{equation*}
 \left(y_1(e^{2\pi i}z),~ y_2(e^{2\pi i}z)\right )=\left(y_1(z), y_2(z)\right) e^{-\pi i(\alpha+\beta)} M_{\infty}.
\end{equation*}
 Here the constant matrices  are known as the monodromy matrices  and determined by the connection matrices
  and the characteristic exponents
 $$  M_{k}=E_k^{-1}e^{\pi i\theta_k\sigma_3} E_k, \quad  M_{\infty}=e^{\pi i\theta_{\infty}\sigma_3}, $$
 with $k=0,1,2$ and $\theta_{\infty}=-\alpha+\beta$.
The monodromy data of  HE  \eqref{int:HE} is then constituted by
\begin{equation}\label{eq:HEMon}\left\{e^{\pi i\theta_{\infty}},e^{\pi i\theta_0},e^{\pi i\theta_1},e^{\pi i\theta_2}; E_0, E_1, E_2\right\},\end{equation}
where the characteristic exponents are related to the fixed parameters in  \eqref{int:HE} by $\theta_0=1-\gamma$,  $\theta_1=1-\delta$, $\theta_2=1-\epsilon$  and $\theta_{\infty}=-\alpha+\beta$.
The monodromy matrices  satisfies the cyclic  condition
 $$M_{\infty}M_2M_1M_0=I.$$  According to \cite{Jim1}, the monodromy matrices can be written explicitly in terms of
the characteristic exponents  and the parameters
$$2\cos(\pi \sigma_{jk})=\Tr M_jM_k=\Tr M_kM_j$$
with $j,k=0,1,2$ and $j<k$. Moreover,  by the cyclic condition, only two of the parameters $\{\sigma_{01}, \sigma_{02}, \sigma_{12}\}$ are independent.   Therefore, we get the same number of independent parameters of the monodromy matrices as the  parameters  $a$ and $q$ in HE equation.

To study the isomonodromy deformation of the  HE equation, it is  convenient to consider the matrix Fuchsian system with
four regular singular points. As shown in Section \ref{sec: HE-red}, the isomonodromy deformation of  \eqref{eq:P6} is described by the PVI equation \eqref{CCP6}. Moreover,
let $\Phi(z,x)$ be the fundamental solution of \eqref{eq:P6} normalized at infinity, then $\Phi(z,x)$ has the asymptotic behaviors near the singular points:
 $$\Phi(z,x)=\Phi_0^{(k)}(x)(I+O(z-z_k))(z-z_k)^{\frac{1}{2}\theta_k}(z-z_k)^{\frac{1}{2}\theta_k\sigma_3} \hat{E}_{k},$$
 and
 $$\Phi(z,x)=(I+O(1/z))z^{\frac{\alpha+\beta}{2}}z^{-\frac{1}{2}\theta_{\infty}\sigma_3},$$
 with $z_0=0$, $z_1=1$, $z_2=x$.  Here the  connection matrix $\hat{E}_{k}$ are certain invertible constant matrices.
Similarly, the  analytic continuation of  $\Phi(z,x)$  along a closed loop around the singular points are related to  $\Phi(z,x)$  by the monodromy matrices
$$\hat{M}_{\infty}=e^{\pi i\theta_{\infty}\sigma_3}, \quad \hat{M}_k=\hat{E}_{k} ^{-1}e^{\pi i \theta_k\sigma_3}\hat{E}_{k},$$
with $k=0,1,2$. While  the HE  can be obtained from a family of isomonodromy deformation system by taking certain limit procedure at the poles of the solution of PVI equation; see for instance \eqref{eq: P6-HE}.  In this way, we obtain a family of accessory parameters sharing the same monodromy data as stated in the following theorem.

\begin{thm}\label{thm:IsoSetHE}
There is a discrete set of pairs of accessory parameters $(a_n,q_n)$  such that  the HE \eqref{int:HE}
corresponding to these parameters has the same monodromy data \eqref{eq:HEMon} as the original
one with the parameters $a$ and $q$. Under the  bijection given in the last equation of  \eqref{eq: P6-HE}, and that of \eqref{HE:double-pole},  this set coincides with the set of parameters $(a_n, b_n)$ in the  Laurent expansion \eqref{yP6Expand} near the poles $a_n$ of the unique solution  of PVI equation \eqref{eq:yP6} with the same monodromy data \eqref{eq:HEMon} as the  HE.
\end{thm}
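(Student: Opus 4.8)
The plan is to follow the template of the proof of Theorem~\ref{thm:IsoSetRTHE}, with the Lax pair for PI replaced by the Fuchsian system \eqref{eq:P6} for PVI and the limiting reduction of Section~\ref{sec: PIRTHE} replaced by the one worked out in Section~\ref{sec: HE-red}. Starting from the HE \eqref{int:HE} with prescribed accessory parameters $(a,q)$ and fixed $\gamma,\delta,\epsilon$, I would fix the fundamental solution $(y_1,y_2)$ normalized at $z=\infty$ by \eqref{eq:HE-infty}, read off the connection matrices $E_0,E_1,E_2$ at $z=0,1,a$, and hence form the monodromy data \eqref{eq:HEMon}, with $\theta_0=1-\gamma$, $\theta_1=1-\delta$, $\theta_2=1-\epsilon$, $\theta_\infty=-\alpha+\beta$ and $M_\infty M_2M_1M_0=I$. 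By the analytic isomonodromy theory for PVI, i.e.\ the Riemann--Hilbert correspondence for the system \eqref{eq:P6} (see \cite{FIKN,Jim2,Jim1}), there is a unique PVI transcendent $y(x)$ solving \eqref{eq:yP6} whose associated normalized solution $\Phi(z,x)$ of \eqref{eq:P6} carries exactly this monodromy data, independently of $x$, with the same $\theta$-parameters.

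Next I would let $a$ run over the movable poles of this particular $y(x)$. At such a pole, $y(x)$ has the Laurent expansion \eqref{yP6Expand} with a single free parameter $b$: a simple pole with residue $\varepsilon=\varepsilon_-$ in the generic case $\alpha_0\neq0$, and a double pole when $\alpha_0=0$ (that is, $\theta_\infty=1$). Choosing $b$ so that $q$ equals the value $q(a,b)$ dictated by the bijection in the last line of \eqref{eq: P6-HE} (respectively of \eqref{HE:double-pole}), I would invoke the reduction of Section~\ref{sec: HE-red}: taking $x\to a$ in \eqref{Phi1P6}, and applying the auxiliary gauge transformations used there to reach the canonical form, produces precisely the HE \eqref{int:HE} with accessory parameters $(a,q)$ as the equation satisfied by the first row of $\lim_{x\to a}\Phi(z,x)$. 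In the exceptional case $\theta_\infty=0$ one first performs the Schlesinger shift $\theta_\infty\mapsto\theta_\infty-2$ of Dubrovin--Kapaev \cite{DK}, as in Case~2 of Section~\ref{sec: HE-red}, so that the free parameter enters the relevant $\tau$-function; all pole cases are covered there.

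It then remains to match the monodromy of the limiting HE with \eqref{eq:HEMon}. Since the monodromy data of $\Phi(z,x)$ is $x$-independent, and since the normalization \eqref{eq:HE-infty} at infinity together with the local normalizations at $z=0,1,a$ survive the confluence-free limit $x\to a$, the connection matrices $E_0,E_1,E_2$ --- hence the monodromy matrices $M_0,M_1,M_2,M_\infty$ --- of the limiting HE coincide with those of $\Phi(z,x)$, i.e.\ with \eqref{eq:HEMon}. Consequently every pole $a_n$ of $y(x)$, with its Laurent parameter $b_n$, yields a pair $(a_n,q_n)$, related by the stated bijection, for which the HE with parameters $(a_n,q_n)$ has monodromy data \eqref{eq:HEMon}. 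Conversely, given any HE \eqref{int:HE} with some parameters $(a',q')$ and monodromy \eqref{eq:HEMon}, one builds, by solving the Cauchy problem at a pole, the PVI transcendent having a movable pole at $a'$ with Laurent parameter $b'$ determined from $q'$ through the inverse of the bijection; by Section~\ref{sec: HE-red} its linear system reduces to this very HE and therefore carries the monodromy \eqref{eq:HEMon}, so by uniqueness it is the transcendent $y(x)$ fixed above and $a'$ is one of its poles. Hence the set of pairs $(a,q)$ with monodromy \eqref{eq:HEMon} coincides with $\{(a_n,q_n)\}$; since a PVI transcendent possesses infinitely many poles, discrete in its domain, this set is discrete, which finishes the argument.

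The hard part, already more delicate than in the RTHE case, is the matching step: one must control precisely how the auxiliary gauge transformations ($\Phi_1=z^{\theta_0}w$, $\Phi_1=(z-1)^{\theta_1}w$) and, when $\theta_\infty=0$, the Dubrovin--Kapaev Schlesinger transformation act on the connection data, so that the limiting equation is genuinely in the canonical form \eqref{int:HE} with unchanged monodromy and the correspondence $(a_n,q_n)\leftrightarrow(a_n,b_n)$ is exactly the one recorded in \eqref{eq: P6-HE} and \eqref{HE:double-pole} rather than a twisted version of it. Making this rigorous requires carefully tracking the asymptotic behaviour \eqref{eq:HE-infty} at infinity and the local exponents at $z=0,1,a$ through the limit $x\to a$, which is the content of the computations in Section~\ref{sec: HE-red}.
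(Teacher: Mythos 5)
Your proposal is correct and follows essentially the same route as the paper's proof: fix the monodromy data of the given HE, invoke the unique PVI transcendent (via the isomonodromic system \eqref{eq:P6}) carrying that data, run over its poles using the reduction of Section \ref{sec: HE-red}, and conclude from the $x$-independence of the monodromy and the discreteness of the poles. The one minor slip is that the degenerate case for the bijection in \eqref{eq: P6-HE} is $\theta_{\infty}=1$ (where the coefficient $-(1-\theta_{\infty})^2$ of $b$ vanishes and one switches to the double-pole formula \eqref{HE:double-pole}), not $\theta_{\infty}=0$; the Dubrovin--Kapaev Schlesinger shift you mention belongs to Case 2 ($\varepsilon=\varepsilon_{+}$) and is not needed for the bijections the theorem actually uses.
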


\begin{proof}
Consider the HE \eqref{int:HE} with given accessory parameters $a$
and $q$. There exist unique fundamental solutions $(y_1, y_2)$ to this equation with the normalized behavior at infinity \eqref{eq:HE-infty}. The solutions are corresponding to the monodromy data  \eqref{eq:HEMon} as mentioned at the beginning of this section. On the other side, there is a unique solution $\Phi(z,x)$ normalized at infinity  of the system  \eqref{eq:P6} with the parameters related to the fixed parameters of  HE \eqref{int:HE} by $\theta_0=1-\gamma$,  $\theta_1=1-\delta$, $\theta_2=1-\epsilon$  and $\theta_{\infty}=-\alpha+\beta$. Here the $x$-dependence of $\Phi(z,x)$ is described by the unique solution of the PVI equation with
the Laurent expansion as given in \eqref{yP6Expand} with the parameter $b$ given by the last equation of  \eqref{eq: P6-HE}  and the parameter $\theta_{\infty} \neq1$ therein.
When $\theta_{\infty}=1$, we consider the  Laurent expansion with a double pole at $a$ in \eqref{yP6Expand}. The solution $\Phi(z, x)$ is corresponding to  the monodromy data of the form $\left\{e^{\pi i\theta_{\infty}}, e^{\pi i\theta_0},e^{\pi i\theta_1},e^{\pi i\theta_2}; \hat{E}_0, \hat{E}_1, \hat{E}_2\right\}$ 
which is independent of $x$.

Using the limiting procedure shown in \eqref{eq: P6-HE},  we obtain the HE  \eqref{int:HE} with accessory parameters $a$  and $q$ as limit of the first row of the isomonodromy family $\Phi(z;x)$ as $x\to a$.  It follows from the $x$-independent of the monodromy data that $E_k=\hat{E}_k$, $k=0,1,2$. Thus, we have shown that any given accessory parameters $(a,q)$ is related by the last equation of  \eqref{eq: P6-HE} to the pole parameters $(a,b)$ of the unique solution of PVI corresponding to the same monodromy data \eqref{eq:HEMon} when  the parameter $\theta_{\infty} \neq1$. 
In the case $\theta_{\infty}=1$, we consider \eqref{HE:double-pole} and similar analysis applies.
By the meromorphic property of the PVI solution, the poles of each PVI solution are discrete and hence the set of pairs of accessory parameters of HE sharing the same monodromy data. We complete the proof of
Theorem \ref{thm:IsoSetHE}.
\end{proof}


\section{Asymptotics of the accessory parameters}\label{Asyofq}
In this section, we will derive some asymptotic approximations for the
 isomonodromy sets of accessory parameters  of some Heun class equations expressed in terms of the monodromy data. The derivation are based on  the connection between the accessory parameters and the
 the Laurent or Taylor coefficients for the corresponding Painlev\'e functions obtained in the previous sections and the  asymptotic expansions for the Painlev\'{e} transcendents known in the literature.
The equations of Heun class we considered in the section include the RBHE, CHE, and DHE.

\subsection{Asymptotics of the accessory parameter of RBHE}
  Consider the RBHE \eqref{int:RBHE} with the accessory parameters $(a_n,q_n)$  such that the equation
corresponding to these parameters has the monodromy data \eqref{eq:RBHEMonDat} specified by
 \begin{equation}\label{eq:RBHEMonSpe}
s_{-1}=-e^{-2\alpha\pi i}, \ \ \ \ \  s_{0}=e^{-2\beta\pi i}, \ \ \ \ \ s_{1}=-e^{2\alpha\pi i},
\end{equation}
with $i\beta\in \mathbb{R}$ and $\alpha>-1/2$.
 The corresponding Painlev\'e XXXIV transcendents and their associated Lax pair  play important roles in  random matrix theory when a Fisher-Hartwig singularity located at an interior point where the density of the equilibrium density vanishes quadratically \cite{CK}, or at  the right edge of the spectrum where typically the density vanishes like a square root \cite{ik, WXZ}. The asymptotics of the Painlev\'e XXXIV transcendents and their associated $\tau$-functions have been  worked out in \cite{DXZ,ik,WXZ}. Using these asympotics and the relation between the isomonodromy set of accessory parameters and the corresponding Painlev\'e XXXIV transcendents  given in Theorem \ref{thm:IsoSetRBHE}, we derive the asymptotic behavior of the accessory parameters $(a_n,q_n)$ expressed in terms of the given monodromy data
\eqref{eq:RBHEMonSpe}.

{\thm\label{thm:AsyRBHE}{ Let $(a_n,q_n)$ be the sequence of  accessory parameters  such that the RBHE \eqref{int:RBHE} corresponding to these parameters has the same monodromy data \eqref{eq:RBHEMonSpe}, then we have the asymptotic approximations as $n\to\infty$
\begin{align}\label{thm:zero2P34Asy}
\frac{4}{3}|a_{n}|^{\frac{3}{2}}= 2 n\pi+2i\beta\ln 3n\pi+4i\beta\ln2-2\arg\Gamma(\alpha-\beta)+\frac{1}{2}(2\alpha+1)\pi+O\left(\frac{\ln n}{n}\right),
\end{align}
and
\begin{equation}\label{thm:qP34Asy}
q(a_{n})= 2\beta i|a_{n}|^{\frac{1}{2}}-\frac{1}{2}\left(\alpha-\alpha^{2}+3\beta^2\right)
a_{n}^{-1}+O\left(|a_{n}|^{-\frac{5}{2}}\right).
\end{equation}
}}
\begin{proof}
 According to  \cite[Theorem 2]{WXZ} and  \cite[Proposition 3.4]{DXZ},  we have the asymptotic behaviors for the Painlev\'e XXXIV transcendents $y(x)$ and the associated $\frac{\mathrm{d}}{\mathrm{d}x}\log\tau(x)$, corresponding to the  Stokes multipliers \eqref{eq:RBHEMonSpe}, as $x\rightarrow-\infty$:
\begin{equation}\label{yP34Asy}
\begin{aligned}
y(x)&=\frac{2|\alpha-\beta|}{\sqrt{|x|}}\cos\left(\frac{\theta(x)}{2}+\arg\Gamma(1+\alpha-\beta)-\frac{\pi}{4}\right)\\
&\quad  \times\cos\left(\frac{\theta(x)}{2}+\arg\Gamma(\alpha-\beta)+\frac{\pi}{4}\right)+O(x^{-2}),
\end{aligned}
\end{equation}
and
\begin{align}\label{TauP34Asy}
\frac{\mathrm{d}}{\mathrm{d}x}\log\tau(x)&= 2\beta i|x|^{\frac{1}{2}}-\frac{|\alpha-\beta|}{2x}\sin\left(\theta(x)+2\arg\Gamma(\alpha-\beta)+\arg(\alpha-\beta)\right)\nonumber \\
&\quad  +\frac{\alpha^{2}-3\beta^{2}}{2x}+O(|x|^{-\frac{5}{2}}),
\end{align}
where
$$
\theta(x)=\frac{4}{3}|x|^{\frac{3}{2}}-3i\beta\ln |x|-\alpha\pi-6i\beta\ln2,
$$
and $\beta\in i\mathbb{R}$.

It follows from \eqref{yP34Asy} that $y(x)$ admits a sequences of simple zeros  lying on the negative real axis and tending to infinity  with the asymptotic approximation given in \eqref{thm:zero2P34Asy}.
Moreover, the leading coefficient of the Taylor expansion of $y(x)$ near the zero $a_n$ is $ 2\alpha$.
The relation in the second equation of \eqref{RBHE2}, together with  \eqref{thm:zero2P34Asy} and \eqref{TauP34Asy}, then implies \eqref{thm:qP34Asy}.
This completes the proof of Theorem \ref{thm:AsyRBHE}.

\end{proof}

\subsection{Asymptotics of the accessory parameter of CHE}

 In this subsection, we consider the CHE  \eqref{eq:CHE} with the accessory parameters $(a_n,q_n)$  such that the equation corresponding to these parameters has the monodromy data \eqref{eq:CHEMonDat} parameterized in terms of $\sigma$ and $s$ as given in \eqref{eq:CHE-E1}-\eqref{eq:CHE-E0}.
 In the seminal work \cite{Jim1}, Jimbo  derived the  asymptotic expansion for the PV tanscendents and the associated  $\tau$-function corresponding to the monodromy data.
 Using these asympotics and the relation between the isomonodromy set of accessory parameters and the corresponding Painlev\'e V  transcendents  given in Theorem  \ref{thm:IsoSetCHE}, we derive the asymptotic behavior of the accessory parameters $(a_n,q_n)$ expressed in terms of the given monodromy data.

{\thm\label{thm:AsyCHE}{ Let $(a_n,q_n)$ be the sequence of  accessory parameters  such that the CHE   \eqref{eq:CHE}
corresponding to these parameters has the same monodromy data  \eqref{eq:CHEMonDat} parameterized in terms of $\sigma$ and $s$ as given in \eqref{eq:CHE-E1}-\eqref{eq:CHE-E0}, then we have the asymptotic approximations as $n\to\infty$
\begin{equation}\label{thm:P5polean}
 \ln |a_n| \sim \left( \Re \sigma \ln|c_0| +\Im \sigma \arg c_0  -2\pi n |\Im \sigma|\right )/|\sigma|^2,  \end{equation}
and
\begin{equation}\label{thm:qP5Asy1}
q(a_{n})= \frac{\sigma^{2}-(\theta_{0}+\theta_{1})^{2}}{4}-\bigg[\frac{(\theta_{\infty}-1)(\theta_{1}^{2}-(\theta_{0}-1)^{2})}{4(\sigma^{2}-1)}
+\frac{1-2\theta_{0}-\theta_{\infty}}{4}\bigg]a_{n}+O(a_{n}^2),
\end{equation}
where
$c_0= \frac{(\sigma+\theta_{\infty})(\sigma+\theta_{0}+\theta_{1})\Gamma(1+\sigma)^2\Gamma(\frac{1}{2}(\theta_1+\theta_0-\sigma)+1)\Gamma(\frac{1}{2}(\theta_1-\theta_0-\sigma)+1)\Gamma(\frac{1}{2}(\theta_{\infty}-\sigma)+1)}{(\sigma-\theta_{\infty})(\sigma-\theta_{0}-\theta_{1})\Gamma(1-\sigma)^2\Gamma(\frac{1}{2}(\theta_1+\theta_0+\sigma)+1)\Gamma(\frac{1}{2}(\theta_1-\theta_0+\sigma)+1)\Gamma(\frac{1}{2}(\theta_{\infty}+\sigma)+1)}\frac{1}{s}
$
with $\Im \sigma \neq0$ and $s\neq0$.
}}
\begin{proof}
According to \cite{Jim1}, we have the following asymptotic expansion  for the $\tau$-function of PV  as $x\rightarrow0$
\begin{align}\label{TauP5Asy}
\tau(x)\sim\ &\text{const.}\ x^{\frac{1}{4}(\sigma^{2}-\theta_{\infty}^{2})}\nonumber\\
&\times\Bigg\{1-\frac{\theta_{\infty}\left(\sigma^{2}+\theta_{1}^{2}-\theta_{0}^{2}\right)}{4 \sigma^{2}}x -\frac{\left(\sigma-\theta_{\infty}\right)\left[\left(\sigma-\theta_{1}\right)^{2}-\theta_{0}^{2}\right]}{8 \sigma^{2}(1+\sigma)^{2}} x(\rho x^{\sigma})\nonumber\\
&+\frac{\left(\sigma+\theta_{\infty}\right)\left[\left(\sigma+\theta_{1}\right)^{2}-\theta_{0}^{2}\right]}{8 \sigma^{2}(1-\sigma)^{2}} x(\rho x^{\sigma})^{-1}+\sum_{j=2}^{\infty}x^j\sum_{k=-j}^{j}c_{jk}x^{k\sigma}\Bigg\},
\end{align}
\begin{equation}\label{rho}
\rho=\frac{\Gamma(1-\sigma)^2\Gamma(\frac{1}{2}(\theta_1+\theta_0+\sigma)+1)\Gamma(\frac{1}{2}(\theta_1-\theta_0+\sigma)+1)\Gamma(\frac{1}{2}(\theta_{\infty}+\sigma)+1)}{\Gamma(1+\sigma)^2\Gamma(\frac{1}{2}(\theta_1+\theta_0-\sigma)+1)\Gamma(\frac{1}{2}(\theta_1-\theta_0-\sigma)+1)\Gamma(\frac{1}{2}(\theta_{\infty}-\sigma)+1)}s.
\end{equation}
Here  the parameters  $\sigma\neq0$ and $s$ are the parameterization of the monodromy data of the corresponding  CHE
as given in  \eqref{eq:CHE-sigama}-\eqref{eq:CHE-E0}.  

As mentioned in  \cite[Remark 1]{Jim1}, the asymptotics of the solution $y$ of PV can also be obtained from the asymptotic analysis carried out therein. The small-$x$ asymptotic expansion  for $y$ was also derived in \cite{Shun1}.  For
$\Im \sigma\neq0$, we see from the asymptotics of  the PV tanscendents $y$ that it admits a sequence of simple poles $\{a_{n}\}_{n\in\mathbb{N}}$ such that
\begin{equation}\label{an1P5}
a_{n}^{\sigma}\sim c_0, \quad c_0= \frac{(\sigma+\theta_{\infty})(\sigma+\theta_{0}+\theta_{1})}{(\sigma-\theta_{\infty})(\sigma-\theta_{0}-\theta_{1})} \rho^{-1},  \quad \mbox {as}\quad a_{n}\rightarrow 0.
\end{equation}
The sequence of poles are  clustering at zero along the spiral described by
\begin{equation}\label{eq:speral1}
|\sigma|^2 \ln |a_n| - \Re \sigma \ln|c_0| -\Im \sigma \arg c_0 \sim -2\pi n |\Im \sigma|,  \end{equation}
and
\begin{equation}\label{eq:speral2}
|\sigma|^2 \arg a_n -\Re \sigma  \arg c_0+\Im \sigma \ln |c_0|  \sim 2\pi n \Re \sigma; \end{equation}
see for instance  \cite[Theorem 2.8]{Shun1}.
It can be checked that the residues of $y$ at $a_{n}$  equal  $\varepsilon_{+}$  as given in \eqref{coeffP5}.

Thus, from  \eqref{rho} and \eqref{eq:speral1} we have \eqref{thm:P5polean}.
Substituting \eqref{TauP5Asy}, \eqref{an1P5} into \eqref{qP5-1}, we obtain
\eqref{thm:qP5Asy1}.  This completes the proof of Theorem \ref{thm:AsyCHE}.

 \end{proof}

It should be mentioned that similar formulae for the asymptotic approximations of the accessory parameters as $a\to 0$ have also been derived in \cite{CC} by considering the zeros of a special $\tau$-function of PV, with applications in black holes; see \cite[(2.62a)-(2.62c)]{CC}.
In general, the asymptotic expansions of the PV solutions $y(x)$ and the associated $\tau$-functions   near infinity are rather complicated. There exists no general asymptotic expansions for $y(x)$  or $\tau(x)$ as $x\rightarrow \infty$ except along some special rays: $\mathrm{arg}x=0,\pi/2,\pi,3\pi/2$; see \cite{AK, LNR, Shun2}.
In \cite{AK,Shun2}, the asymptotics for $y(x)$ and the logarithmic derivative of  the $\tau$-functions as $x\rightarrow i\infty$ are established.  From the asymptotic expansions, it is shown in \cite{Shun2} that under certain  conditions the PV solutions  admit  sequences of  poles and of  zeros lying on the imaginary axis and tending to $i\infty$.
Combining these results with the expressions of the  accessory parameters of CHE in terms of the $\tau$-function given in \eqref{qP5-1}-\eqref{qP5-2}, the asymptotic approximations of the isomonodromic set of accessory parameters $(a_n,q_n)$ corresponding to some special monodromy data may also be obtained.

\subsection{Asymptotics of the accessory parameter of DHE}
In the pioneering work of  McCoy, Tarcy and Wu \cite{MTW}, the asymptotics and the connection formulae for one-parameter family of solutions to PIII were derived rigorously. These  solutions have important applications in the  analysis of  two-dimensional Ising model \cite{WT}. More precisely, they showed in \cite{MTW} that there are one-parameter family of solutions of PIII with the asymptotics
\begin{equation}\label{yP3AsyInfty}
y(x; \nu,\lambda)\sim 1-\lambda\Gamma\left(\nu+\frac{1}{2}\right)2^{-2\nu}x^{-\nu-\frac{1}{2}}e^{-2x}\Big(1+\sum^{\infty}\limits_{j=1}\frac{c_{j}}{x^{j}}\Big),\ \ \ \ \ x\rightarrow +\infty,
\end{equation}
where  parameters $\theta_{0}$, $\theta_{\infty}$ satisfy the relation
$$
\theta_{0}=\theta_{\infty}-1=\nu.
$$
For $|\lambda|<1/\pi$,   the asymptotic behavior of $y(x;\nu,\lambda)$ as $x\rightarrow 0^{+}$ is  described by
$$ y(x;\nu,\lambda)\sim  B(2x)^{\sigma} $$
with the parameters $\sigma$ and $B$ are given as explicit functions of $\lambda$ and $\nu$, which are now known as the connection formulae.
When $\lambda>1/\pi$ and the parameter $\nu=0$, the asymptotic behavior of $y(x)$ as $x\rightarrow 0^{+}$ was also derived in \cite{MTW}
\begin{equation}\label{yP3Asy}
y(x;0,\lambda)= \frac{x}{2\mu}\sin\left\{2\mu\ln \frac{x}{4}-2\arg\Gamma(i\mu)\right\}+O(x^{3}),\ \ \ \ x\rightarrow 0^{+},
\end{equation}
where
$$\lambda=\frac{1}{\pi}\cosh(\pi\mu),\quad \mu>0.$$
For  $\lambda<-1/\pi$, the asymptotics of $y(x)$ as $x\rightarrow 0^{+}$ follows from \eqref{yP3Asy} and  the symmetry relation \cite[(4.127)]{MTW}
\begin{equation}\label{symP3}y(x; \nu,\lambda)=\frac{1}{y(x; \nu,-\lambda)}.\end{equation}
The asymptotic formula for general parameter $\nu\in\mathbb{R}$ was worked out in \cite{FFW}.
When  $\lambda>1/\pi$ and $\nu=0$, it is readily seen from \eqref{yP3Asy} that $y(x; 0, \lambda)$ admits a sequence of zeros $\{c_{n}\}_{n\in \mathbb{N}}$ lying on the positive real axis with $x=0$ being a limiting point:
\begin{equation}\label{zeroP3Asy}
c_{n}\sim 4\exp\left\{-\frac{n\pi}{\mu}+\frac{\arg\Gamma(i\mu)}{\mu}\right\} \to 0^{+}, \ \ \ n\rightarrow\infty.
\end{equation}
Moreover, it is straightforward to check that the Taylor expansions of $y$ at the zeros $\{c_{2n}\}_{n\in \mathbb{N}}$ and $\{c_{2n+1}\}_{n\in \mathbb{N}}$ are corresponding to $\sigma=\sigma_{+}$ and $\sigma=\sigma_{-}$ in \eqref{PIIIExpandAtzero}, respectively.  It is also seen from the relation \eqref{symP3} that there are infinitely many poles of $y(x; 0, \lambda)$ clustering at $x=0$ on the  positive real axis when $\lambda<-1/\pi$.

Let us consider the DHE equation \eqref{int:DHE} with  the accessory parameters $a$ and $q$ and the fixed parameters
$\gamma=1+\theta_0$ and $p=\frac{1}{4}(\theta_{\infty}+\theta_{0})$.
Applying Theorem \ref{thm:IsoSetDHE},
there is an isomonodromy set of pairs of accessory parameters $(a_n,q_n)$  such that  the DHE equation
corresponding to these parameters have the same monodromy data as the Painlev\'e III transcendent $y(x;0,\lambda)$ determined by the asymptotic behavior \eqref{yP3Asy}. Moreover, the parameter $a_n=c_{2n}$ and the  accessory parameters  $q_n$ are expressed in terms of the Laurent parameters of the PIII transcendents as given in \eqref{DHE1}.
Combining  \eqref{tauP3y}, \eqref{DHE1},  \eqref{yP3Asy} and \eqref{zeroP3Asy}, we obtain the asymptotic approximation of  the accessory parameters as stated in the following theorem.

{\thm\label{thm:AsyDHE}{ Let $(a_n,q_n)$ be the sequence of  accessory parameters  such that  the DHE equation \eqref{int:DHE} corresponding to these parameters has the same monodromy data as the Painlev\'e III transcendents
$y(x;0,\lambda)$ determined by the asymptotic behavior \eqref{yP3Asy}, then we have the asymptotic approximations as $n\to\infty$
\begin{equation}\label{thm:P3polean}
 a_{n}\sim 4\exp\left\{-\frac{2n\pi}{\mu}+\frac{\arg\Gamma(i\mu)}{\mu}\right\} \to 0^{+},  \end{equation}
and
\begin{equation}\label{thm:qP3Asy1}
q(a_{n})=-\frac{\mu^{2}+1}{4}+O(a_{n}^{2}).\end{equation}
Here the parameter
$\lambda=\frac{1}{\pi}\cosh(\pi\mu) $ and $ \mu>0 $.
}}

\begin{rem}
In Theorem \ref{thm:AsyDHE}, we have derived the asymptotics of  a isomonodromy sequence of accessory parameters  for  DHE  \eqref{int:DHE}. The asymptotics are expressed in terms of the parameters in the behavior  \eqref{yP3Asy} of the corresponding Painlev\'e III transcendents. It would be desirable to describe the asymptotics via the monodomy data as  given in Theorem \ref{thm:AsyRBHE} and Theorem \ref{thm:AsyCHE}.   However, to the best of our knowledge the connection  between the parameters  in the asymptotic behavior \eqref{yP3Asy} for the  Painlev\'e III transcendents  and the  monodomy data has not been worked out in the literature. We expect that such a connection formula could be derived, perhaps  by using the Riemann-Hilbert method or Isomonodromy method \cite{FIKN}. This, together with  Theorem \ref{thm:AsyDHE}, would then give us a description of the  asymptotics of the accessory parameters for DHE via the monodomy data.
We will leave this problem to a  future consideration.
\end{rem}

\section*{Acknowledgements}
The authors  are very grateful to the anonymous reviewers for their constructive comments and suggestions. 
The work of Shuai-Xia Xu was supported in part by the National Natural Science Foundation of China under grant   numbers 11571376 and 11971492. Yu-Qiu Zhao was supported in part by the National Natural Science Foundation of China under grant numbers    11571375 and 11971489.

\end{document}